\newtheorem{theorem}{Theorem}[section]
\newtheorem{proposition}[theorem]{Proposition}
\newtheorem{lemma}[theorem]{Lemma}
\newtheorem{corollary}[theorem]{Corollary}
\numberwithin{equation}{section}
\numberwithin{figure}{section}
\newcommand{\N}{\mathbb{N}}
\newcommand{\Z}{\mathbb{Z}}
\newcommand{\R}{\mathbb{R}}
\begin{document}

\title{The Neumann boundary condition \\
for the two-dimensional Lax-Wendroff scheme}

\author{Antoine {\sc Benoit}\thanks{Email: {\tt antoine.benoit@univ-littoral.fr}. Research of the author was supported by 
ANR project NABUCO, ANR-17-CE40-0025.} $\,$ \& Jean-Fran\c{c}ois {\sc Coulombel}\thanks{Institut de Math\'ematiques de Toulouse ; 
UMR5219, Universit\'e de Toulouse ; CNRS, F-31062 Toulouse Cedex 9, France. Email: 
{\tt jean-francois.coulombel@math.univ-toulouse.fr}. Research of the author was supported by ANR project NABUCO, ANR-17-CE40-0025.}}
\date{\today}
\maketitle

\begin{abstract}
We study the stability of the two-dimensional Lax-Wendroff scheme with a stabilizer that approximates solutions to the transport equation. 
The problem is first analyzed in the whole space in order to show that the so-called energy method yields an optimal stability criterion for 
this finite difference scheme. We then deal with the case of a half-space when the transport operator is outgoing. At the numerical level, 
we enforce the Neumann extrapolation boundary condition and show that the corresponding scheme is stable. Eventually we analyze the 
case of a quarter-space when the transport operator is outgoing with respect to both sides. We then enforce the Neumann extrapolation 
boundary condition on each side of the boundary and propose an extrapolation boundary condition at the numerical corner in order to 
maintain stability for the whole numerical scheme.
\end{abstract}

\noindent {\small {\bf AMS classification:} 65M12, 65M06, 65M20.}

\noindent {\small {\bf Keywords:} transport equations, numerical schemes, domains with corners, boundary conditions, stability.}

\paragraph{Notation.} For $d$ a positive integer and $\mathcal{J} \subset \Z^d$, we let $\ell^2(\mathcal{J};\R)$ denote the Hilbert space of 
real valued, square integrable sequences indexed by $\mathcal{J}$ and equipped with the norm:
$$
\forall \, u \in \ell^2(\mathcal{J};\R) \, ,\quad \| \, u \, \|_{\ell^2(\mathcal{J})}^2 \, := \, \sum_{j \in \mathcal{J}} \, u_j^2 \, .
$$
The corresponding scalar product is denoted $\langle \, \, ; \, \, \rangle_{\ell^2(\mathcal{J})}$.

\section{Introduction}

We explore in this article the relevance of extrapolation boundary conditions for outgoing transport equations in \emph{two} space dimensions. 
In one space dimension, a general stability and convergence theory has been developed in \cite{jfcfl} following, among others, previous works 
by Kreiss and Goldberg \cite{kreissproc,goldberg}. The main results in \cite{kreissproc,goldberg,jfcfl} assert that, for an \emph{explicit}, 
\emph{one time step} finite difference scheme that is stable in $\ell^2(\Z;\R)$ and that is consistent with the transport equation:
$$
\partial_t \, v \, + \, a \, \partial_x \, v \, = \, 0 \, ,\quad a \, < \, 0 \, ,\quad (t,x) \in \R^+ \times \R \, ,
$$
then extrapolation numerical boundary conditions at the origin yield a numerical scheme that is stable in $\ell^2(\N;\R)$ (that is, on the half-line). 
The result is independent of the extrapolation order that is chosen at the boundary. We refer to \cite[Theorem 3.1]{jfcfl} for a detailed statement. 
We aim here at understanding the influence of \emph{tangential} directions on this stability result in higher space dimension.

Numerical boundary conditions for two-dimensional hyperbolic problems have been investigated, for instance, in \cite{ag2,sloan}, by the 
so-called normal mode analysis. This method is, to some extent, optimal to characterize stability but it usually leads to rather involved 
algebraic calculations that, sometimes, cannot be carried out. In this article, we rather wish to developed an \emph{energy method} in order 
to deal with more involved geometries as the quarter-space. We focus on a second-order discretization of the transport equation that was 
originally proposed by Lax and Wendroff \cite{laxwendroff}. This approximation does not rely on any dimensional splitting, which prevents 
us from using one-dimensional arguments, and it is second order accurate with a compact (nine point) stencil, which is a good compromise 
between efficiency and complexity. We aim at exploring finite difference schemes with wider stencils and develop a general stability theory 
in the future.

The plan of the article is as follows. Section \ref{section2} is devoted to the definition of the finite difference scheme in the whole space and to 
the stability analysis by means of the energy method without any boundary. Previous stability results for this numerical scheme, such as in the 
references \cite{laxwendroff,tadmor,jfc}, were relying on the Fourier transform, which is not convenient for half-space or quarter-space problems. 
In Section \ref{section2}, we recover the \emph{optimal} stability criterion for the Lax-Wendroff scheme \eqref{LW} below by only using elementary 
energy arguments (discrete integration by parts and Cauchy-Schwarz inequalities). As far as we know, even this part of our analysis is new. We 
then apply a similar strategy in Section \ref{section3} to deal with half-space problems. Our main result in this section is Theorem \ref{thm1} in 
which we prove that the first order extrapolation boundary condition (see \eqref{extrapolation} below) maintains stability for the corresponding 
numerical scheme in a half-space. We even recover a trace estimate for the solution, which is in agreement with the fulfillment of the Uniform 
Kreiss-Lopatinskii condition (see, e.g., \cite{gks,gko,michelson}). Eventually, we consider in Section \ref{section4} the quarter-space with a transport 
operator that is outgoing with respect to both sides of the boundary. In view of Theorem \ref{thm1}, we may expect that enforcing an extrapolation 
numerical boundary condition on each side is a good starting point for deriving a stable scheme. However, the extrapolation procedure on each 
side of the boundary still leaves one undetermined quantity at each time step, which is the value of the numerical solution at the \emph{corner} 
of the space domain. Applying our energy argument, we are able to propose an extrapolation numerical corner condition that maintains stability. 
Numerical evidence suggests that ``wrong'' corner conditions may yield a strongly unstable scheme.

\section{Stability for the Cauchy problem}
\label{section2}

\subsection{Definition of the numerical scheme}

We consider the two-dimensional transport equation on the whole space $\R^2$:
\begin{equation}
\label{hcl}
\begin{cases}
\partial_t u \, + \, a \, \partial_x u \, + \, b \, \partial_y u \, = \, 0 \, ,& t \ge 0 \, ,\, (x,y) \in \R^2 \, ,\\
u_{|_{t=0}} \, = \, u_0 \, ,& 
\end{cases}
\end{equation}
where $a,b$ are some given real numbers. We make no sign assumption on $a,b$ in this section. The initial condition $u_0$ in \eqref{hcl} belongs 
to the Lebesgue space $L^2(\R^2;\R)$. We consider below a finite difference approximation of \eqref{hcl} that is defined as follows. Given some 
space steps $\Delta x, \Delta y >0$ in each spatial direction, and given a time step $\Delta t>0$, we introduce the ratios $\lambda := \Delta t/\Delta x$ 
and $\mu :=\Delta t/\Delta y$. In all what follows, the ratios $\lambda$ and $\mu$ are assumed to be fixed, meaning that they are given a priori of the 
computation and are meant to be tuned in order to satisfy some stability requirements (the so-called Courant-Friedrichs-Lewy condition \cite{cfl}, later 
on referred to as the CFL condition, see for instance Corollary \ref{cor1} below). The solution $u$ to \eqref{hcl} is then approximated on the time-space 
domain $[n \, \Delta t,(n+1) \, \Delta t) \times [(j-1/2) \, \Delta x,(j+1/2) \, \Delta x) \times [(k-1/2) \, \Delta y,(k+1/2) \, \Delta y)$ by a real number $u_{j,k}^n$ 
for any $n \in \N$ and $(j,k) \in \Z^2$. The discrete initial condition $u^0$ is defined for instance by taking the piecewise constant projection of $u_0$ in 
\eqref{hcl} on each cell, that is (see \cite{gko}):
$$
\forall \, (j,k) \in \Z^2 \, ,\quad u^0_{j,k} \, := \, \dfrac{1}{\Delta x \, \Delta y} \, \int_{(j-1/2) \, \Delta x}^{(j+1/2) \, \Delta x} \, 
\int_{(k-1/2) \, \Delta y}^{(k+1/2) \, \Delta y} \, u_0(x,y) \, {\rm d}x \, {\rm d}y \, .
$$
This initial condition satisfies:
$$
\Delta x \, \Delta y \, \| \, u^0 \, \|_{\ell^2(\Z^2)}^2 \, = \, \sum_{(j,k) \in \Z^2} \, \Delta x \, \Delta y \, (u^0_{j,k})^2 \, \le \, \| \, u_0 \, \|_{L^2(\R^2)}^2 \, .
$$
It then remains to determine the $u_{j,k}^n$'s inductively with respect to $n$. The Lax-Wendroff scheme with a stabilizer reads (see \cite{laxwendroff}):
\begin{align}
u_{j,k}^{n+1} \, = \, u_{j,k}^n 
& - \, \dfrac{\lambda \, a}{2} \, \big( u_{j+1,k}^n-u_{j-1,k}^n \big) \, - \, \dfrac{\mu \, b}{2} \, \big( u_{j,k+1}^n-u_{j,k-1}^n \big) \notag \\
& + \, \dfrac{(\lambda \, a)^2}{2} \, \big( u_{j+1,k}^n-2\, u_{j,k}^n+u_{j-1,k}^n \big) \, + \, \dfrac{(\mu \, b)^2}{2} \, \big( u_{j,k+1}^n-2\, u_{j,k}^n+u_{j,k-1}^n \big) \notag \\
& + \, \dfrac{\lambda \, a \, \mu \, b}{4} \, \big( u_{j+1,k+1}^n-u_{j+1,k-1}^n-u_{j-1,k+1}^n+u_{j-1,k-1}^n \big) \label{LW} \\
& - \, \dfrac{(\lambda \, a)^2 + (\mu \, b)^2}{8} \, \big( u_{j+1,k+1}^n-2\, u_{j+1,k}^n+u_{j+1,k-1}^n \notag \\
& \qquad \qquad \qquad \qquad - \, 2\, u_{j,k+1}^n+4\, u_{j,k}^n-2\, u_{j,k-1}^n+u_{j-1,k+1}^n-2\, u_{j-1,k}^n+u_{j-1,k-1}^n \big) \, ,\notag
\end{align}
where $(j,k)$ belongs to $\Z^2$. The so-called stabilizing term corresponds to the last two lines on the right hand side of \eqref{LW}. This term is meant to 
add some (rather weak) dissipation that improves the stability properties of the finite difference approximation. We refer to \cite{laxwendroff,gko} for alternative 
approximations of \eqref{hcl}.

We first recall some stability results for the numerical scheme \eqref{LW} and then propose an energy method in order to recover the optimal stability 
criterion for \eqref{LW}. The relevance of the energy method is made more precise below in Sections \ref{section3} and \ref{section4} when we extend 
our approach to more involved geometries.

\subsection{A reminder on the Fourier approach}

The $\ell^2$ stability of the iteration \eqref{LW} was first analyzed in \cite{laxwendroff} (for symmetric hyperbolic systems) by means of the numerical 
radius of the amplification matrix. When one specifies the result of \cite{laxwendroff} to the scalar case, the main result of \cite{laxwendroff} shows 
that \eqref{LW} is stable in $\ell^2(\Z^2;\R)$, that is:
$$
\forall \, n \in \N \, ,\quad \| \, u^{n+1} \, \|_{\ell^2(\Z^2)} \, \le \, \| \, u^n \, \|_{\ell^2(\Z^2)} \, ,
$$
\emph{if, and only if}, the parameters $\lambda,\mu$ satisfy the restriction:
\begin{equation}
\label{CFLlaxwendroff}
(\lambda \, a)^2 \, + \, (\mu \, b)^2 \, \le \, \dfrac{1}{2} \, .
\end{equation}
The extension of this result to symmetric hyperbolic \emph{systems} is the purpose of \cite{laxwendroff,tadmor, jfc}. However, all these references are 
based on Fourier analysis and a sharp estimate of the numerical radius or of the norm of the amplification matrix. This technique is of little use for 
half-space or quarter-space problems as we intend to study below. We thus propose below an alternative proof of the stability result of \cite{laxwendroff} 
for \eqref{LW} by using the energy method. We restrict for simplicity to the scalar case since our main concern is to deal with extrapolation procedures 
for outgoing transport equations. Our goal is to recover the same sufficient condition \eqref{CFLlaxwendroff} for stability of \eqref{LW} in $\ell^2(\Z^2;\R)$ 
(the necessity of \eqref{CFLlaxwendroff} for stability is proved in \cite{laxwendroff} by computing the amplification matrix at the frequency $(\pi,\pi)$).

\subsection{The energy method}

We now explain how the energy method gives the optimal stability criterion \eqref{CFLlaxwendroff} for \eqref{LW} on the whole space $\Z^2$. Our main 
result is Corollary \ref{cor1} at the end of this section. Since the result is not new, we rather focus on the method and all intermediate steps and postpone 
the statement of the main result to the end once all preliminary steps have been achieved. We thus start from the definition \eqref{LW} and decompose 
$u_{j,k}^{n+1}$ into three pieces:
$$
\forall \, (j,k) \in \Z^2 \, ,\quad u_{j,k}^{n+1} \, = \, u_{j,k}^n \, - \, w_{j,k}^n \, + \, v_{j,k}^n \, ,
$$
where $v_{j,k}^n$ and $w_{j,k}^n$ are defined by:
\begin{subequations}
\label{defvwn}
\begin{align}
v_{j,k}^n \, :=& \, - \, \dfrac{\lambda \, a}{2} \, \big( u_{j+1,k}^n-u_{j-1,k}^n \big) \, - \, \dfrac{\mu \, b}{2} \, \big( u_{j,k+1}^n-u_{j,k-1}^n \big) \, ,\label{defvjkn} \\
w_{j,k}^n \, :=& \, - \, \dfrac{(\lambda \, a)^2}{2} \, \big( u_{j+1,k}^n-2\, u_{j,k}^n+u_{j-1,k}^n \big) \, - \, 
\dfrac{(\mu \, b)^2}{2} \, \big( u_{j,k+1}^n-2\, u_{j,k}^n+u_{j,k-1}^n \big) \notag \\
& - \, \dfrac{\lambda \, \mu \, a \, b}{4} \, \big( u_{j+1,k+1}^n-u_{j+1,k-1}^n-u_{j-1,k+1}^n+u_{j-1,k-1}^n \big) \label{defwjkn} \\
& + \, \dfrac{(\lambda \, a)^2 + (\mu \, b)^2}{8} \, \big( u_{j+1,k+1}^n-2\, u_{j+1,k}^n+u_{j+1,k-1}^n \notag \\
& \qquad \qquad \qquad \qquad - \, 2 \, u_{j,k+1}^n+4\, u_{j,k}^n-2\, u_{j,k-1}^n+u_{j-1,k+1}^n-2\, u_{j-1,k}^n+u_{j-1,k-1}^n \big) \, ,\notag
\end{align}
\end{subequations}

It will be useful below to use operator notations in order to highlight symmetry or skew-symmetry properties. We thus introduce the following discrete first 
order partial derivatives and Laplacians:
\begin{align*}
&(D_{1,+}U)_{j,k} \, := \, U_{j+1,k} \, - \, U_{j,k} \, ,\quad (D_{1,-}U)_{j,k} \, := \, U_{j,k} \, - \, U_{j-1,k} \, ,\\
&(D_{2,+}U)_{j,k} \, := \, U_{j,k+1} \, - \, U_{j,k} \, ,\quad (D_{2,-}U)_{j,k} \, := \, U_{j,k} \, - \, U_{j,k-1} \, ,\\
&D_{1,0} \, := \, \dfrac{D_{1,+} +D_{1,-}}{2} \, ,\quad D_{2,0} \, := \, \dfrac{D_{2,+} +D_{2,-}}{2} \, ,\quad 
\Delta_1 \, := \, D_{1,+} \, D_{1,-} \, ,\quad \Delta_2 \, := \, D_{2,+} \, D_{2,-} \, .
\end{align*}
In order to keep the notation as simple as possible, we write below $D_{1,+}u_{j,k}$ rather than $(D_{1,+}u)_{j,k}$ and analogously for other operators. 
We hope that this does not create any confusion. With such definitions, the operators $D_{1,0}$ and $D_{2,0}$ are skew-selfadjoint on $\ell^2(\Z^2;\R)$ 
and the operators $\Delta_1$, $\Delta_2$ are selfadjoint on $\ell^2(\Z^2;\R)$, see \cite{gko} (this is known as \emph{discrete integration by parts} or 
\emph{Abel's transform}). We also have $D_{1,-}^*=-D_{1,+}$ and $D_{2,-}^*=-D_{2,+}$ for the $\langle \, \cdot \, ; \cdot \rangle_{\ell^2(\Z^2)}$ scalar 
product. Let us eventually observe that all operators defined above commute, which will also be useful below.

The above definitions allow us to rewrite \eqref{defvwn} in a compact form as:
\begin{subequations}
\label{defvwn'}
\begin{align}
v^n \, :=& \, - \, \lambda \, a \, D_{1,0} \, u^n \, - \, \mu \, b \, D_{2,0} \, u^n \, ,\label{defvjkn'} \\
w^n \, :=& \, - \, 
\dfrac{(\lambda \, a)^2}{2} \, \Delta_1 \, u^n \, - \, \dfrac{(\mu \, b)^2}{2} \, \Delta_2 \, u^n \, - \, \lambda \, \mu \, a \, b \, D_{1,0} \, D_{2,0} \, u^n 
\, + \, \dfrac{(\lambda \, a)^2 + (\mu \, b)^2}{8} \, \Delta_1 \, \Delta_2 \, u^n \, .\label{defwjkn'}
\end{align}
\end{subequations}
As a consequence, we observe that $v^n$ gathers all the skew-selfadjoint operators acting on $u^n$ and $w^n$ gathers all the selfadjoint 
operators acting on $u^n$. In particular, we easily obtain the following relation:
\begin{equation}
\label{energie-cauchy}
\| \, u^{n+1} \, \|_{\ell^2(\Z^2)}^2 \, - \, \| \, u^n \, \|_{\ell^2(\Z^2)}^2 \, = \, \| \, w^n \, \|_{\ell^2(\Z^2)}^2 \, + \, \| \, v^n \, \|_{\ell^2(\Z^2)}^2 \, - \, 
2 \, \langle u^n \, ; \, w^n \rangle_{\ell^2(\Z^2)} \, ,
\end{equation}
because $v^n$ is orthogonal to both $u^n$ and $w^n$ in $\ell^2(\Z^2;\R)$. The stability analysis of the Lax-Wendroff scheme \eqref{LW} in the 
whole space $\Z^2$ then relies on the following two results, whose proof will be given below.

\begin{lemma}
\label{lem1}
Let $u^n \in \ell^2(\Z^2;\R)$, and let the sequences $v^n,w^n \in \ell^2(\Z^2;\R)$ be defined by \eqref{defvwn'}. Then there holds:
\begin{align*}
\| \, v^n \, \|_{\ell^2(\Z^2)}^2 \, - \, 2 \, \langle u^n \, ; \, w^n \rangle_{\ell^2(\Z^2)} \, = \, & 
\, - \, \dfrac{(\lambda \, a)^2}{4} \, \| \, \Delta_1 \, u^n \, \|_{\ell^2(\Z^2)}^2 \, - \, \dfrac{(\mu \, b)^2}{4} \, \| \, \Delta_2 \, u^n \, \|_{\ell^2(\Z^2)}^2 \\
& \, - \, \dfrac{(\lambda \, a)^2 +(\mu \, b)^2}{4} \, \| \, D_{1,+} \, D_{2,+} \, u^n \, \|_{\ell^2(\Z^2)}^2 \, .
\end{align*}
\end{lemma}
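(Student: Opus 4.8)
The plan is to expand everything in Fourier variables, or equivalently, to exploit the commuting operator calculus directly on the $\ell^2(\Z^2;\R)$ side. Since all the operators $D_{1,0}, D_{2,0}, \Delta_1, \Delta_2, D_{1,\pm}, D_{2,\pm}$ commute and have known adjoint relations ($D_{1,0}^*=-D_{1,0}$, $D_{2,0}^*=-D_{2,0}$, $\Delta_i^*=\Delta_i$, $D_{i,-}^*=-D_{i,+}$), the left-hand side $\|v^n\|^2 - 2\langle u^n; w^n\rangle$ is a quadratic form in $u^n$ whose symbol (after conjugating by the discrete Fourier transform) is an explicit real-valued function of the two frequency variables $\theta_1,\theta_2\in[-\pi,\pi]$. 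The strategy is therefore: (i) write $\|v^n\|^2$ using $v^n=-\lambda a\,D_{1,0}u^n-\mu b\,D_{2,0}u^n$, expanding the square and using skew-adjointness to turn $\langle D_{1,0}u^n;D_{1,0}u^n\rangle$ into $-\langle u^n;D_{1,0}^2u^n\rangle$ and the cross term $\langle D_{1,0}u^n;D_{2,0}u^n\rangle$ into $-\langle u^n;D_{1,0}D_{2,0}u^n\rangle$; (ii) write $-2\langle u^n;w^n\rangle$ directly from the definition of $w^n$, using self-adjointness of $\Delta_1,\Delta_2$ and of $\Delta_1\Delta_2$ and skew-adjointness of $D_{1,0}D_{2,0}$ appropriately so that everything becomes $\langle u^n; P u^n\rangle$ for a single self-adjoint polynomial $P$ in the commuting operators; (iii) collect terms and recognize the result as the claimed combination of $\|\Delta_1 u^n\|^2$, $\|\Delta_2 u^n\|^2$ and $\|D_{1,+}D_{2,+}u^n\|^2$.

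The key algebraic identities I would use to make step (iii) work are the operator relations $D_{1,0}^2 = \Delta_1 + \tfrac14\Delta_1^2$ (which follows from $D_{1,0}=\tfrac12(D_{1,+}+D_{1,-})$ and $D_{1,+}-D_{1,-}=\Delta_1$, so $D_{1,0}^2=\tfrac14(D_{1,+}+D_{1,-})^2=\tfrac14(4D_{1,+}D_{1,-}+(D_{1,+}-D_{1,-})^2)=\Delta_1+\tfrac14\Delta_1^2$), the analogous $D_{2,0}^2=\Delta_2+\tfrac14\Delta_2^2$, and the fact that $\|\Delta_i u^n\|^2 = \langle u^n;\Delta_i^2 u^n\rangle$ (using $\Delta_i^*=\Delta_i$), together with $\|D_{1,+}D_{2,+}u^n\|^2 = \langle u^n; (D_{1,+}^*D_{1,+})(D_{2,+}^*D_{2,+})u^n\rangle = \langle u^n; (-\Delta_1)(-\Delta_2)u^n\rangle=\langle u^n;\Delta_1\Delta_2 u^n\rangle$ since $D_{1,+}^*D_{1,+}=-D_{1,-}D_{1,+}=-\Delta_1$. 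With these substitutions the right-hand side of the lemma becomes $\langle u^n; Q u^n\rangle$ for an explicit polynomial $Q$ in $\Delta_1,\Delta_2$, and I would match coefficients of $\Delta_1$, $\Delta_2$, $\Delta_1^2$, $\Delta_2^2$, $\Delta_1\Delta_2$ on both sides.

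Carrying this out: from $v^n$ one gets $\|v^n\|^2 = -(\lambda a)^2\langle u^n;D_{1,0}^2u^n\rangle -(\mu b)^2\langle u^n;D_{2,0}^2u^n\rangle -2\lambda a\mu b\,\langle u^n;D_{1,0}D_{2,0}u^n\rangle$. From $w^n$ one gets $-2\langle u^n;w^n\rangle = (\lambda a)^2\langle u^n;\Delta_1 u^n\rangle + (\mu b)^2\langle u^n;\Delta_2 u^n\rangle + 2\lambda a\mu b\,\langle u^n;D_{1,0}D_{2,0}u^n\rangle - \tfrac{(\lambda a)^2+(\mu b)^2}{4}\langle u^n;\Delta_1\Delta_2 u^n\rangle$. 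The cross terms $2\lambda a\mu b\,\langle u^n;D_{1,0}D_{2,0}u^n\rangle$ cancel exactly, which is the crucial simplification; then using $D_{1,0}^2=\Delta_1+\tfrac14\Delta_1^2$ the $(\lambda a)^2\langle u^n;\Delta_1 u^n\rangle$ terms cancel against the $\Delta_1$ part of $-\|v^n\|^2$, leaving $-\tfrac{(\lambda a)^2}{4}\langle u^n;\Delta_1^2 u^n\rangle$, similarly $-\tfrac{(\mu b)^2}{4}\langle u^n;\Delta_2^2 u^n\rangle$, and the leftover $-\tfrac{(\lambda a)^2+(\mu b)^2}{4}\langle u^n;\Delta_1\Delta_2 u^n\rangle$, which upon rewriting $\langle u^n;\Delta_i^2 u^n\rangle=\|\Delta_i u^n\|^2$ and $\langle u^n;\Delta_1\Delta_2 u^n\rangle=\|D_{1,+}D_{2,+}u^n\|^2$ is precisely the asserted identity. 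I do not expect a genuine obstacle here; the only thing to be careful about is bookkeeping of signs in the adjoint relations (particularly that $D_{i,+}^*D_{i,+}=-\Delta_i$, not $+\Delta_i$) and making sure the cross term signs genuinely cancel rather than double.
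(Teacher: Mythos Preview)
Your proof is correct and follows essentially the same route as the paper. The only cosmetic difference is in packaging: where the paper invokes Lemma~\ref{lem2} in the form $\|D_{i,0}U\|^2=\|D_{i,+}U\|^2-\tfrac14\|\Delta_i U\|^2$ and the relation $\langle u;\Delta_i u\rangle=-\|D_{i,+}u\|^2$, you use the equivalent operator identity $D_{i,0}^2=\Delta_i+\tfrac14\Delta_i^2$ and work entirely at the level of $\langle u^n;Pu^n\rangle$ for a self-adjoint polynomial $P$ in $\Delta_1,\Delta_2$. One remark on spirit: you frame the computation as a Fourier-symbol calculation, but the paper's entire purpose is to bypass Fourier analysis so that the argument extends to half- and quarter-spaces; since your actual execution stays in the commuting operator calculus on $\ell^2(\Z^2;\R)$ and never genuinely passes to the Fourier side, this is harmless here, but it is worth keeping in mind for the later sections.
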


\begin{proposition}
\label{prop1}
Let $u^n \in \ell^2(\Z^2;\R)$, and let the sequence $w^n \in \ell^2(\Z^2;\R)$ be defined by \eqref{defwjkn'}. Then there holds:
\begin{multline}
\label{estimprop1}
4 \, \| \, w^n \, \|_{\ell^2(\Z^2)}^2 \, \le \, 2 \, \big( (\lambda \, a)^2 +(\mu \, b)^2 \big) \, 
\Big\{ (\lambda \, a)^2 \, \| \, \Delta_1 \, u^n \, \|_{\ell^2(\Z^2)}^2 \, + \, (\mu \, b)^2 \, \| \, \Delta_2 \, u^n \, \|_{\ell^2(\Z^2)}^2 \\
\, + \, \big( (\lambda \, a)^2 +(\mu \, b)^2 \big) \, \| \, D_{1,+} \, D_{2,+} \, u^n \, \|_{\ell^2(\Z^2)}^2 \Big\} \, .
\end{multline}
\end{proposition}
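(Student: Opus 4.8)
The plan is to estimate the $\ell^2$ norm of $w^n$ directly from its expression \eqref{defwjkn'} as a linear combination of four terms, $-\tfrac{(\lambda a)^2}{2}\Delta_1 u^n$, $-\tfrac{(\mu b)^2}{2}\Delta_2 u^n$, $-\lambda\mu ab\, D_{1,0}D_{2,0}u^n$, and $\tfrac{(\lambda a)^2+(\mu b)^2}{8}\Delta_1\Delta_2 u^n$. First I would apply the triangle inequality together with the Cauchy-Schwarz-type bound $\|\sum_i c_i x_i\|^2 \le (\sum_i |c_i| \alpha_i^{-1})(\sum_i |c_i| \alpha_i \|x_i\|^2)$ for suitable weights, or more simply group terms so that the factor $(\lambda a)^2 + (\mu b)^2$ can be pulled out cleanly. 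The key algebraic facts I would use are: $D_{1,0}^2 = \Delta_1 + \tfrac14 \Delta_1^2$ (and similarly for direction $2$), which follows from $D_{1,0} = \tfrac12(D_{1,+}+D_{1,-})$ and $D_{1,+} - D_{1,-} = \Delta_1$, so that $D_{1,0}D_{2,0} u^n$ can be controlled in terms of $\Delta_1 u^n$, $\Delta_2 u^n$, and the mixed term $D_{1,+}D_{2,+}u^n$. The appearance of $\|D_{1,+}D_{2,+}u^n\|$ on the right-hand side of \eqref{estimprop1} is precisely what makes this work: the cross term $D_{1,0}D_{2,0}$ naturally produces mixed second differences.

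The main steps, in order, would be: (i) rewrite $D_{1,0}D_{2,0}u^n$ using the identities above so that every piece of $w^n$ is expressed through the three ``model'' quantities $\Delta_1 u^n$, $\Delta_2 u^n$, and $D_{1,+}D_{2,+}u^n$ (here I would also need to relate $\Delta_1\Delta_2 u^n$ to $(D_{1,+}D_{2,+}u^n)$-type quantities via $\Delta_1\Delta_2 = D_{1,+}D_{1,-}D_{2,+}D_{2,-}$ and the commutation of all operators, noting $\|\Delta_1\Delta_2 u^n\| \le$ something like $4\|D_{1,+}D_{2,+}u^n\|$ is false in general, so care is needed — more likely I keep $\Delta_1\Delta_2 u^n = \Delta_2(\Delta_1 u^n)$ and bound $\|\Delta_2 z\| \le 4\|z\|$ is also too lossy, so instead I use $\Delta_1\Delta_2 u^n = D_{2,-}D_{1,-}(D_{1,+}D_{2,+}u^n)$ and $\|D_{1,-}\|,\|D_{2,-}\|\le 2$); (ii) collect coefficients and apply a weighted Cauchy-Schwarz inequality on the resulting finite sum of vectors; (iii) simplify the numerical constants to match the factor $2((\lambda a)^2+(\mu b)^2)$ and the bracketed expression exactly. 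Throughout I would use that $D_{1,0}$, $D_{2,0}$, $\Delta_1$, $\Delta_2$ all commute and are bounded on $\ell^2(\Z^2;\R)$, with the elementary norm bounds $\|D_{i,\pm}\| \le 2$.

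The hard part will be step (ii)–(iii): choosing the grouping of the four terms of $w^n$ and the weights in the Cauchy-Schwarz inequality so that the constant comes out to be exactly $2((\lambda a)^2 + (\mu b)^2)$ rather than something larger. A naive application of the triangle inequality squared would give a sum of cross terms with the wrong homogeneity; the trick is presumably to write $w^n$ as $((\lambda a)^2+(\mu b)^2)^{1/2}$ times a vector and then bound the remaining factor, or to exploit that the coefficients of $\Delta_1 u^n$ and $\Delta_2 u^n$ are $\tfrac{(\lambda a)^2}{2}$ and $\tfrac{(\mu b)^2}{2}$ while the mixed coefficient involves $\lambda\mu ab$ with $2|\lambda\mu ab| \le (\lambda a)^2 + (\mu b)^2$. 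I expect the proof to hinge on that last Young-type inequality to convert the genuinely bilinear cross term into the symmetric combination appearing on the right-hand side, and the rest is bookkeeping of constants.
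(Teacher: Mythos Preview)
Your proposal has a genuine gap: the triangle-inequality/operator-norm route cannot produce the sharp constant $2\big((\lambda a)^2+(\mu b)^2\big)$, and the result is useless without it (since Corollary~\ref{cor1} needs the right-hand side of \eqref{estimprop1} to match the negative terms from Lemma~\ref{lem1} exactly). You yourself flag the bottleneck: any bound of the type $\|\Delta_1\Delta_2 u\|\le 4\|D_{1,+}D_{2,+}u\|$ (via $\|D_{1,-}\|,\|D_{2,-}\|\le 2$) contributes an $O(\alpha^2+\beta^2)$ term to $\|w\|$ before squaring, and no choice of weights in a Cauchy--Schwarz inequality on three or four summands then recovers the required coefficient. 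Concretely, after your bounds one has $\|w\|\le \tfrac{\alpha^2}{2}\|\Delta_1 u\|+\tfrac{\beta^2}{2}\|\Delta_2 u\|+(\alpha^2+\beta^2)\|D_{1,+}D_{2,+}u\|$, and squaring this with optimal weights forces a prefactor strictly larger than $2(\alpha^2+\beta^2)$.

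The paper does \emph{not} apply the triangle inequality to the four pieces of $w$. It expands $4\|w\|^2$ directly, keeping all cross terms, and then manipulates them via the adjoint relations $D_{1,-}^*=-D_{1,+}$, $\Delta_i^*=\Delta_i$, etc. The decisive ingredient you are missing is the \emph{exact} identity of Lemma~\ref{lem2}, namely
\[
\|D_{1,0}D_{2,0}u\|^2=\|D_{1,+}D_{2,+}u\|^2-\tfrac14\|D_{1,+}\Delta_2 u\|^2-\tfrac14\|D_{2,+}\Delta_1 u\|^2+\tfrac{1}{16}\|\Delta_1\Delta_2 u\|^2,
\]
which, once inserted, makes the $\|\Delta_1\Delta_2 u\|^2$ contributions from different terms \emph{cancel} rather than accumulate. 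A second ingredient you do not anticipate is the use of the averaging operators $A_{i,\pm}$ together with the identity $\|A_{i,\pm}W\|^2+\tfrac14\|D_{i,\pm}W\|^2=\|W\|^2$, which is what allows the remaining cross terms (called $B_1,B_2$ in the paper) to be bounded without loss. Your plan of ``bookkeeping of constants'' after a crude first step cannot succeed; the sharp constant comes from these cancellations, not from a clever weighting.
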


The remainder of this section is devoted to the proof of Lemma \ref{lem1} and Proposition \ref{prop1}. In the end, we explain how these 
two results give a stability, and even a dissipativity, estimate for the Lax-Wendroff scheme \eqref{LW} under suitable CFL conditions. Before proving 
Lemma \ref{lem1} and Proposition \ref{prop1}, we state a first crucial lemma which will be very useful below and will also guide us in the analysis of 
the half-space and quarter-space problems.

\begin{lemma}
\label{lem2}
Let $U \in \ell^2(\Z^2;\R)$. Then there holds:
\begin{subequations}
\label{formuleslem2}
\begin{align}
\| \, D_{1,0} \, U \, \|_{\ell^2(\Z^2)}^2 \, &= \, \| \, D_{1,+} \, U \, \|_{\ell^2(\Z^2)}^2 \, - \, \dfrac{1}{4} \, \| \, \Delta_1 \, U \, \|_{\ell^2(\Z^2)}^2 \, ,\label{formulelem2-1} \\
\| \, D_{2,0} \, U \, \|_{\ell^2(\Z^2)}^2 \, &= \, \| \, D_{2,+} \, U \, \|_{\ell^2(\Z^2)}^2 \, - \, \dfrac{1}{4} \, \| \, \Delta_2 \, U \, \|_{\ell^2(\Z^2)}^2 \, ,\label{formulelem2-2} \\
\| \, D_{1,0} \, D_{2,0} \, U \, \|_{\ell^2(\Z^2)}^2 \, &= \, \| \, D_{1,+} \, D_{2,+} \, U \, \|_{\ell^2(\Z^2)}^2 \notag \\
& \qquad - \, \dfrac{1}{4} \, \| \, D_{1,+} \, \Delta_2 \, U \, \|_{\ell^2(\Z^2)}^2 \, - \, \dfrac{1}{4} \, \| \, D_{2,+} \, \Delta_1 \, U \, \|_{\ell^2(\Z^2)}^2 
\, + \, \dfrac{1}{16} \, \| \, \Delta_1 \, \Delta_2 \, U \, \|_{\ell^2(\Z^2)}^2 \, .\label{formulelem2-3}
\end{align}
\end{subequations}
\end{lemma}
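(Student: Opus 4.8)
The plan is to reduce all three identities to two elementary facts about the one-dimensional difference operators and then obtain the mixed formula \eqref{formulelem2-3} by iteration. Throughout, all norms and scalar products are those of $\ell^2(\Z^2;\R)$, and every quantity that appears is finite because the operators involved are bounded on $\ell^2(\Z^2;\R)$. First I would record the algebraic relations that drive the computation: from the definitions one has $D_{1,+}-D_{1,-}=\Delta_1$ and $D_{2,+}-D_{2,-}=\Delta_2$, hence $D_{1,0}=D_{1,+}-\tfrac12\,\Delta_1$ and $D_{2,0}=D_{2,+}-\tfrac12\,\Delta_2$. Combining $D_{1,-}^*=-D_{1,+}$ (so also $D_{1,+}^*=-D_{1,-}$) with the commutation of all the operators gives $D_{1,+}^*D_{1,+}=-\Delta_1=D_{1,-}^*D_{1,-}$, and therefore $\| D_{1,+}U \| = \| D_{1,-}U \|$ for every $U\in\ell^2(\Z^2;\R)$; the same holds in the second direction.

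The key scalar identity I would then establish is
$$\langle D_{1,+}\,U \, ; \, \Delta_1 \, U \rangle \, = \, \tfrac12 \, \| \, \Delta_1 \, U \, \|^2 \, ,$$
together with its analogue in which the index $1$ is replaced by $2$. To prove it, I write $\Delta_1 U=D_{1,+}U-D_{1,-}U$ and expand the two scalar products $\langle D_{1,\pm}U\,;\,\Delta_1 U\rangle$; using the symmetry of the real scalar product together with $\| D_{1,+}U \| = \| D_{1,-}U \|$ one finds $\langle D_{1,-}U\,;\,\Delta_1 U\rangle=-\langle D_{1,+}U\,;\,\Delta_1 U\rangle$, and subtracting the two relations and invoking $D_{1,+}-D_{1,-}=\Delta_1$ yields the claimed identity. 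Granting this, \eqref{formulelem2-1} follows by squaring $D_{1,0}U=D_{1,+}U-\tfrac12\,\Delta_1 U$ and substituting:
$$\| \, D_{1,0}\,U \, \|^2 \, = \, \| \, D_{1,+}\,U \, \|^2 \, - \, \langle D_{1,+}\,U \, ; \, \Delta_1\,U \rangle \, + \, \tfrac14 \, \| \, \Delta_1\,U \, \|^2 \, = \, \| \, D_{1,+}\,U \, \|^2 \, - \, \tfrac14 \, \| \, \Delta_1\,U \, \|^2 \, ,$$
and \eqref{formulelem2-2} is obtained by the same computation after exchanging the roles of the two directions.

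For the mixed identity \eqref{formulelem2-3} I would argue by iteration. Since $D_{2,0}U\in\ell^2(\Z^2;\R)$, applying \eqref{formulelem2-1} with $D_{2,0}U$ in place of $U$ and commuting $D_{1,0},D_{1,+},\Delta_1$ past $D_{2,0}$ gives $\| D_{1,0}D_{2,0}U \|^2 = \| D_{2,0}D_{1,+}U \|^2 - \tfrac14 \| D_{2,0}\Delta_1 U \|^2$. I would then apply \eqref{formulelem2-2} separately to the two sequences $D_{1,+}U$ and $\Delta_1 U$ (again commuting the direction-$1$ operators past the direction-$2$ ones, and using $\Delta_1\Delta_2=\Delta_2\Delta_1$); substituting the results and collecting the four terms reproduces exactly the right-hand side of \eqref{formulelem2-3}.

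As for difficulty, there is no genuine obstacle: the proof is a short, essentially forced computation. The only points requiring a little care are the verification of the scalar identity $\langle D_{1,+}U\,;\,\Delta_1 U\rangle=\tfrac12\|\Delta_1 U\|^2$, where one must use the adjointness relations $D_{i,-}^*=-D_{i,+}$ (equivalently, the shift-invariance of the $\ell^2$ norm) correctly, and the bookkeeping of commutativity when deriving \eqref{formulelem2-3} from \eqref{formulelem2-1} and \eqref{formulelem2-2}.
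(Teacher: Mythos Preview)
Your argument is correct. For \eqref{formulelem2-3} you proceed exactly as the paper does: apply \eqref{formulelem2-1} to $D_{2,0}U$, commute, and then apply \eqref{formulelem2-2} to each of the two resulting terms.

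For \eqref{formulelem2-1}--\eqref{formulelem2-2} your route differs slightly from the paper's. The paper establishes the \emph{pointwise} algebraic identity
\[
\tfrac14(U_{j+1}-U_{j-1})^2+\tfrac14(U_{j+1}-2U_j+U_{j-1})^2=\tfrac12(U_{j+1}-U_j)^2+\tfrac12(U_j-U_{j-1})^2
\]
and then sums over $j\in\Z$, noting that the two sums on the right coincide by a shift of index. You instead work globally in $\ell^2$: write $D_{1,0}=D_{1,+}-\tfrac12\Delta_1$, expand the square, and reduce everything to the scalar identity $\langle D_{1,+}U;\Delta_1 U\rangle=\tfrac12\|\Delta_1 U\|^2$, which you derive from shift-invariance ($\|D_{1,+}U\|=\|D_{1,-}U\|$). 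Both derivations are short and essentially forced. The paper's pointwise identity has the minor practical advantage that it is reused verbatim later in the half-space and quarter-space analyses (Lemmas \ref{lem5} and \ref{lemaux}), where one must track the boundary contributions explicitly; your adjointness-based argument would need to be reworked there since $D_{1,-}^*=-D_{1,+}$ fails on $\ell^2(\mathcal{I})$.
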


\begin{proof}[Proof of Lemma \ref{lem2}]
Let us start with \eqref{formulelem2-1} and \eqref{formulelem2-2}. Given three real numbers $U_{j-1},U_j,U_{j+1}$, there holds the relation:
\begin{equation}
\label{equationalgebrique}
\dfrac{(U_{j+1} \, - \, U_{j-1})^2}{4} \, + \, \dfrac{(U_{j+1} \, - \, 2 \, U_j \, + \, U_{j-1})^2}{4} 
\, = \, \dfrac{1}{2} \, (U_{j+1} \, - \, U_j)^2 \, + \, \dfrac{1}{2} \, (U_j \, - \, U_{j-1})^2 \, .
\end{equation}
We now observe that for a square integrable sequence $U \in \ell^2(\Z;\R)$, the two terms on the right hand side of \eqref{equationalgebrique} have 
equal sum:
$$
\sum_{j \in \Z} \, (U_{j+1} \, - \, U_j)^2 \, = \, \sum_{j \in \Z} \, (U_j \, - \, U_{j-1})^2 \, .
$$
Consequently, given $U \in \ell^2(\Z;\R)$, we have the relation (with rather obvious notation for sequences indexed by $\Z$ rather than by $\Z^2$):
\begin{equation}
\label{lem2-formule1D}
\forall \, U \in \ell^2(\Z;\R) \, ,\quad \| \, D_0 \, U \, \|_{\ell^2(\Z)}^2 \, = \, \| \, D_+ \, U \, \|_{\ell^2(\Z)}^2 \, - \, \dfrac{1}{4} \, \| \, \Delta \, U \, \|_{\ell^2(\Z)}^2 \, .
\end{equation}
This formula implies both \eqref{formulelem2-1} and \eqref{formulelem2-2} by using Fubini's Theorem, that is by summing first with respect to $j$ 
or $k$.

The last formula \eqref{formulelem2-3} in Lemma \ref{lem2} is a consequence of \eqref{formulelem2-1} and \eqref{formulelem2-2} by computing 
(use \eqref{formulelem2-1}):
$$
\| \, D_{1,0} \, D_{2,0} \, U \, \|_{\ell^2(\Z^2)}^2 \, = \, 
\| \, D_{1,+} \, D_{2,0} \, U \, \|_{\ell^2(\Z^2)}^2 \, - \, \dfrac{1}{4} \, \| \, \Delta_1 \, D_{2,0} \, U \, \|_{\ell^2(\Z^2)}^2 \, ,
$$
and by then using \eqref{formulelem2-2} for each of the two terms on the right hand side (since $D_{2,0}$ commutes with both $D_{1,+}$ and $\Delta_1$).
\end{proof}

\noindent We can now prove Lemma \ref{lem1}.

\begin{proof}[Proof of Lemma \ref{lem1}]
Let $u^n \in \ell^2(\Z^2;\R)$, and let the sequences $v^n,w^n \in \ell^2(\Z^2;\R)$ be defined by \eqref{defvwn'}. For simplicity, we introduce the notation 
$\alpha := \lambda \, a$ and $\beta := \mu \, b$. Dropping the $n$ superscript for simplicity, we then compute:
\begin{align*}
\| \, v \, \|_{\ell^2(\Z^2)}^2 \, - \, 2 \, \langle u \, ; \, w \rangle_{\ell^2(\Z^2)} \, =& \, \, 
\alpha^2 \, \| \, D_{1,0} \, u \, \|_{\ell^2(\Z^2)}^2 \, + \, \beta^2 \, \| \, D_{2,0} \, u \, \|_{\ell^2(\Z^2)}^2 \\
&+ \, 2 \, \alpha \, \beta \, \langle D_{1,0 } \, u \, ; \, D_{2,0} \, u \rangle_{\ell^2(\Z^2)} \, + \, 
2 \, \alpha \, \beta \, \langle u \, ; \, D_{1,0 } \, D_{2,0} \, u \rangle_{\ell^2(\Z^2)} \\
&+ \, \alpha^2 \, \langle u \, ; \, \Delta_1 \, u \rangle_{\ell^2(\Z^2)} \, + \, \beta^2 \, \langle u \, ; \, \Delta_2 \, u \rangle_{\ell^2(\Z^2)} 
\, - \, \dfrac{\alpha^2 + \beta^2}{4} \, \langle u \, ; \, \Delta_1 \, \Delta_2 \, u \rangle_{\ell^2(\Z^2)} \, .
\end{align*}
Since the operator $D_{1,0}$ is skew-selfadjoint, we observe that the two terms in the second line of the right hand side cancel each other. In the third 
line, we write $\Delta_1=-(D_{1,+})^* \, D_{1,+}$, $\Delta_2=-(D_{2,+})^* \, D_{2,+}$, and $\Delta_1 \, \Delta_2=(D_{1,+} \, D_{2,+})^* \, D_{1,+} \, D_{2,+}$ 
to obtain:
\begin{align*}
\| \, v \, \|_{\ell^2(\Z^2)}^2 \, - \, 2 \, \langle u \, ; \, w \rangle_{\ell^2(\Z^2)} \, =& \, 
\alpha^2 \, \Big( \| \, D_{1,0} \, u \, \|_{\ell^2(\Z^2)}^2 \, - \, \| \, D_{1,+} \, u \, \|_{\ell^2(\Z^2)}^2 \Big) \\
& + \, \beta^2 \, \Big( \| \, D_{2,0} \, u \, \|_{\ell^2(\Z^2)}^2 \, - \, \| \, D_{2,+} \, u \, \|_{\ell^2(\Z^2)}^2 \Big) 
\, - \, \dfrac{\alpha^2 + \beta^2}{4} \, \| \, D_{1,+} \, D_{2,+} \, u \, \|_{\ell^2(\Z^2)}^2 \, .
\end{align*}
It remains to apply relations \eqref{formulelem2-1} and \eqref{formulelem2-2} from Lemma \ref{lem2} to complete the proof of Lemma \ref{lem1}.
\end{proof}

\noindent We now turn to the proof of Proposition \ref{prop1}.

\begin{proof}[Proof of Proposition \ref{prop1}]
Let $u^n \in \ell^2(\Z^2;\R)$, and let the sequence $w^n \in \ell^2(\Z^2;\R)$ be defined by \eqref{defwjkn'}. We use again the short notation 
$\alpha := \lambda \, a$ and $\beta := \mu \, b$, and drop the $n$ superscript for simplicity. We compute the expression:
\begin{align*}
4 \, \| \, w \, \|_{\ell^2(\Z^2)}^2 \, =& \, \, \alpha^4 \, \| \, \Delta_1 \, u \, \|_{\ell^2(\Z^2)}^2 \, + \, \beta^4 \, \| \, \Delta_2 \, u \, \|_{\ell^2(\Z^2)}^2 
\, + \, 2 \, \alpha^2 \, \beta^2 \, {\color{Magenta} \langle \Delta_1 \, u \, ; \, \Delta_2 \, u \rangle_{\ell^2(\Z^2)}} \\
& + \, {\color{ForestGreen} 4 \, \alpha^2 \, \beta^2 \, \| \, D_{1,0} \, D_{2,0} \, u \, \|_{\ell^2(\Z^2)}^2} 
\, + \, \dfrac{(\alpha^2+\beta^2)^2}{16} \, \| \, \Delta_1 \, \Delta_2 \, u \, \|_{\ell^2(\Z^2)}^2 \\
& - \, \dfrac{\alpha^2+\beta^2}{2} \, {\color{blue} \langle \Delta_1 \, \Delta_2 \, u \, ; \, \alpha^2 \, \Delta_1 \, u \, + \, \beta^2 \, \Delta_2 \, u \rangle_{\ell^2(\Z^2)}} \\
& + \, 4 \, \alpha \, \beta \, \langle D_{1,0} \, D_{2,0} \, u \, ; \, \alpha^2 \, \Delta_1 \, u \, + \, \beta^2 \, \Delta_2 \, u \rangle_{\ell^2(\Z^2)} \\
& - \, (\alpha^2+\beta^2) \, \alpha \, \beta \, {\color{red} \langle D_{1,0} \, D_{2,0} \, u \, ; \, \Delta_1 \, \Delta_2 \, u \rangle_{\ell^2(\Z^2)}} \, .
\end{align*}
For the pink term on the right hand side of the first line, we use the very crude estimate:
$$
2 \, \langle \Delta_1 \, u \, ; \, \Delta_2 \, u \rangle_{\ell^2(\Z^2)} \, \le \,  \| \, \Delta_1 \, u \, \|_{\ell^2(\Z^2)}^2 \, + \, \| \, \Delta_2 \, u \, \|_{\ell^2(\Z^2)}^2 \, .
$$
For the green term in the second line of the right hand side, we use the inequality:
\begin{equation}
\label{inegalite-1}
4 \, \alpha^2 \, \beta^2 \, = \, (\alpha^2+\beta^2)^2 \, - \, (\alpha^2-\beta^2)^2 \, \le \, (\alpha^2+\beta^2)^2 \, ,
\end{equation}
and we then use the relation \eqref{formulelem2-3} of Lemma \ref{lem2}. For the blue term (appearing in the third line of the right hand side), we use the 
relations $\Delta_1 =-(D_{1,+})^* \, D_{1,+}$ and $\Delta_2 =-(D_{2,+})^* \, D_{2,+}$. At last, for the red term in the fifth line of the right hand side, we use 
the facts that $D_{1,0}$ is skew-selfadjoint and that $\Delta_2$ is selfadjoint. Collecting all the contributions, we are led to our first preliminary estimate:
\begin{align}
4 \, \| \, w \, \|_{\ell^2(\Z^2)}^2 \, \le & \, \, (\alpha^2+\beta^2) \, 
\Big( \alpha^2 \, \| \, \Delta_1 \, u \, \|_{\ell^2(\Z^2)}^2 \, + \, \beta^2 \, \| \, \Delta_2 \, u \, \|_{\ell^2(\Z^2)}^2 
\, + \, (\alpha^2+\beta^2) \, \| \, D_{1,+} \, D_{2,+} \, u \, \|_{\ell^2(\Z^2)}^2 \Big) \notag \\
& + \, \dfrac{(\alpha^2+\beta^2)^2}{8} \, \| \, \Delta_1 \, \Delta_2 \, u \, \|_{\ell^2(\Z^2)}^2 \, + \, 
4 \, \alpha \, \beta \, \langle D_{1,0} \, D_{2,0} \, u \, ; \, \alpha^2 \, \Delta_1 \, u \, + \, \beta^2 \, \Delta_2 \, u \rangle_{\ell^2(\Z^2)} \notag \\
& + \, {\color{blue} \dfrac{(\alpha^2+\beta^2)}{4} \, (\alpha^2-\beta^2) \, \Big( 
\| \, D_{2,+} \, \Delta_1 \, u \, \|_{\ell^2(\Z^2)}^2 \, - \, \| \, D_{1,+} \, \Delta_2 \, u \, \|_{\ell^2(\Z^2)}^2\Big)} \label{expressionw} \\
& + \, {\color{blue} (\alpha^2+\beta^2) \, \alpha \, \beta \, \langle D_{1,0} \, \Delta_2 \, u \, ; \, D_{2,0} \, \Delta_1 \, u \rangle_{\ell^2(\Z^2)}} \, .\notag
\end{align}
Let us observe at this stage that the first line on the right hand side of \eqref{expressionw} is precisely half of the right hand side of \eqref{estimprop1}. 
We are, to some extent, half way down the road. The analysis now splits in two steps.
\bigskip

$\bullet$ \underline{Step 1.} The bound from above for the norm of $w$ first relies on an estimate of the blue terms in the third and fourth lines on the 
right hand side of \eqref{expressionw}. For later use, we therefore define the quantity:
\begin{equation}
\label{defA}
A \, := \, (\alpha^2-\beta^2) \, \Big( \| \, D_{2,+} \, \Delta_1 \, u \, \|_{\ell^2(\Z^2)}^2 \, - \, \| \, D_{1,+} \, \Delta_2 \, u \, \|_{\ell^2(\Z^2)}^2 \Big) 
\, + \, 4 \, \alpha \, \beta \, \langle D_{1,0} \, \Delta_2 \, u \, ; \, D_{2,0} \, \Delta_1 \, u \rangle_{\ell^2(\Z^2)} \, .
\end{equation}
The estimate \eqref{expressionw} thus reads:
\begin{align*}
4 \, \| \, w \, \|_{\ell^2(\Z^2)}^2 \, \le & \, \, (\alpha^2+\beta^2) \, 
\Big( \alpha^2 \, \| \, \Delta_1 \, u \, \|_{\ell^2(\Z^2)}^2 \, + \, \beta^2 \, \| \, \Delta_2 \, u \, \|_{\ell^2(\Z^2)}^2 
\, + \, (\alpha^2+\beta^2) \, \| \, D_{1,+} \, D_{2,+} \, u \, \|_{\ell^2(\Z^2)}^2 \Big) \\
& + \, \dfrac{(\alpha^2+\beta^2)^2}{8} \, \| \, \Delta_1 \, \Delta_2 \, u \, \|_{\ell^2(\Z^2)}^2 \, + \, 
4 \, \alpha \, \beta \, \langle D_{1,0} \, D_{2,0} \, u \, ; \, \alpha^2 \, \Delta_1 \, u \, + \, \beta^2 \, \Delta_2 \, u \rangle_{\ell^2(\Z^2)} \\
& + \, \dfrac{(\alpha^2+\beta^2)}{4} \, A \, .
\end{align*}
By the Cauchy-Schwarz inequality, the definition \eqref{defA} gives:
$$
A \, \le \, (\alpha^2-\beta^2) \, \Big( \| \, D_{2,+} \, \Delta_1 \, u \, \|_{\ell^2(\Z^2)}^2 \, - \, \| \, D_{1,+} \, \Delta_2 \, u \, \|_{\ell^2(\Z^2)}^2 \Big) 
\, + \, 4 \, |\alpha| \, |\beta| \, \|\, D_{1,0} \, \Delta_2 \, u \, \|_{\ell^2(\Z^2)} \, \| \, D_{2,0} \, \Delta_1 \, u \|_{\ell^2(\Z^2)} \, .
$$
We then use the estimate:
$$
4 \, a_1 \, a_2 \, a_3 \, a_4 \, \le \, \big( a_1^2 \, + \, a_2^2 \big) \, \big( a_3^2 \, + \, a_4^2 \big) \, ,
$$
that is valid for any four real numbers $a_1,a_2,a_3,a_4$, and obtain:
\begin{align*}
A \, \le \, & \, (\alpha^2-\beta^2) \, \Big( \| \, D_{2,+} \, \Delta_1 \, u \, \|_{\ell^2(\Z^2)}^2 \, - \, \| \, D_{1,+} \, \Delta_2 \, u \, \|_{\ell^2(\Z^2)}^2 \Big) \\
&+ \, (\alpha^2+\beta^2) \, \Big( \| \, D_{1,0} \, \Delta_2 \, u \, \|_{\ell^2(\Z^2)}^2 \, + \, \| \, D_{2,0} \, \Delta_1 \, u \, \|_{\ell^2(\Z^2)}^2 \Big) \, .
\end{align*}
We now use the formulas \eqref{formulelem2-1} and \eqref{formulelem2-2} to get the estimate:
$$
A \, \le \, 2 \, \alpha^2 \, \| \, D_{2,+} \, \Delta_1 \, u \, \|_{\ell^2(\Z^2)}^2 \, + \, 2 \, \beta^2 \, \| \, D_{1,+} \, \Delta_2 \, u \, \|_{\ell^2(\Z^2)}^2 
\, - \, \dfrac{(\alpha^2+\beta^2)}{2} \, \| \, \Delta_1 \, \Delta_2 \, u \, \|_{\ell^2(\Z^2)}^2 \, .
$$
We use this estimate of the term $A$ in \eqref{expressionw} to derive the intermediate estimate:
\begin{align}
4 \, \| \, w \, \|_{\ell^2(\Z^2)}^2 \, \le & \, \, (\alpha^2+\beta^2) \, 
\Big( \alpha^2 \, \| \, \Delta_1 \, u \, \|_{\ell^2(\Z^2)}^2 \, + \, \beta^2 \, \| \, \Delta_2 \, u \, \|_{\ell^2(\Z^2)}^2 
\, + \, (\alpha^2+\beta^2) \, \| \, D_{1,+} \, D_{2,+} \, u \, \|_{\ell^2(\Z^2)}^2 \Big) \notag \\
& + \, \dfrac{\alpha^2+\beta^2}{2} \, \Big( 
\alpha^2 \, \| \, D_{2,+} \, \Delta_1 \, u \, \|_{\ell^2(\Z^2)}^2 \, + \, \beta^2 \, \| \, D_{1,+} \, \Delta_2 \, u \, \|_{\ell^2(\Z^2)}^2 \Big) \label{estimw} \\
& + \, 4 \, \alpha \, \beta \, \langle D_{1,0} \, D_{2,0} \, u \, ; \, \alpha^2 \, \Delta_1 \, u \, + \, \beta^2 \, \Delta_2 \, u \rangle_{\ell^2(\Z^2)} \, .\notag
\end{align}
\bigskip

$\bullet$ \underline{Step 2.} We now define the quantities:
\begin{subequations}
\label{defB}
\begin{align}
B_1 \, &:= \, \dfrac{\alpha^2+\beta^2}{2} \, \| \, D_{2,+} \, \Delta_1 \, u \, \|_{\ell^2(\Z^2)}^2 \, + \, 
4 \, \alpha \, \beta \, \langle D_{1,0} \, D_{2,0} \, u \, ; \, \Delta_1 \, u \rangle_{\ell^2(\Z^2)} \, ,\label{defB1} \\
B_2 \, &:= \, \dfrac{\alpha^2+\beta^2}{2} \, \| \, D_{1,+} \, \Delta_2 \, u \, \|_{\ell^2(\Z^2)}^2 \, + \, 
4 \, \alpha \, \beta \, \langle D_{1,0} \, D_{2,0} \, u \, ; \, \Delta_2 \, u \rangle_{\ell^2(\Z^2)} \, .\label{defB2}
\end{align}
\end{subequations}
The estimate \eqref{estimw} thus reads:
\begin{align}
4 \, \| \, w \, \|_{\ell^2(\Z^2)}^2 \, \le& \, \, (\alpha^2+\beta^2) \, 
\Big( \alpha^2 \, \| \, \Delta_1 \, u \, \|_{\ell^2(\Z^2)}^2 \, + \, \beta^2 \, \| \, \Delta_2 \, u \, \|_{\ell^2(\Z^2)}^2 
\, + \, (\alpha^2+\beta^2) \, \| \, D_{1,+} \, D_{2,+} \, u \, \|_{\ell^2(\Z^2)}^2 \Big) \label{estimw-prime} \\
& + \, \alpha^2 \, B_1 \, + \, \beta^2 \, B_2 \, .\notag
\end{align}
The final task is to analyze the terms $B_1$ and $B_2$ defined in \eqref{defB} in order to derive the final estimate \eqref{estimprop1}. 
To do so, we define the following average operators on $\ell^2(\Z^2;\R)$:
\begin{align*}
&(A_{1,+}u)_{j,k} \, := \, \dfrac{u_{j+1,k} \, + \, u_{j,k}}{2} \, ,\quad (A_{1,-}u)_{j,k} \, := \, \dfrac{u_{j,k} \, + \, u_{j-1,k}}{2}  \, ,\\
&(A_{2,+}u)_{j,k} \, := \, \dfrac{u_{j,k+1} \, + \, u_{j,k}}{2} \, ,\quad (A_{2,-}u)_{j,k} \, := \, \dfrac{u_{j,k} \, + \, u_{j,k-1}}{2}  \, .
\end{align*}
It is not hard to see that for the $\ell^2(\Z^2;\R)$ scalar product, there holds $(A_{1,+})^*=A_{1,-}$ and $(A_{2,+})^*=A_{2,-}$. 
Furthermore, we have the relations:
$$
D_{1,0} \, = \, D_{1,+} \, A_{1,-} \, = \, D_{1,-} \, A_{1,+} \, ,\quad D_{2,0} \, = \, D_{2,+} \, A_{2,-} \, = \, D_{2,-} \, A_{2,+} \, ,
$$
and all operators commute. Using the definition \eqref{defB1} and the relation $(A_{2,-})^*=A_{2,+}$, we get:
\begin{align*}
B_1 \, = \, & \, \dfrac{\alpha^2+\beta^2}{2} \, \| \, D_{2,+} \, \Delta_1 \, u \, \|_{\ell^2(\Z^2)}^2 \, + \, 
4 \, \alpha \, \beta \, \langle D_{1,0} \, D_{2,+} \, u \, ; \, A_{2,+} \, \Delta_1 \, u \rangle_{\ell^2(\Z^2)} \\
= \, & \, \dfrac{\alpha^2+\beta^2}{2} \, \| \, D_{2,+} \, \Delta_1 \, u \, \|_{\ell^2(\Z^2)}^2 \, + \, 
4 \, \alpha \, \beta \, \langle A_{1,-} \, D_{1,+} \, D_{2,+} \, u \, ; \, A_{2,+} \, \Delta_1 \, u \rangle_{\ell^2(\Z^2)} \\
\le \, & \, \dfrac{\alpha^2+\beta^2}{2} \, \| \, D_{2,+} \, \Delta_1 \, u \, \|_{\ell^2(\Z^2)}^2 \, + \, 
4 \, |\, \alpha \, | \, |\, \beta \,|\, \| \, A_{1,-} \, D_{1,+} \, D_{2,+} \, u \, \|_{\ell^2(\Z^2)} \, \| \, A_{2,+} \, \Delta_1 \, u \, \|_{\ell^2(\Z^2)}  \, .
\end{align*}
In particular, we get:
\begin{equation}
\label{premiereestimB1}
B_1 \, \le \, \dfrac{\alpha^2+\beta^2}{2} \, \| \, D_{2,+} \, \Delta_1 \, u \, \|_{\ell^2(\Z^2)}^2 \, + \, (\alpha^2+\beta^2) \, 
\Big( \| \, A_{1,-} \, D_{1,+} \, D_{2,+} \, u \, \|_{\ell^2(\Z^2)}^2 \, + \, \| \, A_{2,+} \, \Delta_1 \, u \, \|_{\ell^2(\Z^2)}^2 \Big) \, .
\end{equation}

For any sequence $W \in \ell^2(\Z^2;\R)$, we compute:
$$
\| \, A_{1,-} \, W \, \|_{\ell^2(\Z^2)}^2 \, + \, \dfrac{1}{4} \, \| \, D_{1,-} \, W \, \|_{\ell^2(\Z^2)}^2 \, = \, 
\dfrac{1}{4} \, \sum_{(j,k) \in \Z^2} \, (W_{j,k} \, + \, W_{j-1,k})^2 \, + \, (W_{j,k} \, - \, W_{j-1,k})^2 \, = \, \| \, W \, \|_{\ell^2(\Z^2)}^2 \, ,
$$
and a similar result holds with operators $(A_{1,+},D_{1,+})$ on the left hand side instead of $(A_{1,-},D_{1,-})$ (and similar relations, of course, with 
$(A_{2,+},D_{2,+})$ or $(A_{2,-},D_{2,-})$). Using these relations in \eqref{premiereestimB1}, we end up with our first upper bound for the quantity 
$B_1$ in \eqref{defB1} (the estimate for the quantity $B_2$ in \eqref{defB2} is obtained similarly):
\begin{subequations}
\label{estimB-1}
\begin{align}
B_1 \, & \, \le \, (\alpha^2+\beta^2) \, \Big( \| \, \Delta_1 \, u \, \|_{\ell^2(\Z^2)}^2 \, + \, \| \, D_{1,+} \, D_{2,+} \, u \, \|_{\ell^2(\Z^2)}^2 \Big) \, ,\label{estimB1-1} \\
B_2 \, & \, \le \, (\alpha^2+\beta^2) \, \Big( \| \, \Delta_2 \, u \, \|_{\ell^2(\Z^2)}^2 \, + \, \| \, D_{1,+} \, D_{2,+} \, u \, \|_{\ell^2(\Z^2)}^2 \Big) \, .\label{estimB2-1}
\end{align}
\end{subequations}
Using the estimates \eqref{estimB-1} in \eqref{estimw-prime}, we obtain the estimate \eqref{estimprop1} of Proposition \ref{prop1}.
\end{proof}

\noindent Our final and main result, that actually dates back to \cite{laxwendroff}, is a direct consequence of Lemma \ref{lem1} and Proposition \ref{prop1}. 
The main difference here with \cite{laxwendroff} is that our whole proof is based on the energy method and bypasses Fourier analysis.

\begin{corollary}
\label{cor1}
Let $a,b \in \R$ and let us assume that the CFL parameters $\lambda$ and $\mu$ in \eqref{LW} satisfy:
\begin{equation}
\label{conditionCFL}
(\lambda \, a)^2 +(\mu \, b)^2 \, = \, \dfrac{1 \, - \, \delta}{2} \, ,
\end{equation}
for some real number $\delta \in [0,1]$. Then the two-dimensional Lax-Wendroff scheme \eqref{LW} with an initial condition $u^0 \in \ell^2(\Z^2;\R)$ satisfies:
\begin{equation}
\label{stabilite2D-1}
\forall \, n \in \N \, ,\quad \| \, u^{n+1} \, \|_{\ell^2(\Z^2)}^2 \, - \, \| \, u^n \, \|_{\ell^2(\Z^2)}^2 
\, \le \, - \, \dfrac{\delta}{4} \, \Big( (\lambda \, a)^2 \, \| \, \Delta_1 \, u^n \, \|_{\ell^2(\Z^2)}^2 \, + \, (\mu \, b)^2 \, \| \, \Delta_2 \, u^n \, \|_{\ell^2(\Z^2)}^2 \Big) \, .
\end{equation}
\end{corollary}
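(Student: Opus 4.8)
The plan is to combine the exact identity from Lemma~\ref{lem1} with the upper bound from Proposition~\ref{prop1} inside the energy balance \eqref{energie-cauchy}, and then exploit the CFL condition \eqref{conditionCFL} to absorb the quartic ``stabilizer'' contributions into the negative quadratic terms. First I would start from \eqref{energie-cauchy}, namely
\begin{equation*}
\| \, u^{n+1} \, \|_{\ell^2(\Z^2)}^2 \, - \, \| \, u^n \, \|_{\ell^2(\Z^2)}^2 \, = \, \| \, w^n \, \|_{\ell^2(\Z^2)}^2 \, + \, \Big( \| \, v^n \, \|_{\ell^2(\Z^2)}^2 \, - \, 2 \, \langle u^n \, ; \, w^n \rangle_{\ell^2(\Z^2)} \Big) \, ,
\end{equation*}
and replace the parenthesised group by its value given in Lemma~\ref{lem1}. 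This turns the right-hand side into $\| w^n \|^2$ minus $\tfrac14$ times the quantity
\begin{equation*}
Q \, := \, (\lambda \, a)^2 \, \| \, \Delta_1 \, u^n \, \|_{\ell^2(\Z^2)}^2 \, + \, (\mu \, b)^2 \, \| \, \Delta_2 \, u^n \, \|_{\ell^2(\Z^2)}^2 \, + \, \big( (\lambda \, a)^2 + (\mu \, b)^2 \big) \, \| \, D_{1,+} \, D_{2,+} \, u^n \, \|_{\ell^2(\Z^2)}^2 \, .
\end{equation*}

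Next I would invoke Proposition~\ref{prop1}, which bounds $4\|w^n\|^2$ above by $2\big((\lambda a)^2+(\mu b)^2\big)\,Q$ (the curly-brace expression in \eqref{estimprop1} is exactly $Q$). Writing $\Lambda := (\lambda a)^2 + (\mu b)^2$ for brevity, the balance becomes
\begin{equation*}
\| \, u^{n+1} \, \|_{\ell^2(\Z^2)}^2 \, - \, \| \, u^n \, \|_{\ell^2(\Z^2)}^2 \, \le \, \dfrac{\Lambda}{2} \, Q \, - \, \dfrac{1}{4} \, Q \, = \, \dfrac{2\,\Lambda - 1}{4} \, Q \, .
\end{equation*}
Now I would use the CFL hypothesis \eqref{conditionCFL}, i.e. $\Lambda = (1-\delta)/2$, which gives $2\Lambda - 1 = -\delta$, so the right-hand side equals $-\tfrac{\delta}{4}\,Q$. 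Since $\delta \ge 0$ and all three norms in $Q$ are nonnegative, I can drop the (nonnegative) $D_{1,+}D_{2,+}u^n$ term from $Q$ when $\delta \ge 0$, which only makes the bound larger, yielding precisely \eqref{stabilite2D-1}. The statement then follows for every $n$ since the estimate was derived for an arbitrary time level.

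There is no real obstacle here: the corollary is a bookkeeping consequence of the two preceding results, and the only thing to be careful about is matching the constant ($4\|w^n\|^2 \le 2\Lambda\,Q$ means $\|w^n\|^2 \le \tfrac{\Lambda}{2}Q$, not $\tfrac{\Lambda}{4}Q$), together with checking the sign so that the CFL relation $2\Lambda-1 = -\delta \le 0$ indeed produces a dissipative estimate rather than a growing one. One should also note that \eqref{stabilite2D-1} immediately implies the plain stability bound $\|u^{n+1}\| \le \|u^n\|$, and in turn (by telescoping) $\|u^n\| \le \|u^0\|$ for all $n$, which is the form in which stability is usually quoted; but since the statement only asks for the one-step dissipative inequality, the argument above suffices as written.
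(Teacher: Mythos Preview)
Your proof is correct and follows essentially the same route as the paper: combine the energy balance \eqref{energie-cauchy} with Lemma~\ref{lem1} and Proposition~\ref{prop1}, collect the common factor $Q$, and use \eqref{conditionCFL} to identify the coefficient $2\Lambda-1=-\delta$. The paper's proof is terser but performs exactly the same computation (writing the factor as $-1+2(\lambda a)^2+2(\mu b)^2$); your explicit check of the constant $\|w^n\|^2\le\tfrac{\Lambda}{2}Q$ and the remark on dropping the nonnegative $D_{1,+}D_{2,+}u^n$ term are welcome clarifications.
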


\begin{proof}
We first go back to the relation \eqref{energie-cauchy} and apply Lemma \ref{lem1} and Proposition \ref{prop1} for the terms on the right hand side. 
This gives:
\begin{multline*}
\| \, u^{n+1} \, \|_{\ell^2(\Z^2)}^2 \, - \, \| \, u^n \, \|_{\ell^2(\Z^2)}^2 \\
\le \, \Big( - \, 1 \, + \, 2 \, (\lambda \, a)^2 \, + \, 2 \, (\mu \, b)^2 \Big) \times \left( 
\dfrac{(\lambda \, a)^2}{4} \, \| \, \Delta_1 \, u^n \, \|_{\ell^2(\Z^2)}^2 \, + \, \dfrac{(\mu \, b)^2}{4} \, \| \, \Delta_2 \, u^n \, \|_{\ell^2(\Z^2)}^2 \right. \\
\left. + \, \dfrac{(\lambda \, a)^2 + (\mu \, b)^2}{4} \, \| \, D_{1,+} \, D_{2,+}  \, u^n \, \|_{\ell^2(\Z^2)}^2 \right) \, .
\end{multline*}
The result of Corollary \ref{cor1} follows immediately.
\end{proof}

The first consequence of Corollary \ref{cor1} is that, under the CFL condition \eqref{conditionCFL}, that is, when \eqref{CFLlaxwendroff} holds, the 
scheme \eqref{LW} is stable in $\ell^2(\Z^2;\R)$ since the sequence $( \| \, u^n \, \|_{\ell^2(\Z^2)} )_{n \in \N}$ is nonincreasing. We thus recover 
the optimal stability criterion \eqref{CFLlaxwendroff} without resorting to the amplification factor of \eqref{LW}. Moreover, if $\delta$ is positive in 
\eqref{conditionCFL}, that is, if there holds the strict condition:
$$
(\lambda \, a)^2 +(\mu \, b)^2 \, < \, \dfrac{1}{2} \, ,
$$
with furthermore $a \neq 0$ and $b \neq 0$, then Corollary \ref{cor1} shows that the scheme \eqref{LW} is dissipative of order $4$ (in the sense of 
\cite[Definition 5.2.1]{gko}). The crucial point of the above analysis is that Lemma \ref{lem1} and Proposition \ref{prop1} are obtained by using (discrete) 
integration by parts and Cauchy-Schwarz inequalities. The method of proof bypasses Fourier analysis and thus has a chance to extend to more general 
spatial domains. This is precisely such an extension which we explore below, first in the half-space geometry and then in the quarter-space.

\section{The half-space case with the Neumann boundary condition}
\label{section3}

\subsection{The main result}

The method we have used in Lemma \ref{lem1} and Proposition \ref{prop1} will now guide us in our analysis of the Lax-Wendroff scheme \eqref{LW} 
in a half space with the so-called extrapolation (or Neumann) boundary condition. Namely, we now consider the following \emph{outgoing} transport 
equation in a half-space:
\begin{equation}
\label{transportsortant}
\begin{cases}
\partial_t u \, + \, a \, \partial_x u \, + \, b \, \partial_y u \, = \, 0 \, ,& t \ge 0 \, ,\, (x,y) \in \R^+ \times \R \, ,\\
u_{|_{t=0}} \, = \, u_0 \, ,& 
\end{cases}
\end{equation}
where we assume $a<0$, so that no boundary condition is required at the boundary $\{ x=0 \}$ of the space domain.

For any $j \in \Z$, the numerical scheme \eqref{LW} requires the knowledge of the $u_{j-1,k}^n$'s, $k \in \Z$, in order to determine the $u_{j,k}^{n+1}$'s. 
For the half-space problem \eqref{transportsortant}, we use the discrete set of indices $\mathcal{I} := \N \times \Z$ for the \emph{interior} values of the 
numerical solution. We also use the notation $\mathcal{J} := (\{ -1 \} \cup \N) \times \Z$ for the full set of indices corresponding to the cells on which 
the numerical solution is defined (including the so-called \emph{ghost cells} that corresponds to $j=-1$ in our notation). The grid is depicted on Figure 
\ref{fig:maillage1} below.

\begin{figure}[htbp]
\begin{center}
\begin{tikzpicture}[scale=1.5,>=latex]
\draw [ultra thin, dotted, fill=blue!20] (-3,-2) rectangle (3.5,2);
\draw [ultra thin, dotted, fill=red!20] (-3.5,-2) rectangle (-3,2);
\draw [thin, dashed] (-3.5,-2) grid [step=0.5] (3.5,2);
\draw[thick,black,->] (-4,0) -- (4,0) node[below] {$x$};
\draw[thick,black,->] (-3,-2.2)--(-3,2.5) node[right] {$y$};
\draw (-3.73,-0.38) node[right, fill=red!20]{$-\Delta y$};
\draw (-3.9,-0.88) node[right, fill=red!20]{$-2 \, \Delta y$};
\draw (-3.7,1.15) node[right, fill=red!20]{$k \, \Delta y$};
\draw [thin, dashed] (-3.5,1) grid [step=0.5] (-3,1.5);
\draw (-3.6,-0.18) node{$-\Delta x$};
\draw (-3.18,-0.18) node[right]{$\, \, \, 0$};
\draw (-2.6,-0.18) node{$\, \, \, \, \, \Delta x$};
\draw (2.35,-0.18) node{$\, \, \, \, \, j \, \Delta x$};
\node (centre) at (-3.5,0){$\times$};
\node (centre) at (-3,0){$\times$};
\node (centre) at (-2.5,0){$\times$};
\node (centre) at (2.5,0){$\times$};
\node (centre) at (-3,1){$\times$};
\node (centre) at (-3,-0.5){$\times$};
\node (centre) at (-3,-1){$\times$};
\draw (2.75,1.25) node{$u_{j,k}^n$};
\end{tikzpicture}
\caption{The grid for the half-space problem. Interior cells appear in blue and the boundary (ghost) cells appear in red. The value $u_{j,k}^n$ corresponds 
to the approximation in the cell $[j \, \Delta x,(j+1) \, \Delta x) \times [k \, \Delta y,(k+1) \, \Delta y)$.}
\label{fig:maillage1}
\end{center}
\end{figure}
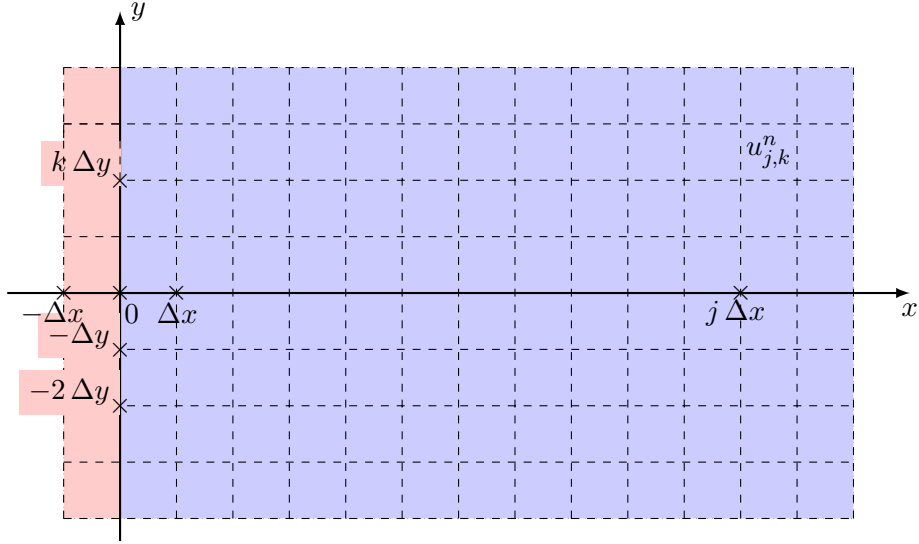

We consider from now on the so-called extrapolation (or Neumann) boundary condition:
\begin{equation}
\label{extrapolation}
\forall \, n \in \N \, ,\quad \forall \, k \in \Z \, ,\quad u_{-1,k}^n \, = \, u_{0,k}^n \, ,
\end{equation}
in conjunction with the numerical scheme \eqref{LW} for $(j,k) \in \mathcal{I}$ (that is, for interior values).

It is useful below to define the Hilbert space $\mathcal{H}$ of real valued, square integrable sequences on $\mathcal{J}$ that satisfy the extrapolation 
boundary condition \eqref{extrapolation}, that is:
\begin{equation}
\label{defH}
\mathcal{H} \, := \, \Big\{ u \in \ell^2(\mathcal{J};\R) \quad |\quad \forall \, k \in \Z \, ,\quad u_{-1,k} \, = \, u_{0,k} \Big\} \, .
\end{equation}
The norm on $\mathcal{H}$ is defined as follows:
\begin{equation}
\label{normH}
\forall \, u \in \mathcal{H} \, ,\quad \| \, u \, \|_{\mathcal{H}}^2 \, := \, \sum_{(j,k) \in \mathcal{I}} \, u_{j,k}^2 \, ,
\end{equation}
that is, we only use interior values of $u$ to compute the $\ell^2$ norm. One easily verifies that the space $\mathcal{H}$ equipped with the above norm 
is a Hilbert space. Given $u^n \in \mathcal{H}$, our numerical approximation of the solution to \eqref{transportsortant} consists in determining $u^{n+1}$ 
by imposing \eqref{LW} in the interior cells indexed by $\mathcal{I}$ (that is, for $j \in \N$) and by requiring $u^{n+1} \in \mathcal{H}$ to determine the 
values in the ghost cells, that is by imposing \eqref{extrapolation} at the following time step:
$$
\forall \, k \in \Z \, ,\quad u_{-1,k}^{n+1} \, = \, u_{0,k}^{n+1} \, .
$$
In this setting, our main result is the following.

\begin{theorem}
\label{thm1}
Let $a<0$, $b \in \R$, and let the parameters $\lambda$, $\mu$ satisfy the stability condition \eqref{CFLlaxwendroff}. Then the numerical scheme 
consisting of \eqref{LW} on $\mathcal{I}$ with the extrapolation numerical boundary condition \eqref{extrapolation} and an initial condition $u^0 \in 
\mathcal{H}$ satisfies the following property: 
for any $n \in \N$, there holds:
$$
\|\, u^{n+1} \, \|_{\mathcal{H}}^2 \, - \, \|\, u^n \, \|_{\mathcal{H}}^2 \, + \, \dfrac{\lambda \, | \, a \, |}{2} \, \sum_{k \in \Z} \, (u_{0,k}^n)^2 \, 
 \, + \, (\mu \, b)^2 \, \dfrac{(\lambda \, a)^2 + (\mu \, b)^2}{16} \, \sum_{k \in \Z} \, (\Delta_2 \, u_{0,k}^n)^2 \, \le \, 0 \, .
$$
In particular, summing with respect to $n \in \N$, there holds:
$$
\sup_{n \in \N}\, \|\, u^n \, \|_{\mathcal{H}}^2 \, + \, \dfrac{\lambda \, | \, a \, |}{2} \, \sum_{n \in \N} \, \sum_{k \in \Z} \, (u_{0,k}^n)^2 
 \, + \, (\mu \, b)^2 \, \dfrac{(\lambda \, a)^2 + (\mu \, b)^2}{16} \, \sum_{n \in \N} \, \sum_{k \in \Z} \, (\Delta_2 \, u_{0,k}^n)^2 \, \le \, 
 2 \, \|\, u^0 \, \|_{\mathcal{H}}^2 \, .
$$
\end{theorem}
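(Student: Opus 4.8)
## Proof strategy for Theorem \ref{thm1}

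The plan is to mimic the Cauchy-problem analysis of Section \ref{section2}, but to keep careful track of the boundary terms that appear when the discrete integration-by-parts formulas are applied on the half-space $\mathcal{I} = \N \times \Z$ rather than on $\Z^2$. As in the whole-space case, I would decompose the update as $u^{n+1} = u^n - w^n + v^n$ on the interior indices, with $v^n$ and $w^n$ still given by the operator expressions \eqref{defvjkn'}--\eqref{defwjkn'}. The first key point is that the extrapolation condition \eqref{extrapolation} makes $D_{1,-}u^n$ and $\Delta_1 u^n$ vanish in the ghost layer $j=-1$ (indeed $D_{1,-}u_{0,k}^n = u_{0,k}^n - u_{-1,k}^n = 0$), so when summing by parts in the $x$-direction the boundary contributions at $j=0$ simplify considerably; in the $y$-direction there is no boundary at all and Fubini still applies freely. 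So the first step is to re-derive, on $\mathcal{I}$, the half-space analogues of Lemma \ref{lem2}: formulas like $\|D_{1,0}U\|^2 = \|D_{1,+}U\|^2 - \tfrac14\|\Delta_1 U\|^2 + (\text{boundary terms at } j=0)$, where the boundary terms are explicit quadratic expressions in $u_{0,k}^n$ and $u_{1,k}^n$ (equivalently in $u_{0,k}^n$ and $\Delta_2 u_{0,k}^n$). One must be scrupulous about which adjoint relations $D_{1,-}^* = -D_{1,+}$ still hold and which pick up a boundary term; this bookkeeping is the technical heart of the argument.

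With those boundary-corrected identities in hand, the second step is to redo the computation of $\|u^{n+1}\|_{\mathcal H}^2 - \|u^n\|_{\mathcal H}^2$. The orthogonality $\langle v^n; u^n\rangle = \langle v^n; w^n\rangle = 0$ used in \eqref{energie-cauchy} is no longer exact on the half-space: skew-selfadjointness of $D_{1,0}$ fails and produces a boundary term, while $D_{2,0}$ remains skew-selfadjoint. Collecting everything, one should obtain
\begin{equation*}
\|u^{n+1}\|_{\mathcal H}^2 - \|u^n\|_{\mathcal H}^2 = \underbrace{\big(\text{the interior quadratic form from Cauchy case}\big)}_{\le\,0\ \text{under}\ \eqref{CFLlaxwendroff}} + \underbrace{\mathcal{B}^n}_{\text{boundary terms}},
\end{equation*}
where $\mathcal{B}^n$ is a quadratic form in the boundary data $(u_{0,k}^n)_k$ and $(\Delta_2 u_{0,k}^n)_k$. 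The third step, and the one I expect to be the genuine obstacle, is to show that $\mathcal{B}^n \le -\tfrac{\lambda|a|}{2}\sum_k (u_{0,k}^n)^2 - (\mu b)^2\tfrac{(\lambda a)^2+(\mu b)^2}{16}\sum_k (\Delta_2 u_{0,k}^n)^2$. This is where the hypothesis $a<0$ enters crucially — it is exactly the outgoing sign that makes the dominant boundary term $\propto -\lambda|a|\sum_k(u_{0,k}^n)^2$ have the right sign — and one will need a Cauchy–Schwarz / Young argument to absorb the cross terms between $u_{0,k}^n$ and $\Delta_2 u_{0,k}^n$, controlled by the CFL condition $(\lambda a)^2 + (\mu b)^2 \le 1/2$. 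Getting the constants to close (in particular matching the clean $\tfrac{\lambda|a|}{2}$ and $\tfrac{(\mu b)^2((\lambda a)^2+(\mu b)^2)}{16}$ coefficients in the statement, rather than merely some positive constants) is the delicate part; it presumably requires being a little clever about how the stabilizer's boundary contributions are grouped with the first-order part's boundary contributions, much as Steps 1 and 2 in the proof of Proposition \ref{prop1} regrouped terms to make the signs manifest.

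Once the per-step inequality is established, the final statement is immediate: summing over $n = 0, 1, \dots, N$ telescopes the $\|u^n\|_{\mathcal H}^2$ terms, yielding $\|u^{N+1}\|_{\mathcal H}^2 + \tfrac{\lambda|a|}{2}\sum_{n=0}^N\sum_k (u_{0,k}^n)^2 + (\mu b)^2\tfrac{(\lambda a)^2+(\mu b)^2}{16}\sum_{n=0}^N\sum_k (\Delta_2 u_{0,k}^n)^2 \le \|u^0\|_{\mathcal H}^2$, and then taking the supremum over $N$ and using $\|u^{N+1}\|_{\mathcal H}^2 \ge 0$ gives the claimed bound with the factor $2$ on the right (in fact one even gets it with factor $1$, but the stated version with $2$ is what will be convenient in Section \ref{section4}). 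I would also note, as the authors do in the introduction, that the appearance of the full trace term $\tfrac{\lambda|a|}{2}\sum_k(u_{0,k}^n)^2$ on the left is the discrete manifestation of the Uniform Kreiss–Lopatinskii condition for this outgoing problem.
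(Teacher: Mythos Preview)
Your outline is essentially the paper's proof: the same decomposition $u^{n+1}=u^n-w^n+v^n$, the same programme of re-deriving the whole-space identities on $\mathcal{I}$ with boundary corrections, the same role for $a<0$ in giving the dominant trace term its sign, and the same Young-inequality absorption of the cross term under the CFL condition. Two small remarks. First, you slightly overestimate the boundary bookkeeping: because $u\in\mathcal{H}$ forces $D_{1,-}u_{0,k}=0$, the paper shows (its Lemma \ref{lem5}) that the key identities $\|D_{1,0}U\|^2=\|D_{1,+}U\|^2-\tfrac14\|\Delta_1U\|^2$, $\langle U;\Delta_1U\rangle=-\|D_{1,+}U\|^2$, and $\langle D_{1,0}U;\Delta_1U\rangle=0$ hold on $\mathcal{I}$ \emph{without} boundary corrections, so Lemma \ref{lem1} carries over verbatim; the genuine boundary contributions are localized in $\langle u;v\rangle$, $\langle v;w\rangle$, and one place inside the $\|w\|^2$ estimate (the analogue of $B_2$), and the latter even comes with a \emph{favorable} sign. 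Second, your parenthetical ``in fact one even gets it with factor $1$'' is not quite right: telescoping gives $\sup_n\|u^n\|^2\le\|u^0\|^2$ and $\sum_n(\cdots)\le\|u^0\|^2$ separately, so their sum is bounded by $2\|u^0\|^2$, and the $2$ is genuinely needed in the stated form.
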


\noindent Actually, we could recover some dissipation term (as in the right hand side of \eqref{stabilite2D-1}) if \eqref{CFLlaxwendroff} were satisfied with 
a strict inequality, but we rather focus here on the fact that we get a trace estimate for the numerical solution (and some extra control of the tangential 
Laplacian of the trace provided that the tangential velocity $b$ is nonzero). The trace estimate provided by Theorem \ref{thm1} strongly suggests that 
the Uniform Kreiss-Lopatinskii Condition is satisfied\footnote{The verification of the Uniform Kreiss-Lopatinskii Condition requires considering nonhomogeneous 
boundary conditions, which would give rise to many additional terms.}. The rest of this section is devoted to the proof of Theorem \ref{thm1}. We follow 
the lines of the proofs of Lemma \ref{lem1} and Proposition \ref{prop1}.

\subsection{Proof of Theorem \ref{thm1}}
\label{paragraphpreuve}

We extend the method of Section \ref{section2} to the numerical scheme defined by \eqref{LW} on $\mathcal{I}$ with the numerical boundary condition 
\eqref{extrapolation}. In Section \ref{section2}, the starting point was the energy balance \eqref{energie-cauchy} that used the orthogonality in $\ell^2 
(\Z^2;\R)$ of $v^n$ with respect to both $u^n$ and $w^n$. We first clarify how this extends to the half-space problem we consider here.

\begin{lemma}
\label{lem4}
Let $a<0$ and $b \in \R$. Let $u^n \in \mathcal{H}$, and let the sequences $v^n,w^n$ be defined on the set of interior indices $\mathcal{I}$ by \eqref{defvwn'}. 
Then there holds:
\begin{subequations}
\label{termes-antisym}
\begin{align}
2 \, \langle u^n \, ; \, v^n \rangle_{\ell^2(\mathcal{I})} \, = \, & \, - \, \lambda \, |\, a \,| \, \sum_{k \in \Z} \, (u_{0,k}^n)^2 \, ,\label{lem4-antisym1} \\
2 \, \langle v^n \, ; \, w^n \rangle_{\ell^2(\mathcal{I})} \, = \, & \, - \, \dfrac{\lambda \, |\, a \,| \, (\mu \, b)^2}{2} \, 
\sum_{k \in \Z} \, u_{0,k}^n \, \Delta_2 \, u_{0,k+1}^n \, .\label{lem4-antisym2}
\end{align}
\end{subequations}
\end{lemma}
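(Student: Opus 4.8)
The plan is to transplant the whole-space computation of Section~\ref{section2}: there $v^n$ was orthogonal to both $u^n$ and $w^n$ because $v^n$ gathers the skew-selfadjoint operators applied to $u^n$ and $w^n$ the selfadjoint ones, whereas on the half-space the underlying discrete integrations by parts pick up boundary contributions at $j=0$, and Lemma~\ref{lem4} merely identifies them. The first step is to record the summation-by-parts identities on $\ell^2(\mathcal{I};\R)$. Since the tangential index $k$ runs over all of $\Z$, the operators $D_{2,0}$ and $\Delta_2$ keep their whole-space (anti-)symmetry with \emph{no} boundary term: $\langle D_{2,0}f\,;g\rangle_{\ell^2(\mathcal{I})}=-\langle f\,;D_{2,0}g\rangle_{\ell^2(\mathcal{I})}$ and $\langle \Delta_2 f\,;g\rangle_{\ell^2(\mathcal{I})}=\langle f\,;\Delta_2 g\rangle_{\ell^2(\mathcal{I})}$ for all $f,g$; I shall call this \emph{tangential symmetry}. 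In the normal direction an index shift gives
\[
\langle f\,;D_{1,+}g\rangle_{\ell^2(\mathcal{I})}=-\langle D_{1,-}f\,;g\rangle_{\ell^2(\mathcal{I})}-\sum_{k\in\Z}f_{-1,k}\,g_{0,k}
\]
together with its analogue with $\pm$ exchanged, hence
\[
\langle f\,;D_{1,0}g\rangle_{\ell^2(\mathcal{I})}+\langle D_{1,0}f\,;g\rangle_{\ell^2(\mathcal{I})}=-\dfrac{1}{2}\sum_{k\in\Z}\big(f_{-1,k}\,g_{0,k}+f_{0,k}\,g_{-1,k}\big)\,,
\]
and, by composition, a two-sided identity for $\Delta_1=D_{1,+}D_{1,-}$ whose boundary terms involve only the traces and first normal differences at $j=0,-1$. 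One also notes that \eqref{extrapolation} is preserved by any tangential difference operator, so if $u\in\mathcal{H}$ then $(\Phi u)_{-1,k}=(\Phi u)_{0,k}$ for every $\Phi$ built from $D_{2,\pm},\Delta_2,\dots$

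For \eqref{lem4-antisym1} one expands $2\,\langle u^n\,;v^n\rangle_{\ell^2(\mathcal{I})}=-2\,\lambda a\,\langle u^n\,;D_{1,0}u^n\rangle_{\ell^2(\mathcal{I})}-2\,\mu b\,\langle u^n\,;D_{2,0}u^n\rangle_{\ell^2(\mathcal{I})}$. The second scalar product vanishes by tangential symmetry; the first equals $-\tfrac12\sum_k u^n_{-1,k}u^n_{0,k}=-\tfrac12\sum_k(u^n_{0,k})^2$ by the $D_{1,0}$-identity above and \eqref{extrapolation}. Since $a<0$ this is exactly \eqref{lem4-antisym1}.

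For \eqref{lem4-antisym2} one expands $2\,\langle v^n\,;w^n\rangle_{\ell^2(\mathcal{I})}$ using \eqref{defvjkn'}--\eqref{defwjkn'}; this produces eight scalar products, six of which vanish and two of which survive. The vanishing ones go as follows. First, $\langle D_{1,0}u^n\,;\Delta_1 u^n\rangle_{\ell^2(\mathcal{I})}=0$: by the pointwise identity $(U_{j+1}-U_{j-1})(U_{j+1}-2U_j+U_{j-1})=(D_{1,+}U)_j^2-(D_{1,-}U)_j^2$ the sum over $j\ge0$ telescopes to $-\sum_k(u^n_{0,k}-u^n_{-1,k})^2$, which is zero by \eqref{extrapolation}; the same telescoping applied to a tangential difference of $u^n$ disposes of $\langle D_{1,0}u^n\,;\Delta_1\Delta_2 u^n\rangle_{\ell^2(\mathcal{I})}$ once the tangential factors are commuted through. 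Next, $\langle D_{1,0}u^n\,;D_{1,0}D_{2,0}u^n\rangle_{\ell^2(\mathcal{I})}=\langle g\,;D_{2,0}g\rangle_{\ell^2(\mathcal{I})}=0$ with $g=D_{1,0}u^n$, and $\langle D_{2,0}u^n\,;\Delta_2 u^n\rangle_{\ell^2(\mathcal{I})}=0$, both by tangential symmetry. Finally $\langle D_{2,0}u^n\,;\Delta_1 u^n\rangle_{\ell^2(\mathcal{I})}$ and $\langle D_{2,0}u^n\,;\Delta_1\Delta_2 u^n\rangle_{\ell^2(\mathcal{I})}$ vanish by combining tangential symmetry with the two-sided $\Delta_1$-identity, all boundary terms cancelling because $u^n_{-1,\cdot}=u^n_{0,\cdot}$.

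The two surviving scalar products are $\langle D_{1,0}u^n\,;\Delta_2 u^n\rangle_{\ell^2(\mathcal{I})}$ (from pairing the $D_{1,0}$-term of $v^n$ with the $\Delta_2$-term of $w^n$) and $\langle D_{2,0}u^n\,;D_{1,0}D_{2,0}u^n\rangle_{\ell^2(\mathcal{I})}$ (from the $D_{2,0}$-term of $v^n$ and the $D_{1,0}D_{2,0}$-term of $w^n$). Moving $\Delta_2$ across in $k$ and then applying the $D_{1,0}$-identity (using that $\Delta_2$ preserves \eqref{extrapolation}) gives $\langle D_{1,0}u^n\,;\Delta_2 u^n\rangle_{\ell^2(\mathcal{I})}=-\tfrac12\sum_k u^n_{0,k}\,(\Delta_2 u^n)_{0,k}$, and the same manipulations give $\langle D_{2,0}u^n\,;D_{1,0}D_{2,0}u^n\rangle_{\ell^2(\mathcal{I})}=\tfrac12\sum_k u^n_{0,k}\,(D_{2,0}^2 u^n)_{0,k}$. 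Inserting these into $2\,\langle v^n\,;w^n\rangle_{\ell^2(\mathcal{I})}$ with the coefficients read off from \eqref{defvjkn'}--\eqref{defwjkn'}, using the elementary three-point identity $D_{2,0}^2=\Delta_2+\tfrac14\Delta_2^2$, and performing one tangential summation by parts in $k$ --- which, with $p_k:=(D_{2,+}u^n)_{0,k}$, reads $\sum_k p_k^2-\tfrac12\sum_k(p_k-p_{k-1})^2=\sum_k p_k p_{k-1}=-\sum_k u^n_{0,k}\,(\Delta_2 u^n)_{0,k+1}$ --- collapses the combination to $-\tfrac12\,\lambda|a|\,(\mu b)^2\sum_k u^n_{0,k}\,(\Delta_2 u^n)_{0,k+1}$ (here using $a<0$), which is \eqref{lem4-antisym2}. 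I expect this last stage to be the only real obstacle: one must keep track of the fact that several scalar products that vanish for free on $\Z^2$ are zero here \emph{only} because \eqref{extrapolation} kills their boundary term, and then combine and reshape --- via $D_{2,0}^2=\Delta_2+\tfrac14\Delta_2^2$ and a tangential Abel transform --- the two genuinely nontrivial contributions into the precise shifted trace quantity $\sum_k u^n_{0,k}\,\Delta_2 u^n_{0,k+1}$ stated in the lemma.
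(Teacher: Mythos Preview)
Your proof is correct and follows essentially the same route as the paper: both expand $2\langle v^n;w^n\rangle_{\ell^2(\mathcal{I})}$ into eight scalar products, dispose of the same six via tangential (skew-)selfadjointness combined with the telescoping/$\Delta_1$-identities for sequences in $\mathcal{H}$, and then evaluate the two surviving boundary contributions. The only cosmetic difference is the final simplification---the paper writes the two trace terms as $\tfrac{\alpha\beta^2}{2}\sum_k(D_{2,+}u_{0,k})^2-\alpha\beta^2\sum_k(D_{2,0}u_{0,k})^2$ and expands $D_{2,0}=\tfrac12(D_{2,+}+D_{2,-})$, whereas you use the operator identity $D_{2,0}^2=\Delta_2+\tfrac14\Delta_2^2$ and an Abel transform in $k$---but both collapse to $\tfrac{\alpha\beta^2}{2}\sum_k u_{0,k}\,\Delta_2 u_{0,k+1}$ in the same way.
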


\noindent The proof of Lemma \ref{lem4} will use the following result which we state independently for the sake of clarity. Lemma \ref{lem5} is an extension 
of the properties $D_{1,0}^*=-D_{1,0}$ and $\Delta_1^*=\Delta_1$ that we used in the whole space $\Z^2$.

\begin{lemma}
\label{lem5}
Let $U,V \in \mathcal{H}$. Then there hold the relations:
\begin{subequations}
\label{relations-lem5}
\begin{align}
\langle U \, ; \, \Delta_1 \, V \rangle_{\ell^2(\mathcal{I})} \, = \, & \, \langle \Delta_1 \, U \, ; \, V \rangle_{\ell^2(\mathcal{I})} \, ,\label{lem5-1} \\
\langle D_{1,0} \, U \, ; \, \Delta_1 \, U \rangle_{\ell^2(\mathcal{I})} \, = \, & \, 0 \, ,\label{lem5-2} \\
\langle U \, ; \, \Delta_1 \, U \rangle_{\ell^2(\mathcal{I})} \, = \, & \, - \, \| \, D_{1,+} \, U \, \|_{\ell^2(\mathcal{I})}^2 \, ,\label{lem5-3} \\
\|\, D_{1,0} \, U \, \|_{\ell^2(\mathcal{I})} \, = \, & \, \| \, D_{1,+} \, U \, \|_{\ell^2(\mathcal{I})}^2 \, - \, \dfrac{1}{4} \, 
\| \, \Delta_1 \, U \, \|_{\ell^2(\mathcal{I})}^2 \, .\label{lem5-4}
\end{align}
\end{subequations}
\end{lemma}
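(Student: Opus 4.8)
The plan is to reduce every identity in Lemma \ref{lem5} to a one-dimensional computation in the normal index $j$, treating the tangential index $k$ as a spectator and summing over it at the end by Fubini, exactly as in the proof of Lemma \ref{lem2}. All four identities are the half-space analogues of formulas that hold on $\ell^2(\Z^2;\R)$ by discrete summation by parts; the only new feature is a boundary contribution at the interface $j=0$, and the whole point is that the extrapolation condition \eqref{extrapolation} makes it vanish. Concretely, for $U \in \mathcal{H}$ one has $(D_{1,+}U)_{-1,k} = U_{0,k} - U_{-1,k} = 0$ for every $k$, so whenever a summation by parts in $j$ over $\mathcal{I} = \N \times \Z$ produces a boundary term proportional to $(D_{1,+}U)_{-1,k}$ (equivalently to $U_{0,k} - U_{-1,k}$), that term is zero; the contribution at $j = +\infty$ is likewise zero because $U \in \ell^2$.

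First I would prove the ``master identity''
\[
\langle U \, ; \, \Delta_1 \, V \rangle_{\ell^2(\mathcal{I})} \, = \, - \, \langle D_{1,+} \, U \, ; \, D_{1,+} \, V \rangle_{\ell^2(\mathcal{I})} \qquad (U, V \in \mathcal{H}) \, ,
\]
by writing $\Delta_1 = D_{1,+} - D_{1,-}$ and performing Abel's transform in $j$: in $\sum_{j \ge 0} U_{j,k}\,(D_{1,-}V)_{j,k}$ the term $j=0$ equals $U_{0,k}\,(V_{0,k} - V_{-1,k}) = 0$, and reindexing the remaining sum by $j \mapsto j-1$ turns it into $\sum_{j \ge 0} U_{j+1,k}\,(D_{1,+}V)_{j,k}$, whence the claim after collecting terms. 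Since the right-hand side of the master identity is symmetric in $U$ and $V$, identity \eqref{lem5-1} follows at once, and \eqref{lem5-3} is the special case $V = U$.

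For \eqref{lem5-2} I would use the pointwise factorization, valid for any three real numbers,
\[
\tfrac12 \, (U_{j+1,k} - U_{j-1,k})(U_{j+1,k} - 2\,U_{j,k} + U_{j-1,k}) \, = \, \tfrac12 \big( (D_{1,+}U)_{j,k}^2 \, - \, (D_{1,+}U)_{j-1,k}^2 \big) \, ,
\]
which one checks by setting $p = (D_{1,+}U)_{j,k}$ and $q = (D_{1,+}U)_{j-1,k}$, so that the two differences above are $p+q$ and $p-q$. Summing over $j \ge 0$ telescopes to $\tfrac12 \big( \lim_{J\to\infty}(D_{1,+}U)_{J,k}^2 - (D_{1,+}U)_{-1,k}^2 \big) = 0$, and summing over $k$ gives \eqref{lem5-2}. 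Finally, \eqref{lem5-4} is obtained exactly as \eqref{lem2-formule1D} in the whole-space case: summing the algebraic identity \eqref{equationalgebrique} over $(j,k) \in \mathcal{I}$ yields
\[
\| \, D_{1,0} \, U \, \|_{\ell^2(\mathcal{I})}^2 \, + \, \tfrac14 \, \| \, \Delta_1 \, U \, \|_{\ell^2(\mathcal{I})}^2 \, = \, \tfrac12 \, \| \, D_{1,+}U \, \|_{\ell^2(\mathcal{I})}^2 \, + \, \tfrac12 \sum_{(j,k)\in\mathcal{I}} (D_{1,+}U)_{j-1,k}^2 \, ,
\]
and the last sum equals $\| \, D_{1,+}U \, \|_{\ell^2(\mathcal{I})}^2$ because the additional index $j=-1$ contributes $(D_{1,+}U)_{-1,k}^2 = 0$; rearranging gives \eqref{lem5-4} (with the square on the left-hand side, as in \eqref{lem2-formule1D}).

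I do not expect a genuine obstacle here: the content is elementary discrete summation by parts, and the proof of Lemma \ref{lem2} already provides the template. The only points requiring care are keeping track of the single ghost layer $j = -1$ and checking in each manipulation that the interface term is precisely the one annihilated by \eqref{extrapolation}, together with the routine justification that all rearrangements are legitimate because $U, V \in \ell^2$, so the relevant series converge absolutely.
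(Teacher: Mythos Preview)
Your proof is correct and follows essentially the same route as the paper's: discrete summation by parts in $j$, with the boundary contribution at $j=-1$ annihilated by the extrapolation condition \eqref{extrapolation}. The only (minor) organizational difference is that you first establish the symmetric ``master identity'' $\langle U;\Delta_1 V\rangle_{\ell^2(\mathcal{I})} = -\langle D_{1,+}U; D_{1,+}V\rangle_{\ell^2(\mathcal{I})}$ and read off both \eqref{lem5-1} and \eqref{lem5-3} from it, whereas the paper proves these two relations separately; your arguments for \eqref{lem5-2} and \eqref{lem5-4} coincide with the paper's.
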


\begin{proof}[Proof of Lemma \ref{lem5}]
Let $U,V \in \mathcal{H}$. We compute:
$$
\langle U \, ; \, \Delta_1 \, V \rangle_{\ell^2(\mathcal{I})} \, - \, \langle \Delta_1 \, U \, ; \, V \rangle_{\ell^2(\mathcal{I})} \, = \, 
\sum_{j \ge 0,k \in \Z} \, U_{j,k} \, V_{j+1,k} \, - \, U_{j-1,k} \, V_{j,k} \, + \, \sum_{j \ge 0,k \in \Z} \, U_{j,k} \, V_{j-1,k} \, - \, U_{j+1,k} \, V_{j,k} \, .
$$
By performing a change of indices, we find that each sum on the right hand side is telescopic and we get:
$$
\langle U \, ; \, \Delta_1 \, V \rangle_{\ell^2(\mathcal{I})} \, - \, \langle \Delta_1 \, U \, ; \, V \rangle_{\ell^2(\mathcal{I})} \, = \, 
- \, \sum_{k \in \Z} \, U_{-1,k} \, V_{0,k} \, + \, \sum_{k \in \Z} \, U_{0,k} \, V_{-1,k} \, .
$$
The latter right hand side vanishes for $U,V \in \mathcal{H}$ since we have $U_{-1,k} =U_{0,k}$ and $V_{-1,k} =V_{0,k}$ for any $k \in \Z$. 
This completes the proof of \eqref{lem5-1}.

We now prove \eqref{lem5-2}. Let $U \in \mathcal{H}$. For any $(j,k) \in \mathcal{I}$, we have:
\begin{align*}
(D_{1,0} \, U_{j,k}) \, (\Delta_1 \, U_{j,k}) \, &= \, \dfrac{1}{2} \, (D_{1,+} \, U_{j,k} \, + \, D_{1,-} \, U_{j,k}) \, (D_{1,+} \, U_{j,k} \, - \, D_{1,-} \, U_{j,k}) \\
&= \, \dfrac{1}{2} \, (D_{1,+} \, U_{j,k})^2 \, - \, \dfrac{1}{2} \, (D_{1,-} \, U_{j,k})^2 \, = \, \dfrac{1}{2} \, D_{1,+} \, \Big\{(D_{1,-} \, U_{j,k})^2 \Big\} \, .
\end{align*}
Summing with respect to $j \in \N$ and $k \in \Z$, we end up with:
$$
\langle D_{1,0} \, U \, ; \, \Delta_1 \, U \rangle_{\ell^2(\mathcal{I})} \, = \, - \, \dfrac{1}{2} \, \sum_{k \in \Z} \, (D_{1,-} \, U_{0,k})^2 
 \, = \, - \, \dfrac{1}{2} \, \sum_{k \in \Z} \, (U_{0,k} \, - \, U_{-1,k})^2 \, = \, 0 \, ,
$$
which completes the proof of \eqref{lem5-2}.

For \eqref{lem5-3}, we consider $U \in \mathcal{H}$. Then for $(j,k) \in \mathcal{J}$, we define $V_{j,k} :=D_{1,+}U_{j,k}$. We compute:
\begin{align*}
\langle U \, ; \, \Delta_1 \, U \rangle_{\ell^2(\mathcal{I})} \, = \, \sum_{(j,k) \in \mathcal{I}} \, U_{j,k} \, (V_{j,k} \, - \, V_{j-1,k}) 
 \, &= \, \sum_{(j,k) \in \mathcal{I}} \, U_{j,k} \, V_{j,k} \, - \, \sum_{(j,k) \in \mathcal{J}} \, U_{j+1,k} \, V_{j,k} \\
 &= \, - \, \sum_{(j,k) \in \mathcal{I}} \, V_{j,k}^2 \, - \, \sum_{k \in \Z} \, U_{0,k} \, V_{-1,k} \, ,
\end{align*}
and the conclusion follows because $V_{-1,k}$ vanishes for any $k \in \Z$ (since $U$ belongs to $\mathcal{H}$).

Eventually, for the relation \eqref{lem5-4}, we start from the algebraic relation (see \eqref{equationalgebrique}):
$$
\dfrac{(U_{j+1,k} \, - \, U_{j-1,k})^2}{4} \, + \, \dfrac{(U_{j+1,k} \, - \, 2 \, U_{j,k} \, + \, U_{j-1,k})^2}{4} 
\, = \, \dfrac{1}{2} \, \big( U_{j+1,k} \, - \, U_{j,k} \big)^2 \, + \, \dfrac{1}{2} \, \big( U_{j,k} \, - \, U_{j-1,k} \big)^2 \, ,
$$
we sum with respect to $k \in \Z$ and $j \in \N$, and obtain (using $U \in \mathcal{H}$):
$$
\|\, D_{1,0} \, U \, \|_{\ell^2(\mathcal{I})} \, + \, \dfrac{1}{4} \, \| \, \Delta_1 \, U \, \|_{\ell^2(\mathcal{I})}^2 \, = \, \| \, D_{1,+} \, U \, \|_{\ell^2(\mathcal{I})}^2 
\, + \, \dfrac{1}{2} \, \sum_{k \in \Z} \, \big( \underbrace{U_{0,k} \, - \, U_{-1,k}}_{= \, 0} \big)^2 \, ,
$$
which completes the proof of \eqref{lem5-4}.
\end{proof}

\noindent Let us now proceed with the proof of Lemma \ref{lem4}.

\begin{proof}[Proof of Lemma \ref{lem4}]
Let $u^n \in \mathcal{H}$, and let the sequences $v^n,w^n$ be defined on the set of indices $\mathcal{I}$ by \eqref{defvwn'}. We omit the superscript 
$n$ below for the sake of clarity. For $(j,k) \in \mathcal{I}$, we have:
$$
2 \, u_{j,k} \, (D_{1,0} \, u_{j,k}) \, = \, u_{j,k} \, (u_{j+1,k} \, - \, u_{j-1,k}) \, = \, D_{1,+} \, \Big\{ u_{j-1,k} \, u_{j,k} \Big\} \, .
$$
Arguing similarly with $D_{2,0}$ rather than $D_{1,0}$, we get the relation:
$$
\forall \, (j,k) \in \mathcal{I} \, ,\quad 2 \, u_{j,k} \, v_{j,k} \, = \, 
- \, \lambda \, a \, D_{1,+} \, \Big\{u_{j-1,k} \, u_{j,k} \Big\} \, - \, \mu \, b \, D_{2,+} \, \Big\{u_{j,k-1} \, u_{j,k} \Big\} \, .
$$
Summing with respect to $(j,k) \in \mathcal{I}$, we end up with:
$$
2 \, \langle u \, ; \, v \rangle_{\ell^2(\mathcal{I})} \, = \, \lambda \, a \, \sum_{k \in \Z} \, u_{-1,k} \, u_{0,k}  \, = \, \lambda \, a \, \sum_{k \in \Z} \, (u_{0,k})^2 \, ,
$$
where we have used $u_{-1,k}=u_{0,k}$. This gives \eqref{lem4-antisym1} since $a$ is negative.

We now turn to the proof of \eqref{lem4-antisym2} and keep omitting the $n$ superscript for the sake of clarity. We also use the notation $\alpha := \lambda \, a$ 
and $\beta := \mu \, b$. The definitions \eqref{defvwn'} allow us to expand the scalar product of $v$ with $w$ and get:
\begin{align}
2 \, \langle v \, ; \, w \rangle_{\ell^2(\mathcal{I})} \, = \, & \, \alpha^3 \, {\color{ForestGreen} \langle D_{1,0} \, u \, ; \, \Delta_1 \, u \rangle_{\ell^2(\mathcal{I})}} 
\, + \, \beta^3 \, {\color{blue} \langle D_{2,0} \, u \, ; \, \Delta_2 \, u \rangle_{\ell^2(\mathcal{I})}} \notag \\
&\, + \, \alpha \, \beta^2 \, \langle D_{1,0} \, u \, ; \, \Delta_2 \, u \rangle_{\ell^2(\mathcal{I})} 
\, + \, \alpha^2 \, \beta \, \langle D_{2,0} \, u \, ; \, \Delta_1 \, u \rangle_{\ell^2(\mathcal{I})} \label{decompositionvw-1} \\
&\, + \, 2 \, \alpha \, \beta^2 \, \langle D_{2,0} \, u \, ; \, D_{1,0} \, D_{2,0} \, u \rangle_{\ell^2(\mathcal{I})} 
\, + \, 2 \, \alpha^2 \, \beta \, {\color{blue} \langle D_{1,0} \, u \, ; \, D_{2,0} \, D_{1,0} \, u \rangle_{\ell^2(\mathcal{I})}} \notag \\
&\, - \, \dfrac{\alpha^2 + \beta^2}{4} \, \langle \Delta_1 \, \Delta_2 \, u \, ; \, \alpha \, D_{1,0} \, u \, + \, \beta \, D_{2,0} \, u \rangle_{\ell^2(\mathcal{I})} \, .\notag
\end{align}

Let us start with the two blue terms on the right hand side of \eqref{decompositionvw-1}. By applying Fubini's theorem, we can compute the sum over 
$\mathcal{I}$ by first summing with respect to $k \in \Z$ and then summing with respect to $j \in \N$. However, the operator $D_{2,0}$, resp. $\Delta_2$, 
is skew-selfadjoint, resp. selfadjoint, for the $\ell^2(\Z_k)$ scalar product (for any fixed $j$). For instance, given any $j \in \N$, there holds:
$$
\sum_{k \in \Z} \, (D_{2,0} \, u_{j,k}) \, (\Delta_2 \, u_{j,k}) \, = \, 0 \, ,
$$
since $j$ is a parameter and the computation is merely one-dimensional. Applying this argument, we obtain:
$$
\langle D_{2,0} \, u \, ; \, \Delta_2 \, u \rangle_{\ell^2(\mathcal{I})} \, = \, 
\langle D_{1,0} \, u \, ; \, D_{2,0} \, D_{1,0} \, u \rangle_{\ell^2(\mathcal{I})} \, = \, 0 \, ,
$$
that is, the two blue terms on the right hand side of \eqref{decompositionvw-1} vanish. Moreover, Lemma \ref{lem5} shows that the green term on the right 
hand side of \eqref{decompositionvw-1} also vanishes. We thus obtain the simplification:
\begin{align}
2 \, \langle v \, ; \, w \rangle_{\ell^2(\mathcal{I})} \, = \, & \,  \alpha \, \beta^2 \, \langle D_{1,0} \, u \, ; \, \Delta_2 \, u \rangle_{\ell^2(\mathcal{I})} 
\, + \, \alpha^2 \, \beta \, {\color{Magenta} \langle D_{2,0} \, u \, ; \, \Delta_1 \, u \rangle_{\ell^2(\mathcal{I})}} \notag \\
& \, + \, 2 \, \alpha \, \beta^2 \, \langle D_{2,0} \, u \, ; \, D_{1,0} \, D_{2,0} \, u \rangle_{\ell^2(\mathcal{I})} \label{decompositionvw-2} \\
& \, - \, \dfrac{\alpha^2 + \beta^2}{4} \, \alpha \, \langle \Delta_1 \, \Delta_2 \, u \, ; \, D_{1,0} \, u \rangle_{\ell^2(\mathcal{I})} 
\, - \, \dfrac{\alpha^2 + \beta^2}{4} \, \beta \, {\color{Magenta} \langle \Delta_1 \, \Delta_2 \, u \, ; \, D_{2,0} \, u \rangle_{\ell^2(\mathcal{I})}} \, .\notag
\end{align}
Let us now look at the pink terms in \eqref{decompositionvw-2}. Since $u$ belongs to $\mathcal{H}$, we also have $D_{2,0} \, u \in \mathcal{H}$ because 
$D_{2,0}$ acts tangentially with respect to the numerical boundary. Applying Lemma \ref{lem5}, we thus have:
$$
\langle D_{2,0} \, u \, ; \, \Delta_1 \, u \rangle_{\ell^2(\mathcal{I})} \, = \, \langle D_{2,0} \, \Delta_1 \, u \, ; \, u \rangle_{\ell^2(\mathcal{I})} \, .
$$
Then summing first with respect to $k \in \Z$, we can use the skew-selfadjointness of $D_{2,0}$ and get:
$$
\langle D_{2,0} \, u \, ; \, \Delta_1 \, u \rangle_{\ell^2(\mathcal{I})} \, = \, - \, \langle \Delta_1 \, u \, ; \, D_{2,0} \, u \rangle_{\ell^2(\mathcal{I})} \, ,
$$
which means that the first pink term on the right hand side of \eqref{decompositionvw-2} vanishes. An entirely similar argument shows that the second 
pink term on the right hand side of \eqref{decompositionvw-2} also vanishes, and we are left with:
\begin{align}
2 \, \langle v \, ; \, w \rangle_{\ell^2(\mathcal{I})} \, = \, & \,  \alpha \, \beta^2 \, \langle D_{1,0} \, u \, ; \, \Delta_2 \, u \rangle_{\ell^2(\mathcal{I})} 
\, + \, 2 \, \alpha \, \beta^2 \, \langle D_{2,0} \, u \, ; \, D_{1,0} \, D_{2,0} \, u \rangle_{\ell^2(\mathcal{I})} \label{decompositionvw-3} \\
& \, - \, \dfrac{\alpha^2 + \beta^2}{4} \, \alpha \, {\color{Orange} \langle \Delta_1 \, \Delta_2 \, u \, ; \, D_{1,0} \, u \rangle_{\ell^2(\mathcal{I})}} \, .\notag
\end{align}
We now look at the orange term on the right hand side of \eqref{decompositionvw-3}. Summing first with respect to $k \in \Z$, and using $\Delta_2 =
-(D_{2,+})^* \, D_{2,+}$ in $\ell^2(\Z_k)$, we obtain:
\begin{align*}
\langle \Delta_1 \, \Delta_2 \, u \, ; \, D_{1,0} \, u \rangle_{\ell^2(\mathcal{I})} \, = \, - \, 
\langle \Delta_1 \, D_{2,+} \, u \, ; \, D_{1,0} \, D_{2,+} \, u \rangle_{\ell^2(\mathcal{I})} \, &= \, - \, \dfrac{1}{2} \, \sum_{j \ge 0,k \in \Z} \, 
D_{1,+} \, \Big\{(D_{1,-} \, D_{2,+} \, u_{j,k})^2 \Big\} \\
&= \, \dfrac{1}{2} \, \sum_{k \in \Z} \, \big( D_{2,+} \, u_{0,k} \, - \, D_{2,+} \, u_{-1,k} \big)^2 \, = \, 0 \, ,
\end{align*}
where we used in the end that $u$ belongs to $\mathcal{H}$ (hence $u_{0,k}=u_{-1,k}$ for any $k \in \Z$, which implies that $D_{2,+} \, u_{0,k} = 
D_{2,+} \, u_{-1,k}$ for any $k \in \Z$). At this stage, we have obtained the (much simplified) expression:
\begin{equation}
\label{decompositionvw-4}
2 \, \langle v \, ; \, w \rangle_{\ell^2(\mathcal{I})} \, = \, \alpha \, \beta^2 \, \langle D_{1,0} \, u \, ; \, \Delta_2 \, u \rangle_{\ell^2(\mathcal{I})} 
\, + \, 2 \, \alpha \, \beta^2 \, \langle D_{2,0} \, u \, ; \, D_{1,0} \, D_{2,0} \, u \rangle_{\ell^2(\mathcal{I})} \, ,
\end{equation}
and it only remains to compute the two scalar products on the right hand side (these will give boundary terms that contribute to \eqref{lem4-antisym2}).

For any sequence $V \in \ell^2(\mathcal{J};\R)$, we have:
\begin{equation}
\label{expressio-ipp}
2 \, \langle V \, ; \, D_{1,0} \, V \rangle_{\ell^2(\mathcal{I})} \, = \, \sum_{j \ge 0,k \in \Z} \, D_{1,+} \, \Big\{V_{j-1,k} \, V_{j,k} \Big\} 
\, = \, - \, \sum_{k \in \Z} \, V_{-1,k} \, V_{0,k} \, .
\end{equation}
Going back to \eqref{decompositionvw-4}, we first sum with respect to $k \in \Z$ to get the expression:
$$
2 \, \langle v \, ; \, w \rangle_{\ell^2(\mathcal{I})} \, = \, - \, \alpha \, \beta^2 \, \langle D_{1,0} \, D_{2,+} \, u \, ; \, D_{2,+} \, u \rangle_{\ell^2(\mathcal{I})} 
\, + \, 2 \, \alpha \, \beta^2 \, \langle D_{2,0} \, u \, ; \, D_{1,0} \, D_{2,0} \, u \rangle_{\ell^2(\mathcal{I})} \, ,
$$
and we then use \eqref{expressio-ipp} to get:
$$
2 \, \langle v \, ; \, w \rangle_{\ell^2(\mathcal{I})} \, = \, \dfrac{\alpha \, \beta^2}{2} \, \sum_{k \in \Z} \, (D_{2,+} \, u_{-1,k}) \, (D_{2,+} \, u_{0,k}) 
\, - \, \alpha \, \beta^2 \, \sum_{k \in \Z} \, (D_{2,0} \, u_{-1,k}) \, (D_{2,0} \, u_{0,k}) \, .
$$
Since $u$ belongs to $\mathcal{H}$, we thus get:
$$
2 \, \langle v \, ; \, w \rangle_{\ell^2(\mathcal{I})} \, = \, \dfrac{\alpha \, \beta^2}{2} \, \sum_{k \in \Z} \, (D_{2,+} \, u_{0,k})^2 
\, - \, \alpha \, \beta^2 \, \sum_{k \in \Z} \, (D_{2,0} \, u_{0,k})^2 \, .
$$
Using the relation $D_{2,0}=(D_{2,+}+D_{2,-})/2$ and expanding, we get:
$$
2 \, \langle v \, ; \, w \rangle_{\ell^2(\mathcal{I})} \, = \, - \, \dfrac{\alpha \, \beta^2}{2} \, \sum_{k \in \Z} \, D_{2,-} \, u_{0,k} \, D_{2,+} \, u_{0,k} \, 
\, = \, \dfrac{\alpha \, \beta^2}{2} \, \sum_{k \in \Z} \, u_{0,k} \, \underbrace{D_{2,+}^2 \, u_{0,k}}_{= \, \Delta_2 \, u_{0,k+1}} \, .
$$
This gives \eqref{lem4-antisym2} (recalling the definitions of $\alpha$ and $\beta$ and $\alpha<0$).
\end{proof}

\noindent A direct consequence of Lemma \ref{lem4} is the following result (which will be useful in our final argument for proving Theorem \ref{thm1}).

\begin{corollary}
\label{cor2}
Let $a<0$ and $b \in \R$ satisfy the stability condition \eqref{CFLlaxwendroff}. Let $u^n \in \mathcal{H}$, and let the sequences $v^n,w^n$ be defined 
on the set of interior indices $\mathcal{I}$ by \eqref{defvwn'}. Then there holds:
$$
2 \, \langle v^n \, ; \, w^n \rangle_{\ell^2(\mathcal{I})} \, - \, 2 \, \langle u^n \, ; \, v^n \rangle_{\ell^2(\mathcal{I})} \, \ge \, 
\dfrac{\lambda \, |\, a \,|}{2} \, \sum_{k \in \Z} \, (u_{0,k}^n)^2 \, - \, (\mu \, b)^2 \, \dfrac{(\lambda \, a)^2 +(\mu \, b)^2}{16} 
\, \sum_{k \in \Z} \, (\Delta_2 u_{0,k}^n)^2 \, .
$$
\end{corollary}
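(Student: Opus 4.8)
The plan is to combine the two identities of Lemma~\ref{lem4} and then to absorb the resulting boundary cross-term by an elementary quadratic estimate, the CFL condition entering only at the very last step. Writing $\alpha := \lambda \, a$, $\beta := \mu \, b$, and introducing the nonnegative boundary quantities
$$
S_0 \, := \, \sum_{k \in \Z} \, (u_{0,k}^n)^2 \, ,\qquad S_2 \, := \, \sum_{k \in \Z} \, (\Delta_2 \, u_{0,k}^n)^2 \, ,
$$
the relations \eqref{lem4-antisym1} and \eqref{lem4-antisym2} immediately give
$$
2 \, \langle v^n \, ; \, w^n \rangle_{\ell^2(\mathcal{I})} \, - \, 2 \, \langle u^n \, ; \, v^n \rangle_{\ell^2(\mathcal{I})} \, = \, \lambda \, |a| \, S_0 \, - \, \dfrac{\lambda \, |a| \, \beta^2}{2} \, \sum_{k \in \Z} \, u_{0,k}^n \, \Delta_2 \, u_{0,k+1}^n \, ,
$$
so that, after rearranging, the assertion of Corollary~\ref{cor2} becomes equivalent to the inequality
$$
\dfrac{\lambda \, |a|}{2} \, S_0 \, - \, \dfrac{\lambda \, |a| \, \beta^2}{2} \, \sum_{k \in \Z} \, u_{0,k}^n \, \Delta_2 \, u_{0,k+1}^n \, + \, \beta^2 \, \dfrac{\alpha^2 + \beta^2}{16} \, S_2 \, \ge \, 0 \, .
$$

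The next step is to bound the cross-term from below. Since a shift of the summation index $k$ leaves the $\ell^2(\Z)$ norm unchanged, the Cauchy-Schwarz inequality yields $\big| \sum_{k \in \Z} u_{0,k}^n \, \Delta_2 \, u_{0,k+1}^n \big| \le \sqrt{S_0} \, \sqrt{S_2}$, and hence it suffices to prove the scalar inequality $\frac{\lambda \, |a|}{2} \, S_0 + \beta^2 \, \frac{\alpha^2 + \beta^2}{16} \, S_2 \ge \frac{\lambda \, |a| \, \beta^2}{2} \, \sqrt{S_0 \, S_2}$. If $b = 0$ this is trivial, so one may assume $b \neq 0$; then the arithmetic-geometric mean inequality bounds the left-hand side below by $2 \, \sqrt{ \frac{\lambda \, |a|}{2} \cdot \beta^2 \, \frac{\alpha^2 + \beta^2}{16} } \, \sqrt{S_0 \, S_2}$, and the claim reduces, after dividing by the positive factor $\lambda \, |a| \, \beta^2$ and using $(\lambda \, |a|)^2 = \alpha^2$, to the purely algebraic bound $2 \, \lambda \, |a| \, \beta^2 \le \alpha^2 + \beta^2$.

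Finally I would invoke the CFL condition~\eqref{CFLlaxwendroff}: completing the square,
$$
\alpha^2 \, - \, 2 \, \lambda \, |a| \, \beta^2 \, + \, \beta^2 \, = \, \big( \lambda \, |a| \, - \, \beta^2 \big)^2 \, + \, \beta^2 \, \big( 1 \, - \, \beta^2 \big) \, ,
$$
and since $\beta^2 = (\mu \, b)^2 \le 1/2 < 1$ by \eqref{CFLlaxwendroff}, the right-hand side is nonnegative, which gives $2 \, \lambda \, |a| \, \beta^2 \le \alpha^2 + \beta^2$ and completes the proof. This corollary is essentially bookkeeping on top of Lemma~\ref{lem4}, so there is no genuine obstacle; the one point that deserves care is to tune the weight in the arithmetic-geometric mean step — equivalently, to pair the $S_2$-coefficient $\beta^2 (\alpha^2+\beta^2)/16$ against the $S_0$-coefficient $\lambda \, |a| / 2$ rather than splitting the cross-term evenly — so that precisely the bound $(\mu \, b)^2 \le 1/2$ furnished by \eqref{CFLlaxwendroff} is what is needed; an unweighted Young inequality would instead demand the false estimate $(\lambda \, a)^2 + (\mu \, b)^2 \ge 4 \, \lambda \, |a|$.
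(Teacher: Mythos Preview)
Your proof is correct and follows essentially the same route as the paper's: combine the two identities of Lemma~\ref{lem4}, apply Cauchy--Schwarz to the cross-term, use a weighted Young (equivalently, AM--GM) to reduce everything to the scalar inequality $2\,\lambda\,|a|\,\beta^2 \le \alpha^2+\beta^2$, and then verify that inequality under the CFL condition. The only cosmetic differences are that the paper bounds $2\,\langle v^n;w^n\rangle$ first and then adds \eqref{lem4-antisym1}, and that it derives the final scalar bound via the Young-type splitting $\lambda\,|a| \le \frac{1}{\sqrt{2}}\,\alpha^2 + \frac{1}{2\sqrt{2}}$ followed by $(\mu b)^2\le 1/2$, whereas your completing-the-square identity $(\lambda\,|a|-\beta^2)^2+\beta^2(1-\beta^2)\ge 0$ is arguably cleaner and in fact only needs $\beta^2<1$.
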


\begin{proof}
We apply the Cauchy-Schwarz and Young's inequalities on the right hand side of \eqref{lem4-antisym2} to get:
$$
2 \, \langle v^n \, ; \, w^n \rangle_{\ell^2(\mathcal{I})} \, \ge \, - \, \dfrac{(\lambda \, a)^2 \, (\mu \, b)^2}{(\lambda \, a)^2 \, + \, (\mu \, b)^2} 
\, \sum_{k \in \Z} \, (u_{0,k}^n)^2 \, - \, (\mu \, b)^2 \, \dfrac{(\lambda \, a)^2 \, + \, (\mu \, b)^2}{16} \, \sum_{k \in \Z} \, (\Delta_2 \, u_{0,k}^n)^2 \, .
$$
Using Young's inequality and \eqref{CFLlaxwendroff}, we also have:
$$
\lambda \, |\, a \, | \, (\mu \, b)^2 \, \le \, \dfrac{1}{\sqrt{2}} \, (\lambda \, a)^2 \, (\mu \, b)^2 \, + \, \dfrac{1}{2 \, \sqrt{2}} \, (\mu \, b)^2 
\, \le \, \dfrac{1}{2 \, \sqrt{2}} \, \big( (\lambda \, a)^2 \, + \, (\mu \, b)^2 \big) \, \le \, \dfrac{(\lambda \, a)^2 \, + \, (\mu \, b)^2}{2} \, ,
$$
which gives (this bound is far from being sharp but it is sufficient for our purpose):
$$
2 \, \langle v^n \, ; \, w^n \rangle_{\ell^2(\mathcal{I})} \, \ge \, - \, \dfrac{\lambda \, |\, a \, |}{2} \, \sum_{k \in \Z} \, (u_{0,k}^n)^2 
\, - \, (\mu \, b)^2 \, \dfrac{(\lambda \, a)^2 \, + \, (\mu \, b)^2}{16} \, \sum_{k \in \Z} \, (\Delta_2 \, u_{0,k}^n)^2 \, .
$$
Combining with \eqref{lem4-antisym1}, the claim of Corollary \ref{cor2} follows.
\end{proof}

\noindent We now state and prove the analogue of Lemma \ref{lem1}.

\begin{lemma}
\label{lem6}
Let $u^n \in \mathcal{H}$, and let the sequences $v^n,w^n \in \ell^2(\mathcal{I};\R)$ be defined by \eqref{defvwn'}. Then there holds:
\begin{align*}
\| \, v^n \, \|_{\ell^2(\mathcal{I})}^2 \, - \, 2 \, \langle u^n \, ; \, w^n \rangle_{\ell^2(\mathcal{I})} \, = \, 
& \, - \, \dfrac{(\lambda \, a)^2}{4} \, \| \, \Delta_1 \, u^n \, \|_{\ell^2(\mathcal{I})}^2 \, - \, \dfrac{(\mu \, b)^2}{4} \, \| \, \Delta_2 \, u^n \, \|_{\ell^2(\mathcal{I})}^2 \\
& \, - \, \dfrac{(\lambda \, a)^2 +(\mu \, b)^2}{4} \, \| \, D_{1,+} \, D_{2,+} \, u^n \, \|_{\ell^2(\mathcal{I})}^2 \, .
\end{align*}
\end{lemma}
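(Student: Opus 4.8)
The plan is to mimic the proof of Lemma \ref{lem1} but to be careful about boundary terms arising from the fact that we work on $\mathcal{I}$ rather than on $\Z^2$. Dropping the superscript $n$ and writing $\alpha := \lambda \, a$, $\beta := \mu \, b$, I would first expand $\| \, v \, \|_{\ell^2(\mathcal{I})}^2 - 2 \, \langle u \, ; \, w \rangle_{\ell^2(\mathcal{I})}$ using the definitions \eqref{defvwn'}, exactly as in the Cauchy problem. This produces the squared-gradient terms $\alpha^2 \| D_{1,0} u \|^2 + \beta^2 \| D_{2,0} u \|^2$, two cross terms $2\alpha\beta \langle D_{1,0} u ; D_{2,0} u\rangle + 2\alpha\beta \langle u ; D_{1,0} D_{2,0} u\rangle$, and the selfadjoint contributions $\alpha^2 \langle u ; \Delta_1 u\rangle + \beta^2 \langle u ; \Delta_2 u \rangle - \tfrac{\alpha^2+\beta^2}{4} \langle u ; \Delta_1 \Delta_2 u\rangle$, all now taken in $\ell^2(\mathcal{I})$.

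The key observation is that every manipulation used in the whole-space proof has a counterpart in Lemma \ref{lem5} or is purely tangential. The two cross terms cancel: summing first with respect to $k \in \Z$ (which is a purely one-dimensional, boundary-free computation in the tangential variable), $D_{2,0}$ is skew-selfadjoint, so $\langle D_{1,0} u ; D_{2,0} u \rangle_{\ell^2(\mathcal{I})} = - \langle D_{2,0} D_{1,0} u ; u \rangle_{\ell^2(\mathcal{I})} = - \langle u ; D_{1,0} D_{2,0} u\rangle_{\ell^2(\mathcal{I})}$ since all operators commute. For the $\Delta_1$ term I would use \eqref{lem5-3}, namely $\langle u ; \Delta_1 u\rangle_{\ell^2(\mathcal{I})} = - \| D_{1,+} u \|_{\ell^2(\mathcal{I})}^2$; for the $\Delta_2$ term I sum first in $k$ and get $\langle u ; \Delta_2 u\rangle_{\ell^2(\mathcal{I})} = - \| D_{2,+} u \|_{\ell^2(\mathcal{I})}^2$ with no boundary term (again a tangential computation). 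For the bilaplacian term, $\langle u ; \Delta_1 \Delta_2 u\rangle_{\ell^2(\mathcal{I})}$: summing first in $k$ rewrites it as $-\langle u ; \Delta_1 (D_{2,+})^* D_{2,+} u\rangle_{\ell^2(\mathcal{I})} = -\langle D_{2,+} u ; \Delta_1 D_{2,+} u\rangle_{\ell^2(\mathcal{I})}$, and since $D_{2,+} u \in \mathcal{H}$ (tangential operators preserve $\mathcal{H}$), applying \eqref{lem5-3} to $D_{2,+} u$ in place of $u$ gives $\langle u ; \Delta_1 \Delta_2 u\rangle_{\ell^2(\mathcal{I})} = \| D_{1,+} D_{2,+} u\|_{\ell^2(\mathcal{I})}^2$.

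Collecting these, I arrive at
\begin{align*}
\| \, v \, \|_{\ell^2(\mathcal{I})}^2 \, - \, 2 \, \langle u \, ; \, w \rangle_{\ell^2(\mathcal{I})} \, = \, & \, \alpha^2 \Big( \| D_{1,0} u \|_{\ell^2(\mathcal{I})}^2 - \| D_{1,+} u \|_{\ell^2(\mathcal{I})}^2 \Big) \\
& + \beta^2 \Big( \| D_{2,0} u \|_{\ell^2(\mathcal{I})}^2 - \| D_{2,+} u \|_{\ell^2(\mathcal{I})}^2 \Big) - \dfrac{\alpha^2+\beta^2}{4} \| D_{1,+} D_{2,+} u \|_{\ell^2(\mathcal{I})}^2 \, ,
\end{align*}
which is the same structural identity as in the Cauchy case. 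It then remains to convert the first two brackets using \eqref{lem5-4} for the normal direction, $\| D_{1,0} u\|_{\ell^2(\mathcal{I})}^2 - \| D_{1,+} u\|_{\ell^2(\mathcal{I})}^2 = -\tfrac14 \| \Delta_1 u\|_{\ell^2(\mathcal{I})}^2$, and the tangential analogue of \eqref{lem2-formule1D} applied coordinate-wise (which holds with no boundary contribution, since summing over all $k \in \Z$ the two one-sided difference sums are equal), $\| D_{2,0} u\|_{\ell^2(\mathcal{I})}^2 - \| D_{2,+} u\|_{\ell^2(\mathcal{I})}^2 = -\tfrac14 \| \Delta_2 u\|_{\ell^2(\mathcal{I})}^2$. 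Substituting $\alpha = \lambda a$, $\beta = \mu b$ yields exactly the claimed identity.

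The only subtle point — and the one to watch carefully — is bookkeeping of boundary terms: each time one moves a $D_{1,\pm}$ or $\Delta_1$ across the scalar product in the \emph{normal} direction, a boundary contribution at $j=0$ (involving $u_{-1,k}$ and $u_{0,k}$) appears, and one must verify it vanishes because $u \in \mathcal{H}$. Lemma \ref{lem5} has been designed precisely so that all these normal-direction integrations by parts produce boundary terms proportional to $(u_{0,k}-u_{-1,k})^2$ or $(u_{0,k}-u_{-1,k}) v_{0,k}$, which vanish on $\mathcal{H}$; the tangential directions never generate boundary terms at all. So no boundary term survives in this particular combination, which is consistent with the fact that $\| v\|^2 - 2\langle u ; w\rangle$ is the ``diagonal'' part of the energy balance, the genuine boundary contribution having already been isolated in Lemma \ref{lem4} via the terms $\langle u ; v\rangle$ and $\langle v ; w\rangle$. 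I expect the main obstacle to be simply the care required to apply \eqref{lem5-3}–\eqref{lem5-4} to the right sequence (sometimes $u$, sometimes $D_{2,+} u$) in the right variable, rather than any genuine difficulty.
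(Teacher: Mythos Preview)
Your proposal is correct and follows essentially the same approach as the paper's proof: expand, cancel the cross terms by tangential skew-selfadjointness of $D_{2,0}$, use \eqref{lem5-3}--\eqref{lem5-4} in the normal direction and the boundary-free one-dimensional identities in the tangential direction, and handle the bilaplacian term by first passing $D_{2,+}$ across in $k$ and then applying \eqref{lem5-3} to $D_{2,+}u \in \mathcal{H}$. The only cosmetic difference is ordering: the paper first combines $\|D_{p,0}u\|^2 + \langle u;\Delta_p u\rangle$ into $-\tfrac14\|\Delta_p u\|^2$ and treats the bilaplacian last, whereas you first reach the ``structural identity'' and then apply \eqref{lem5-4}.
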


\begin{proof}
We drop the superscript $n$ for simplicity, and use again the notation $\alpha := \lambda \, a$, $\beta := \mu \, b$. We thus consider $u \in \mathcal{H}$. 
Using the definition \eqref{defvwn'}, we compute:
\begin{align*}
\| \, v \, \|_{\ell^2(\mathcal{I})}^2 \, - \, 2 \, \langle u \, ; \, w \rangle_{\ell^2(\mathcal{I})} \, =& \, \, 
\alpha^2 \, \| \, D_{1,0} \, u \, \|_{\ell^2(\mathcal{I})}^2 \, + \, \beta^2 \, \| \, D_{2,0} \, u \, \|_{\ell^2(\mathcal{I})}^2 \\
&+ \, 2 \, \alpha \, \beta \, \langle D_{1,0 } \, u \, ; \, D_{2,0} \, u \rangle_{\ell^2(\mathcal{I})} \, + \, 
2 \, \alpha \, \beta \, \langle u \, ; \, D_{1,0 } \, D_{2,0} \, u \rangle_{\ell^2(\mathcal{I})} \\
&+ \, \alpha^2 \, \langle u \, ; \, \Delta_1 \, u \rangle_{\ell^2(\mathcal{I})} \, + \, \beta^2 \, \langle u \, ; \, \Delta_2 \, u \rangle_{\ell^2(\mathcal{I})} 
\, - \, \dfrac{\alpha^2 + \beta^2}{4} \, \langle u \, ; \, \Delta_1 \, \Delta_2 \, u \rangle_{\ell^2(\mathcal{I})} \, .
\end{align*}

By first summing with respect to $k \in \Z$, we can use the skew-selfadjointness of $D_{2,0}$ on $\ell^2(\Z_k;\R)$ and find that the two terms on the 
second line of the right hand side cancel each other. We thus get:
\begin{align*}
\| \, v \, \|_{\ell^2(\mathcal{I})}^2 \, - \, 2 \, \langle u \, ; \, w \rangle_{\ell^2(\mathcal{I})} \, =& \, \, 
\alpha^2 \, \Big( \| \, D_{1,0} \, u \, \|_{\ell^2(\mathcal{I})}^2 \, + \, \langle u \, ; \, \Delta_1 \, u \rangle_{\ell^2(\mathcal{I})} \Big) \\
& \, + \, \beta^2 \, \Big( \| \, D_{2,0} \, u \, \|_{\ell^2(\mathcal{I})}^2 \, + \, \langle u \, ; \, \Delta_2 \, u \rangle_{\ell^2(\mathcal{I})} \Big) 
\, - \, \dfrac{\alpha^2 + \beta^2}{4} \, \langle u \, ; \, \Delta_1 \, \Delta_2 \, u \rangle_{\ell^2(\mathcal{I})} \, .
\end{align*}
Using the relations \eqref{lem5-3} and \eqref{lem5-4}, we get:
$$
\| \, D_{1,0} \, u \, \|_{\ell^2(\mathcal{I})}^2 \, + \, \langle u \, ; \, \Delta_1 \, u \rangle_{\ell^2(\mathcal{I})} \, = \, - \, \dfrac{1}{4} \, 
\| \, \Delta_1 \, u \, \|_{\ell^2(\mathcal{I})}^2 \, .
$$
Summing first with respect to $k$ and using the one-dimensional analogue of \eqref{formulelem2-2} (there is no boundary term since we sum with 
respect to $k \in \Z$), we also get:
$$
\| \, D_{2,0} \, u \, \|_{\ell^2(\mathcal{I})}^2 \, + \, \langle u \, ; \, \Delta_2 \, u \rangle_{\ell^2(\mathcal{I})} \, = \, - \, \dfrac{1}{4} \, 
\| \, \Delta_2 \, u \, \|_{\ell^2(\mathcal{I})}^2 \, .
$$
We are thus led to the expression:
$$
\| \, v \, \|_{\ell^2(\mathcal{I})}^2 \, - \, 2 \, \langle u \, ; \, w \rangle_{\ell^2(\mathcal{I})} \, = \, 
- \, \dfrac{\alpha^2}{4} \, \| \, \Delta_1 \, u \, \|_{\ell^2(\mathcal{I})}^2 \, - \, \dfrac{\beta^2}{4} \, \| \, \Delta_2 \, u \, \|_{\ell^2(\mathcal{I})}^2 
\, - \, \dfrac{\alpha^2 + \beta^2}{4} \, {\color{blue} \langle u \, ; \, \Delta_1 \, \Delta_2 \, u \rangle_{\ell^2(\mathcal{I})}} \, .
$$

For the last remaining term on the right hand side (in blue), we first sum with respect to $k$ and use the relation $\Delta_2=-(D_{2,+})^* \, D_{2,+}$ 
for the $\ell^2(\Z;\R)$ scalar product. We get:
$$
\| \, v \, \|_{\ell^2(\mathcal{I})}^2 \, - \, 2 \, \langle u \, ; \, w \rangle_{\ell^2(\mathcal{I})} \, = \, 
- \, \dfrac{\alpha^2}{4} \, \| \, \Delta_1 \, u \, \|_{\ell^2(\mathcal{I})}^2 \, - \, \dfrac{\beta^2}{4} \, \| \, \Delta_2 \, u \, \|_{\ell^2(\mathcal{I})}^2 
\, + \, \dfrac{\alpha^2 + \beta^2}{4} \, \langle D_{2,+} \, u \, ; \, \Delta_1 \, D_{2,+} \, u \rangle_{\ell^2(\mathcal{I})} \, ,
$$
and it then remains to use the relation \eqref{lem5-3} of Lemma \ref{lem5} (which is valid since $D_{2,+} \, u$ also belongs to $\mathcal{H}$). We end 
up with the expected relation:
$$
\| \, v \, \|_{\ell^2(\mathcal{I})}^2 \, - \, 2 \, \langle u \, ; \, w \rangle_{\ell^2(\mathcal{I})} \, = \, 
- \, \dfrac{\alpha^2}{4} \, \| \, \Delta_1 \, u \, \|_{\ell^2(\mathcal{I})}^2 \, - \, \dfrac{\beta^2}{4} \, \| \, \Delta_2 \, u \, \|_{\ell^2(\mathcal{I})}^2 
\, - \, \dfrac{\alpha^2 + \beta^2}{4} \, \|\, D_{1,+} \, D_{2,+} \, u \, \|_{\ell^2(\mathcal{I})}^2 \, .
$$
This completes the proof of Lemma \ref{lem6}.
\end{proof}

As in Section \ref{section2}, the main point in the proof of Theorem \ref{thm1} is an estimate of the norm of the term $w^n$ that is defined in 
\eqref{defwjkn'}. The analogue of Proposition \ref{prop1} in the half-space case with the extrapolation numerical boundary condition \eqref{extrapolation} 
is the following result.

\begin{proposition}
\label{prop2}
Let $u^n \in \mathcal{H}$, and let the sequence $w^n \in \ell^2(\mathcal{I};\R)$ be defined by \eqref{defwjkn'}. Then there holds:
\begin{multline}
\label{estimprop2}
4 \, \| \, w^n \, \|_{\ell^2(\mathcal{I})}^2 \, + \, (\mu \, b)^2 \, \dfrac{(\lambda \, a)^2 +(\mu \, b)^2}{2} \, \sum_{k \in \Z} \, (\Delta_2 \, u_{0,k}^n)^2 \, \le \, 
2 \, \big( (\lambda \, a)^2 +(\mu \, b)^2 \big) \, \Big\{ (\lambda \, a)^2 \, \| \, \Delta_1 \, u^n \, \|_{\ell^2(\mathcal{I})}^2 \\
+ \, (\mu \, b)^2 \, \| \, \Delta_2 \, u^n \, \|_{\ell^2(\mathcal{I})}^2 
\, + \, \big( (\lambda \, a)^2 +(\mu \, b)^2 \big) \, \| \, D_{1,+} \, D_{2,+} \, u^n \, \|_{\ell^2(\mathcal{I})}^2 \Big\} \, .
\end{multline}
\end{proposition}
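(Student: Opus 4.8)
The plan is to repeat the proof of Proposition \ref{prop1} line by line, but on $\ell^2(\mathcal{I})$ instead of $\ell^2(\Z^2)$, and to keep track of the boundary terms generated along the way. The key observation is that almost every step in the proof of Proposition \ref{prop1} is of one of three types, each of which survives the passage to the half space with \emph{no} boundary contribution: (i) purely algebraic pointwise inequalities (the crude bound on the ``pink'' term, the inequality $4 a_1 a_2 a_3 a_4 \le (a_1^2+a_2^2)(a_3^2+a_4^2)$); (ii) one-dimensional identities in the \emph{tangential} variable $k \in \Z$ (skew-selfadjointness of $D_{2,0}$, selfadjointness of $\Delta_2$, the relation $\|D_{2,0} \cdot\|^2 = \|D_{2,+} \cdot\|^2 - \tfrac14 \|\Delta_2 \cdot\|^2$, the moves $A_{2,-} \leftrightarrow A_{2,+}$), which are boundary-free because $k$ runs over all of $\Z$; and (iii) \emph{normal}-direction manipulations, for which Lemma \ref{lem5} is designed precisely: $\Delta_1$ stays selfadjoint, $\langle D_{1,0}U ; \Delta_1 U\rangle_{\ell^2(\mathcal{I})} = 0$, $\langle U ; \Delta_1 U\rangle_{\ell^2(\mathcal{I})} = - \|D_{1,+}U\|_{\ell^2(\mathcal{I})}^2$ and $\|D_{1,0}U\|_{\ell^2(\mathcal{I})}^2 = \|D_{1,+}U\|_{\ell^2(\mathcal{I})}^2 - \tfrac14 \|\Delta_1 U\|_{\ell^2(\mathcal{I})}^2$, all with no boundary term, for every $U \in \mathcal{H}$. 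Since $D_{2,\pm}$, $D_{2,0}$ and $\Delta_2$ preserve $\mathcal{H}$, every sequence to which we shall apply Lemma \ref{lem5} — namely $u^n$, $D_{2,+}u^n$, $D_{2,0}u^n$, $\Delta_2 u^n$ — indeed belongs to $\mathcal{H}$.

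Accordingly, with $\alpha := \lambda \, a$ and $\beta := \mu \, b$ as before, I would expand $4 \, \|w^n\|_{\ell^2(\mathcal{I})}^2$ into the same ten terms and treat them in the same order. The ``pink'' cross term is bounded pointwise; the ``green'' term $4 \alpha^2 \beta^2 \|D_{1,0}D_{2,0}u^n\|_{\ell^2(\mathcal{I})}^2$ by $4\alpha^2\beta^2 \le (\alpha^2+\beta^2)^2$ followed by the half-space analogue of \eqref{formulelem2-3} (obtained by applying \eqref{lem5-4} to $D_{2,0}u^n \in \mathcal{H}$ and then the tangential one-dimensional formulas — no boundary term appears); the ``blue'' term by $\langle \Delta_1 \Delta_2 u^n ; \Delta_2 u^n\rangle_{\ell^2(\mathcal{I})} = - \|D_{1,+}\Delta_2 u^n\|_{\ell^2(\mathcal{I})}^2$ (which is \eqref{lem5-3} for $\Delta_2 u^n \in \mathcal{H}$) and $\langle \Delta_1 \Delta_2 u^n ; \Delta_1 u^n\rangle_{\ell^2(\mathcal{I})} = - \|D_{2,+}\Delta_1 u^n\|_{\ell^2(\mathcal{I})}^2$ (summing first in $k$); the ``red'' term by summing first in $k$ to move $D_{2,0}$ and $\Delta_2$, which turns $-\alpha\beta(\alpha^2+\beta^2)\langle D_{1,0}D_{2,0}u^n ; \Delta_1\Delta_2 u^n\rangle_{\ell^2(\mathcal{I})}$ into $\alpha\beta(\alpha^2+\beta^2)\langle D_{1,0}\Delta_2 u^n ; D_{2,0}\Delta_1 u^n\rangle_{\ell^2(\mathcal{I})}$ with no boundary term. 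This gives the exact analogue of \eqref{expressionw} on $\ell^2(\mathcal{I})$, boundary-free. Step 1 (the bound on the quantity $A$ by Cauchy-Schwarz, $4a_1a_2a_3a_4 \le (a_1^2+a_2^2)(a_3^2+a_4^2)$ and \eqref{lem5-4} applied to $\Delta_2 u^n\in\mathcal{H}$) goes through verbatim, as does the treatment of $B_1$: there one only moves the tangential average operator $A_{2,-}$ across the scalar product (clean) and then uses $\|A_{1,-}W\|_{\ell^2(\mathcal{I})}^2 + \tfrac14\|D_{1,-}W\|_{\ell^2(\mathcal{I})}^2 = \|W\|_{\ell^2(\mathcal{I})}^2 + \tfrac12 \sum_{k\in\Z} W_{-1,k}^2$ with $W = D_{1,+}D_{2,+}u^n$, whose ghost values $W_{-1,k} = (D_{2,+}u^n)_{0,k} - (D_{2,+}u^n)_{-1,k}$ vanish since $D_{2,+}u^n\in\mathcal{H}$; hence $B_1 \le (\alpha^2+\beta^2)\big(\|\Delta_1 u^n\|_{\ell^2(\mathcal{I})}^2 + \|D_{1,+}D_{2,+}u^n\|_{\ell^2(\mathcal{I})}^2\big)$ with no boundary term.

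The single place where a boundary contribution survives — and the only genuine difference with Proposition \ref{prop1} — is the term $B_2 = \tfrac{\alpha^2+\beta^2}{2}\|D_{1,+}\Delta_2 u^n\|_{\ell^2(\mathcal{I})}^2 + 4\alpha\beta\,\langle D_{1,0}D_{2,0}u^n ; \Delta_2 u^n\rangle_{\ell^2(\mathcal{I})}$. Writing $D_{1,0}D_{2,0} = A_{1,-}A_{2,-}D_{1,+}D_{2,+}$ and moving the \emph{normal} average operator $A_{1,-}$ to the other side via $\langle A_{1,-}Z ; W\rangle_{\ell^2(\mathcal{I})} = \langle Z ; A_{1,+}W\rangle_{\ell^2(\mathcal{I})} + \tfrac12\sum_{k\in\Z} Z_{-1,k} W_{0,k}$ with $Z = A_{2,-}D_{1,+}D_{2,+}u^n$ (again $Z_{-1,k} = 0$ because $D_{2,+}u^n\in\mathcal{H}$), Cauchy-Schwarz gives $4\alpha\beta\,\langle D_{1,0}D_{2,0}u^n ; \Delta_2 u^n\rangle_{\ell^2(\mathcal{I})} \le (\alpha^2+\beta^2)\big(\|A_{2,-}D_{1,+}D_{2,+}u^n\|_{\ell^2(\mathcal{I})}^2 + \|A_{1,+}\Delta_2 u^n\|_{\ell^2(\mathcal{I})}^2\big)$. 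The first summand equals $\|D_{1,+}D_{2,+}u^n\|_{\ell^2(\mathcal{I})}^2 - \tfrac14\|D_{1,+}\Delta_2 u^n\|_{\ell^2(\mathcal{I})}^2$ by the clean tangential identity $\|A_{2,-}W\|_{\ell^2(\mathcal{I})}^2 + \tfrac14\|D_{2,-}W\|_{\ell^2(\mathcal{I})}^2 = \|W\|_{\ell^2(\mathcal{I})}^2$; for the second summand, however, the \emph{normal} counterpart reads $\|A_{1,+}W\|_{\ell^2(\mathcal{I})}^2 + \tfrac14\|D_{1,+}W\|_{\ell^2(\mathcal{I})}^2 = \|W\|_{\ell^2(\mathcal{I})}^2 - \tfrac12\sum_{k\in\Z} W_{0,k}^2$, and with $W = \Delta_2 u^n$ its trace is precisely $W_{0,k} = \Delta_2 u^n_{0,k}$. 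This yields $B_2 \le (\alpha^2+\beta^2)\big(\|\Delta_2 u^n\|_{\ell^2(\mathcal{I})}^2 + \|D_{1,+}D_{2,+}u^n\|_{\ell^2(\mathcal{I})}^2\big) - \tfrac{\alpha^2+\beta^2}{2}\sum_{k\in\Z}(\Delta_2 u^n_{0,k})^2$ (the $\|D_{1,+}\Delta_2 u^n\|^2$ contributions cancelling exactly as in the whole space). Substituting $\alpha^2 B_1$ and $\beta^2 B_2$ into the analogue of \eqref{estimw-prime} and collecting terms gives \eqref{estimprop2}. I expect the main obstacle to be exactly this bookkeeping: at every integration by parts one must decide whether it is tangential (free), normal on a sequence of $\mathcal{H}$ (free, by Lemma \ref{lem5}), or genuinely normal on a sequence whose trace does not vanish, and then check that the only surviving boundary term is the favorable $-\tfrac{\alpha^2+\beta^2}{2}\sum_{k}(\Delta_2 u^n_{0,k})^2$, with the sign that places it on the left-hand side of \eqref{estimprop2}.
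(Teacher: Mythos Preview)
Your proposal is correct and follows essentially the same route as the paper's proof: expand $4\|w^n\|_{\ell^2(\mathcal{I})}^2$, handle the coloured terms via pointwise bounds, tangential identities in $k$, and the normal-direction relations of Lemma~\ref{lem5} (applied to $u^n$, $D_{2,+}u^n$, $D_{2,0}u^n$, $\Delta_2 u^n \in \mathcal{H}$) to reach the $\ell^2(\mathcal{I})$ analogue of \eqref{expressionw} with no boundary term, then carry Step~1 through verbatim and isolate the single boundary contribution in $B_2$ exactly as you describe. Your observation that the only surviving boundary term comes from the identity $\|A_{1,+}W\|_{\ell^2(\mathcal{I})}^2 + \tfrac14\|D_{1,+}W\|_{\ell^2(\mathcal{I})}^2 = \|W\|_{\ell^2(\mathcal{I})}^2 - \tfrac12\sum_{k\in\Z} W_{0,k}^2$ with $W=\Delta_2 u^n$ is precisely the content of the paper's Lemma~\ref{lem8}, and your treatment of $B_1$ matches Lemma~\ref{lem7}.
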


\begin{proof}
The proof follows the method that we have used when proving Proposition \ref{prop1}, except that we need to take boundary terms into account 
when we use integration by parts with respect to the first variable $j$. Once again, we use the notation $\alpha \, := \, \lambda \, a$ and $\beta \, 
:= \, \mu \, b$ and drop the superscript $n$ for simplicity. We first compute:
\begin{align*}
4 \, \| \, w \, \|_{\ell^2(\mathcal{I})}^2 \, =& \, \, \alpha^4 \, \| \, \Delta_1 \, u \, \|_{\ell^2(\mathcal{I})}^2 \, + \, \beta^4 \, \| \, \Delta_2 \, u \, \|_{\ell^2(\mathcal{I})}^2 
\, + \, 2 \, \alpha^2 \, \beta^2 \, {\color{Magenta} \langle \Delta_1 \, u \, ; \, \Delta_2 \, u \rangle_{\ell^2(\mathcal{I})}} \\
& + \, {\color{ForestGreen} 4 \, \alpha^2 \, \beta^2 \, \| \, D_{1,0} \, D_{2,0} \, u \, \|_{\ell^2(\mathcal{I})}^2} 
\, + \, \dfrac{(\alpha^2+\beta^2)^2}{16} \, \| \, \Delta_1 \, \Delta_2 \, u \, \|_{\ell^2(\mathcal{I})}^2 \\
& - \, \dfrac{\alpha^2+\beta^2}{2} \, \langle \Delta_1 \, \Delta_2 \, u \, ; \, \alpha^2 \, \Delta_1 \, u \, + \, \beta^2 \, \Delta_2 \, u \rangle_{\ell^2(\mathcal{I})} \\
& + \, 4 \, \alpha \, \beta \, \langle D_{1,0} \, D_{2,0} \, u \, ; \, \alpha^2 \, \Delta_1 \, u \, + \, \beta^2 \, \Delta_2 \, u \rangle_{\ell^2(\mathcal{I})} \\
& - \, (\alpha^2+\beta^2) \, \alpha \, \beta \, \langle D_{1,0} \, D_{2,0} \, u \, ; \, \Delta_1 \, \Delta_2 \, u \rangle_{\ell^2(\mathcal{I})} \, ,
\end{align*}
and we use the same crude estimate as in the proof of Proposition \ref{prop1} for the pink term on the right hand side. For the green term on the right 
hand side, we first use the inequality \eqref{inegalite-1} and then the relation \eqref{lem5-4} of Lemma \ref{lem5} and combine it with \eqref{formulelem2-2} 
of Lemma \ref{lem2} to obtain the analogue of \eqref{formulelem2-3}, that is:
\begin{multline}
\label{relationprop2-1}
\| \, D_{1,0} \, D_{2,0} \, u \, \|_{\ell^2(\mathcal{I})}^2 \, = \, \| \, D_{1,+} \, D_{2,+} \, u \, \|_{\ell^2(\mathcal{I})}^2 \, 
- \, \dfrac{1}{4} \, \| \, D_{1,+} \, \Delta_2 \, u \, \|_{\ell^2(\mathcal{I})}^2 \, - \, \dfrac{1}{4} \, \| \, D_{2,+} \, \Delta_1 \, u \, \|_{\ell^2(\mathcal{I})}^2 \\
+ \, \dfrac{1}{16} \, \| \, \Delta_1 \, \Delta_2 \, u \, \|_{\ell^2(\mathcal{I})}^2 \, ,
\end{multline}
since $u$ belongs to $\mathcal{H}$. We thus get our first estimate:
\begin{align}
4 \, \| \, w \, \|_{\ell^2(\mathcal{I})}^2 \, \le & \, \, (\alpha^2+\beta^2) \, 
\Big( \alpha^2 \, \| \, \Delta_1 \, u \, \|_{\ell^2(\mathcal{I})}^2 \, + \, \beta^2 \, \| \, \Delta_2 \, u \, \|_{\ell^2(\mathcal{I})}^2 
\, + \, (\alpha^2+\beta^2) \, \| \, D_{1,+} \, D_{2,+} \, u \, \|_{\ell^2(\mathcal{I})}^2 \Big) \nonumber \\
& + \, \dfrac{(\alpha^2+\beta^2)^2}{8} \, \| \, \Delta_1 \, \Delta_2 \, u \, \|_{\ell^2(\mathcal{I})}^2 
\, - \, \dfrac{(\alpha^2+\beta^2)^2}{4} \, \Big( \| \, D_{1,+} \, \Delta_2 \, u \, \|_{\ell^2(\mathcal{I})}^2 \, + \, \| \, D_{2,+} \, \Delta_1 \, u \, \|_{\ell^2(\mathcal{I})}^2 \Big) \nonumber \\
& + \, 4 \, \alpha \, \beta \, \langle D_{1,0} \, D_{2,0} \, u \, ; \, \alpha^2 \, \Delta_1 \, u \, + \, \beta^2 \, \Delta_2 \, u \rangle_{\ell^2(\mathcal{I})} 
\, - \,(\alpha^2+\beta^2) \, \alpha \, \beta \, {\color{red} \langle D_{1,0} \, D_{2,0} \, u \, ; \, \Delta_1 \, \Delta_2 \, u \rangle_{\ell^2(\mathcal{I})}} \label{ineg-symw2} \\
& - \, \dfrac{\alpha^2+\beta^2}{2} \, {\color{blue} \langle \Delta_1 \, \Delta_2 \, u \, ; \, \alpha^2 \, \Delta_1 \, u \, + \, \beta^2 \, \Delta_2 \, u \rangle_{\ell^2(\mathcal{I})}} \, .\nonumber
\end{align}
For the blue factor, we use either the relation \eqref{lem5-3} (for the sequence $\Delta_2 \, u \in \mathcal{H}$) or we first sum with respect to $k \in \Z$ and 
use the relation $\Delta_2 =-(D_{2,+})^* \, D_{2,+}$ in $\ell^2(\Z;\R)$. For the red term, we also sum first with respect to $k$ and use the skew-selfadjointness 
of $D_{2,0}$ together with the selfadjointness of $\Delta_2$. We end up with the analogue of \eqref{expressionw}, that is:
\begin{align}
4 \, \| \, w \, \|_{\ell^2(\mathcal{I})}^2 \, \le & \, \, (\alpha^2+\beta^2) \, 
\Big( \alpha^2 \, \| \, \Delta_1 \, u \, \|_{\ell^2(\mathcal{I})}^2 \, + \, \beta^2 \, \| \, \Delta_2 \, u \, \|_{\ell^2(\mathcal{I})}^2 
\, + \, (\alpha^2+\beta^2) \, \| \, D_{1,+} \, D_{2,+} \, u \, \|_{\ell^2(\mathcal{I})}^2 \Big) \notag \\
& + \, \dfrac{(\alpha^2+\beta^2)^2}{8} \, \| \, \Delta_1 \, \Delta_2 \, u \, \|_{\ell^2(\mathcal{I})}^2 
\,  + \, 4 \, \alpha \, \beta \, \langle D_{1,0} \, D_{2,0} \, u \, ; \, \alpha^2 \, \Delta_1 \, u \, + \, \beta^2 \, \Delta_2 \, u \rangle_{\ell^2(\mathcal{I})} \notag \\
& + \, {\color{blue} \dfrac{(\alpha^2+\beta^2)}{4} \, (\alpha^2-\beta^2) \, \Big( 
\| \, D_{2,+} \, \Delta_1 \, u \, \|_{\ell^2(\mathcal{I})}^2 \, - \, \| \, D_{1,+} \, \Delta_2 \, u \, \|_{\ell^2(\mathcal{I})}^2 \Big)} \label{inegprop2-1} \\
& + \, {\color{blue} (\alpha^2+\beta^2) \, \alpha \, \beta \, \langle D_{1,0} \, \Delta_2 \, u \, ; \, D_{2,0} \, \Delta_1 \, u \rangle_{\ell^2(\mathcal{I})}} \, .\notag
\end{align}

The estimate of the terms in blue on the right hand side of \eqref{inegprop2-1} is carried out with the same procedure as in Step 1 of the proof of 
Proposition \ref{prop1}. The analysis there used the relations \eqref{formulelem2-1}-\eqref{formulelem2-2} of Lemma \ref{lem2}, and we substitue 
here \eqref{lem5-4} for \eqref{formulelem2-1}. All the arguments of Step 1 in the proof of Proposition \ref{prop1} can be reproduced almost word 
for word, with the only modification that consists in computing scalar products and norms on $\mathcal{I}$ rather than on $\Z^2$. We thus feel free 
to use the corresponding estimate in \eqref{inegprop2-1} to obtain (compare with \eqref{estimw}):
\begin{align}
4 \, \| \, w \, \|_{\ell^2(\mathcal{I})}^2 \, \le & \, \, (\alpha^2+\beta^2) \, 
\Big( \alpha^2 \, \| \, \Delta_1 \, u \, \|_{\ell^2(\mathcal{I})}^2 \, + \, \beta^2 \, \| \, \Delta_2 \, u \, \|_{\ell^2(\mathcal{I})}^2 
\, + \, (\alpha^2+\beta^2) \, \| \, D_{1,+} \, D_{2,+} \, u \, \|_{\ell^2(\mathcal{I})}^2 \Big) \notag \\
& + \, \dfrac{\alpha^2+\beta^2}{2} \, \Big( 
\alpha^2 \, \| \, D_{2,+} \, \Delta_1 \, u \, \|_{\ell^2(\mathcal{I})}^2 \, + \, \beta^2 \, \| \, D_{1,+} \, \Delta_2 \, u \, \|_{\ell^2(\mathcal{I})}^2 \Big) \label{inegprop2-2} \\
& + \, 4 \, \alpha \, \beta \, \langle D_{1,0} \, D_{2,0} \, u \, ; \, \alpha^2 \, \Delta_1 \, u \, + \, \beta^2 \, \Delta_2 \, u \rangle_{\ell^2(\mathcal{I})} \, .\notag
\end{align}
\bigskip

It now remains to proceed with the estimate of the second and third lines on the right hand side of \eqref{inegprop2-2}. Following the proof of Proposition 
\ref{prop1}, we define the quantities:
\begin{subequations}
\label{defB-halfpsace}
\begin{align}
B_1 \, &:= \, \dfrac{\alpha^2+\beta^2}{2} \, \| \, D_{2,+} \, \Delta_1 \, u \, \|_{\ell^2(\mathcal{I})}^2 \, + \, 
4 \, \alpha \, \beta \, \langle D_{1,0} \, D_{2,0} \, u \, ; \, \Delta_1 \, u \rangle_{\ell^2(\mathcal{I})} \, ,\label{defB1-halfspace} \\
B_2 \, &:= \, \dfrac{\alpha^2+\beta^2}{2} \, \| \, D_{1,+} \, \Delta_2 \, u \, \|_{\ell^2(\mathcal{I})}^2 \, + \, 
4 \, \alpha \, \beta \, \langle D_{1,0} \, D_{2,0} \, u \, ; \, \Delta_2 \, u \rangle_{\ell^2(\mathcal{I})} \, .\label{defB2-halfspace}
\end{align}
\end{subequations}
The estimate \eqref{inegprop2-2} thus reads:
\begin{align}
4 \, \| \, w \, \|_{\ell^2(\mathcal{I})}^2 \, \le& \, \, (\alpha^2+\beta^2) \, 
\Big( \alpha^2 \, \| \, \Delta_1 \, u \, \|_{\ell^2(\mathcal{I})}^2 \, + \, \beta^2 \, \| \, \Delta_2 \, u \, \|_{\ell^2(\mathcal{I})}^2 
\, + \, (\alpha^2+\beta^2) \, \| \, D_{1,+} \, D_{2,+} \, u \, \|_{\ell^2(\mathcal{I})}^2 \Big) \label{inegprop2-3} \\
& + \, \alpha^2 \, B_1 \, + \, \beta^2 \, B_2 \, .\notag
\end{align}
At this stage, the analysis follows the method that we used in the proof of Proposition \ref{prop1} (see Step 2 in the proof) but the numerical boundary 
implies slightly different arguments in the estimates of $B_1$ and $B_2$. We thus state and prove two separate results in order to make this final point 
of our analysis clear.

\begin{lemma}
\label{lem7}
Let $u \in \mathcal{H}$, $\alpha,\beta \in \R$ and let $B_1$ be defined by \eqref{defB1-halfspace}. Then there holds:
$$
B_1 \, \le \, (\alpha^2+\beta^2) \, \Big( \| \, \Delta_1 \, u \, \|_{\ell^2(\mathcal{I})}^2 \, + \, \| \, D_{1,+} \, D_{2,+} \, u \, \|_{\ell^2(\mathcal{I})}^2 \Big) \, .
$$
\end{lemma}

\begin{proof}[Proof of Lemma \ref{lem7}]
We introduce again the average operators $A_{1,\pm}$ and $A_{2,\pm}$ defined (whenever the formulas make sense) as in Section \ref{section2} by:
\begin{align*}
&(A_{1,+} \, V)_{j,k} \, := \, \dfrac{V_{j+1,k} \, + \, V_{j,k}}{2} \, ,\quad (A_{1,-} \, V)_{j,k} \, := \, \dfrac{V_{j,k} \, + \, V_{j-1,k}}{2}  \, ,\\
&(A_{2,+} \, V)_{j,k} \, := \, \dfrac{V_{j,k+1} \, + \, V_{j,k}}{2} \, ,\quad (A_{2,-} \, V)_{j,k} \, := \, \dfrac{V_{j,k} \, + \, V_{j,k-1}}{2}  \, .
\end{align*}
Writing then $D_{2,0}=A_{2,-}\, D_{2,+}$ and summing first with respect to $k$ in order to use $A_{2,-}^*=A_{2,+}$, we first get:
\begin{align*}
B_1 \, &= \, \dfrac{\alpha^2+\beta^2}{2} \, \| \, D_{2,+} \, \Delta_1 \, u \, \|_{\ell^2(\mathcal{I})}^2 \, + \, 
4 \, \alpha \, \beta \, \langle D_{1,0} \, D_{2,+} \, u \, ; \, A_{2,+} \, \Delta_1 \, u \rangle_{\ell^2(\mathcal{I})} \\
&= \, \dfrac{\alpha^2+\beta^2}{2} \, \| \, D_{2,+} \, \Delta_1 \, u \, \|_{\ell^2(\mathcal{I})}^2 \, + \, 
4 \, \alpha \, \beta \, \langle A_{1,-} \, D_{1,+} \, D_{2,+} \, u \, ; \, A_{2,+} \, \Delta_1 \, u \rangle_{\ell^2(\mathcal{I})} \\
&\le \, \dfrac{\alpha^2+\beta^2}{2} \, \| \, D_{2,+} \, \Delta_1 \, u \, \|_{\ell^2(\mathcal{I})}^2 \, + \, 
(\alpha^2+\beta^2) \, \Big( \|\, A_{1,-} \, D_{1,+} \, D_{2,+} \, u \, \|_{\ell^2(\mathcal{I})}^2 \, + \, 
\| \, A_{2,+} \, \Delta_1 \, u \, \|_{\ell^2(\mathcal{I})}^2 \Big) \, .
\end{align*}

We can then follow the arguments in Section \ref{section2} to obtain:
\begin{align*}
\|\, A_{2,+} \, \Delta_1 \, u \, \|_{\ell^2(\mathcal{I})}^2 \, =& \, \|\, \Delta_1 \, u \, \|_{\ell^2(\mathcal{I})}^2 \, - \, \dfrac{1}{4} \, 
\|\, D_{2,+} \, \Delta_1 \, u \, \|_{\ell^2(\mathcal{I})}^2 \, ,\\
\|\, A_{1,-} \, D_{1,+} \, D_{2,+} \, u \, \|_{\ell^2(\mathcal{I})}^2 \, =& \, \|\, D_{1,+} \, D_{2,+} \, u \, \|_{\ell^2(\mathcal{I})}^2 \, - \, \dfrac{1}{4} \, 
\|\, D_{2,+} \, \Delta_1 \, u \, \|_{\ell^2(\mathcal{I})}^2 \, ,
\end{align*}
where, in the first equality, we have summed with respect to $k \in \Z$ and then with respect to $j \in \N$, while in the second equality, we have 
summed with respect to $j \in \N$ and then with respect to $k \in \Z$ (and we have also used the fact that $D_{1,+} \, D_{2,+} \, u_{-1,k}=0$ for any 
$k \in \Z$ since $u$ belongs to $\mathcal{H}$). The claim of Lemma \ref{lem7} follows.
\end{proof}

\begin{lemma}
\label{lem8}
Let $u \in \mathcal{H}$, $\alpha,\beta \in \R$ and let $B_2$ be defined by \eqref{defB2-halfspace}. Then there holds:
$$
B_2 \, \le \, (\alpha^2+\beta^2) \, \Big( \| \, \Delta_2 \, u \, \|_{\ell^2(\mathcal{I})}^2 \, + \, \| \, D_{1,+} \, D_{2,+} \, u \, \|_{\ell^2(\mathcal{I})}^2 \Big) 
\, - \, \dfrac{\alpha^2+\beta^2}{2} \, \sum_{k \in \Z} \, (\Delta_2 \, u_{0,k})^2 \, .
$$
\end{lemma}

\begin{proof}[Proof of Lemma \ref{lem8}]
We start from the definition \eqref{defB2-halfspace}. Using the relations $D_{1,0}=A_{1,-} \, D_{1,+}$ and $D_{2,0}=A_{2,-} \, D_{2,+}$, we 
have:
$$
B_2 \, = \, \dfrac{\alpha^2+\beta^2}{2} \, \| \, D_{1,+} \, \Delta_2 \, u \, \|_{\ell^2(\mathcal{I})}^2 \, + \, 
4 \, \alpha \, \beta \, \langle A_{1,-} \, A_{2,-} \, D_{1,+} \, D_{2,+} \, u \, ; \, \Delta_2 \, u \rangle_{\ell^2(\mathcal{I})} \, ,
$$
and the scalar product is of the form $\langle A_{1,-} \, W \, ; \, \widetilde{W} \rangle_{\ell^2(\mathcal{I})}$ for a sequence $W$ that satisfies 
$W_{-1,k}=0$ for any $k \in \Z$. We can therefore shift indices and get:
\begin{align*}
B_2 \, &= \, \dfrac{\alpha^2+\beta^2}{2} \, \| \, D_{1,+} \, \Delta_2 \, u \, \|_{\ell^2(\mathcal{I})}^2 \, + \, 
4 \, \alpha \, \beta \, \langle A_{2,-} \, D_{1,+} \, D_{2,+} \, u \, ; \, A_{1,+} \, \Delta_2 \, u \rangle_{\ell^2(\mathcal{I})} \\
&\le \, \dfrac{\alpha^2+\beta^2}{2} \, \| \, D_{1,+} \, \Delta_2 \, u \, \|_{\ell^2(\mathcal{I})}^2 \, + \, 
(\alpha^2+\beta^2) \, \Big( \|\, A_{2,-} \, D_{1,+} \, D_{2,+} \, u \, \|_{\ell^2(\mathcal{I})}^2 \, + \, 
\| \, A_{1,+} \, \Delta_2 \, u \, \|_{\ell^2(\mathcal{I})}^2 \Big) \, .
\end{align*}

The novelty pops up here. Summing first with respect to $k$, we get once again the relation:
\begin{equation}\label{equation_B2}
\|\, A_{2,-} \, D_{1,+} \, D_{2,+} \, u \, \|_{\ell^2(\mathcal{I})}^2 \, + \, \dfrac{1}{4} \, \|\, D_{1,+} \, \Delta_2 \, u \, \|_{\ell^2(\mathcal{I})}^2 
\, = \, \|\, D_{1,+} \, D_{2,+} \, u \, \|_{\ell^2(\mathcal{I})}^2 \, .
\end{equation}
Now, if $U$ belongs to $\ell^2(\mathcal{I};\R)$, we compute:
\begin{align*}
\| \, A_{1,+} \, U \, \|_{\ell^2(\mathcal{I})}^2 \, + \, \dfrac{1}{4} \, \|\, D_{1,+} \, U \, \|_{\ell^2(\mathcal{I})}^2 
\, = \, & \, \dfrac{1}{4} \, \sum_{j \ge 0,k\in \Z} \, (U_{j,k} \, + \, U_{j+1,k})^2 \, + \, (U_{j+1,k} \, - \, U_{j,k})^2 \\
= \, & \, \dfrac{1}{2} \, \sum_{j \ge 0,k\in \Z} \, U_{j,k}^2 \, + \, \dfrac{1}{2} \, \sum_{j \ge 1,k\in \Z} \, U_{j,k}^2 \\
= \, & \, \| \, U \, \|_{\ell^2(\mathcal{I})}^2 \, - \, \dfrac{1}{2} \, \sum_{k\in \Z} \, U_{0,k}^2 \, .
\end{align*}
Using the above two relations in our estimate of $B_2$, the claim of Lemma \ref{lem8} follows.
\end{proof}

Using the bounds for $B_1$ and $B_2$ given in Lemma \ref{lem7} and Lemma \ref{lem8}, the estimate \eqref{inegprop2-3} yields our final 
estimate \eqref{estimprop2} as stated in Proposition \ref{prop2}.
\end{proof}

The proof of Theorem \ref{thm1} is now a mere application of the above results. Given $u^n \in \mathcal{H}$, the sequence $u^{n+1}$ is defined 
on the interior set of indices by \eqref{LW}. With our now usual notation for $v^n$ and $w^n$, we thus get (compare with \eqref{energie-cauchy} 
when there is no boundary):
\begin{align*}
\| \, u^{n+1} \, \|_{\ell^2(\mathcal{I})}^2 \, - \, \| \, u^n \, \|_{\ell^2(\mathcal{I})}^2 \, =& \, 
\| \, w^n \, \|_{\ell^2(\mathcal{I})}^2 \, + \, \| \, v^n \, \|_{\ell^2(\mathcal{I})}^2 \, - \, 2 \, \langle u^n \, ; \, w^n \rangle_{\ell^2(\mathcal{I})} \\
& \, + \, 2 \, \langle u^n \, ; \, v^n \rangle_{\ell^2(\mathcal{I})} \, - \, 2 \, \langle v^n \, ; \, w^n \rangle_{\ell^2(\mathcal{I})} \, .
\end{align*}
Combining Lemma \ref{lem6} with Proposition \ref{prop2}, we get:
$$
\| \, w^n \, \|_{\ell^2(\mathcal{I})}^2 \, + \, \| \, v^n \, \|_{\ell^2(\mathcal{I})}^2 \, - \, 2 \, \langle u^n \, ; \, w^n \rangle_{\ell^2(\mathcal{I})} 
\, \le \, - \, (\mu \, b)^2 \, \dfrac{(\lambda \, a)^2 +(\mu \, b)^2}{8} \, \sum_{k \in \Z} \, (\Delta_2 \, u_{0,k}^n)^2 \, ,
$$
as long as the parameters $\lambda$ and $\mu$ satisfy the stability condition \eqref{CFLlaxwendroff}. With Corollary \ref{cor2}, we then get the 
result of Theorem \ref{thm1} (recalling the definition \eqref{normH} for the norm of the Hilbert space $\mathcal{H}$).

\section{The problem in a quarter-space}

\subsection{The main result}
\label{section4}

To conclude this article we show that the energy method described in Sections \ref{section2} and \ref{section3} also applies to the quarter space 
geometry at least for one simple and natural \textit{corner extrapolation} condition. We consider the following outgoing (for both sides of the boundary) 
transport equation:
\begin{equation}\label{transport-quart}
\begin{cases}
\partial_t u \, + \, a \, \partial_x u \, + \, b \, \partial_y u \, = \, 0 \, ,& t \ge 0 \, ,\, (x,y) \in \R^+ \times \R^+ \, ,\\
u_{|_{t=0}} \, = \, u_0 \, ,& 
\end{cases}
\end{equation}
where we assume $a \, , \, b <0$, so that no boundary conditions are required on the two sides of the boundary $\{ x\,=\,0 \}$ and $\{ y\,=\,0 \}$ of the 
space domain.

Let $j \in \Z$; the numerical scheme \eqref{LW} requires the knowledge of the $u_{j-1,k+1}^n$, $u_{j-1,k}^n$, $u_{j-1,k-1}^n$ to determine $u_{j,k}^{n+1}$. 
More precisely, we shall keep in mind that the coefficient of $u_{j-1,k-1}^n$ in the expression of $u_{j,k}^{n+1}$ in \eqref{LW} equals $-(\lambda \, a-\mu 
\, b)^2/8$. For the quarter space problem \eqref{transport-quart}, we use the discrete set of indices $\mathfrak{I} := \N \times \N$ for the \emph{interior} 
values of the numerical solution, so that, unless we assume $\lambda \, a=\mu \, b$, which is very restrictive, the determination of the interior value 
$u_{0,0}^{n+1}$ requires prescribing some numerical boundary condition for the \emph{corner} value $u_{-1,-1}^n$. We also use below the notation 
$\mathfrak{J} := (\{ -1 \} \cup \N)^2$ for the full set of indices corresponding to the cells on which the numerical solution is defined (including 
the so-called \emph{ghost cells} that corresponds to $j=-1$ or $k=-1$ in our notation). There are actually two types of \emph{ghost cells}: the 
boundary ones, which correspond to $j=-1$ and $k\geq 0$, or $j\geq 0$ and $k=-1$, and the corner \textit{ghost cell} corresponding to $j=k=-1$. 
Consequently some numerical boundary conditions are required both for the boundary ghost cells and for the corner ghost cell. The grid is depicted 
in Figure \ref{fig:maillage2} with the interior cells in blue, the boundary ghost cells in red and the corner ghost cell in green.

\begin{figure}[htbp]
\begin{center}
\begin{tikzpicture}[scale=1.5,>=latex]
\draw [ultra thin, dotted, fill=blue!20] (-3,-1.5) rectangle (3.5,2);
\draw [ultra thin, dotted, fill=red!20] (-3.5,-1.5) rectangle (-3,2);
\draw [ultra thin, dotted, fill=red!20] (-3,-2) rectangle (3.5,-1.5);
\draw [ultra thin, dotted, fill=green!20] (-3.5,-2) rectangle (-3,-1.5);
\draw [thin, dashed] (-3.5,-2) grid [step=0.5] (3.5,2);
\draw[thick,black,->] (-4,-1.5) -- (4,-1.5) node[below] {$x$};
\draw[thick,black,->] (-3,-2.2)--(-3,2.5) node[right] {$y$};
\draw (-3.7,-2.12) node[right]{$-\Delta y$};
\draw (-3.5,-0.85) node[right]{$\Delta y$};
\draw (-3.7,1.15) node[right, fill=red!20]{$k \, \Delta y$};
\draw [thin, dashed] (-3.5,1) grid [step=0.5] (-3,1.5);
\draw (-3.6,-1.68) node{$-\Delta x$};
\draw (-3.18,-1.68) node[right]{$\, \, \, 0$};
\draw (-2.6,-1.68) node{$\, \, \, \, \, \Delta x$};
\draw (2.35,-1.68) node{$\, \, \, \, \, j \, \Delta x$};
\node (centre) at (-3.5,-1.5){$\times$};
\node (centre) at (-3,-2){$\times$};
\node (centre) at (-3,-1.5){$\times$};
\node (centre) at (-2.5,-1.5){$\times$};
\node (centre) at (2.5,-1.5){$\times$};
\node (centre) at (-3,1){$\times$};
\node (centre) at (-3,-1){$\times$};
\draw (2.75,1.25) node{$u_{j,k}^n$};
\end{tikzpicture}
\caption{The grid for the quarter-space problem. Interior cells appear in blue, the boundary (ghost) cells appear in red and the corner cell appears in green. 
The value $u_{j,k}^n$ corresponds to the approximation in the cell $[j \, \Delta x,(j+1) \, \Delta x) \times [k \, \Delta y,(k+1) \, \Delta y)$.}
\label{fig:maillage2}
\end{center}
\end{figure}
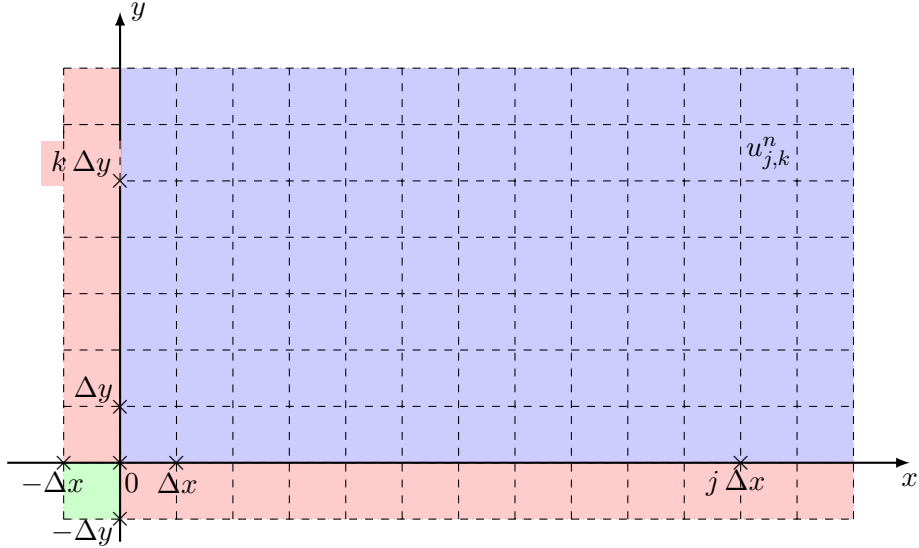

Following the analysis made in the half-space for boundary ghost cells, we will impose extrapolation boundary conditions:
\begin{subequations}
\begin{align}
\label{extrapolationk}
\forall \, n \in \N \, ,\quad \forall \, k \in \N \, ,\quad u_{-1,k}^n \, &= \, u_{0,k}^n \, ,\\
\label{extrapolationj}
\forall \, n \in \N \, ,\quad \forall \, j \in \N \, ,\quad u_{j,-1}^n \, &= \, u_{j,0}^n \, ,
\end{align}
\end{subequations}
in conjunction with the numerical scheme \eqref{LW} for $(j,k) \in \mathfrak{I}$ (that is, for interior values).

Finally because the determination of $u_{0,0}^{n+1}$ requires the value of $u^n_{-1,-1}$ we close the scheme with the prescription of a corner condition. 
Let us discuss a little the choice of such condition. Because \eqref{extrapolationj} and \eqref{extrapolationk} have stencil one, it seems natural to use a 
condition of stencil one also for the corner condition. So that we shall determine $u_{-1,-1}^n$ from the value of one of his three neighbours $u_{-1,0}^n$, 
$u_{0,0}^n$ or $u_{0,-1}^n$. However because of the boundary conditions \eqref{extrapolationj} and \eqref{extrapolationk} these three terms are equal. 
Consequently we impose for the corner condition that 
\begin{align}
\label{corner}
\forall \, n \in \N \, \quad u_{-1,-1}^n \, &= \, u_{0,0}^n \, ,
\end{align}
which is (formally) consistent for smooth solutions to the transport equation.

Other corner conditions such as $u_{-1,-1}^n \, = \, \delta\,u_{0,0}^n$, $\delta \in \mathbb{R}$, or with wider stencils will be discussed in a forthcoming 
contribution. Let us however point that the numerical results of Section \ref{part-num} meet with the intuition that a large choice of $\delta$ leads to 
instability of the associated scheme.\\

Like for the half-space, it is useful below to define the Hilbert space $\mathfrak{H}$ of real valued, square integrable sequences on $\mathfrak{J}$ that 
satisfy the extrapolation boundary conditions \eqref{extrapolationk} and \eqref{extrapolationj}, and the corner condition \eqref{corner} that is:
\begin{equation}
\nonumber
\mathfrak{H} \, := \, \Big\{ u \in \ell^2(\mathfrak{J};\R) \quad |\quad \forall \, k \in \N\cup\lbrace -1\rbrace \, ,\quad u_{-1,k} \, = \, u_{0,k} 
\quad \text{ and } \quad \forall \, j \in \N \, ,\quad u_{j,-1} \, = \, u_{j,0} \Big\} \, .
\end{equation}
Once again the norm on $\mathfrak{H}$ is defined by using only the interior values of $u$:
\begin{equation}
\nonumber
\forall \, u \in \mathfrak{H} \, ,\quad \| \, u \, \|_{\mathfrak{H}}^2 \, := \, \sum_{(j,k) \in \mathfrak{I}} \, u_{j,k}^2 \, .
\end{equation}
In this setting, our main result is the following.

\begin{theorem}
\label{thm2}
Let $a\,<\,0$ and $b \,<\,0$, and let the parameters $\lambda$, $\mu$ satisfy the stability condition \eqref{CFLlaxwendroff}. Then the numerical scheme 
consisting of \eqref{LW} on $\mathfrak{I}$ with the extrapolation numerical boundary condition \eqref{extrapolationk}, \eqref{extrapolationj}, the corner 
condition \eqref{corner} and an initial condition $u^0 \in \mathfrak{H}$ satisfies the following property: for any $n \in \N$, there holds:
\begin{align}
\|\, u^{n+1} \, \|_{\mathfrak{H}}^2 \, -& \, \|\, u^n \, \|_{\mathfrak{H}}^2 \, + \, \frac{\lambda \, | \, a \, |}{8} \, \sum_{k \geq 0} \, (u_{0,k}^n)^2 \, + \, 
\frac{\mu \, | \, b \, |}{8}\, \, \sum_{j \geq 0} \, (u_{j,0}^n)^2 \notag \\
&+ \,\frac{(\mu \, b)^2}{32}  \,\left((\lambda \, a)^2 + (\mu \, b)^2\right) \, \sum_{k \geq 0} \, (\Delta_2 \, u_{0,k}^n)^2 \, + \, 
\frac{(\lambda \, a)^2}{32} \,\left((\lambda \, a)^2 + (\mu \, b)^2\right) \, \sum_{j \geq 0} \, (\Delta_1 \, u_{j,0}^n)^2\, \le \, 0 \, .\label{estimthm2}
\end{align}
In particular, there holds:
\begin{multline*}
\sup_{n \in \N} \, \| \, u^n \, \|_{\mathfrak{H}}^2 \, + \, \frac{\,\lambda \, | \, a \, |}{8} \, \sum_{n \in \N} \, \sum_{k \geq 0} \, (u_{0,k}^n)^2 \, + \, 
\frac{\,\mu \, | \, b \, |}{8} \, \sum_{n \in \N} \, \sum_{j \geq 0} \, (u_{j,0}^n)^2 \\
+ \,\frac{(\mu \, b)^2}{32}  \,\left((\lambda \, a)^2 + (\mu \, b)^2\right) \, \sum_{n \in \N} \, \sum_{k \geq 0} \, (\Delta_2 \, u_{0,k}^n)^2 \, + \, 
\frac{(\lambda \, a)^2}{32} \,\left((\lambda \, a)^2 + (\mu \, b)^2\right) \, \sum_{n \in \N} \, \sum_{j \geq 0} \, (\Delta_1 \, u_{j,0}^n)^2 \\ 
\le \, 2 \, \, \|\, u^0 \, \|_{\mathfrak{H}}^2 \, .
\end{multline*}
\end{theorem}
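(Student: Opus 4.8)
The plan is to reproduce the three-step argument of Section \ref{section3} --- energy balance, control of the skew-adjoint/self-adjoint cross terms, and the key bound on $\| w^n \|$ --- but now with boundary contributions living on both edges $\{j=0\}$ and $\{k=0\}$ and with additional terms concentrated at the numerical corner $(j,k)=(0,0)$. Writing $u^{n+1}=u^n-w^n+v^n$ on $\mathfrak{I}$ with $v^n,w^n$ defined by \eqref{defvwn'}, the starting identity is
\begin{multline*}
\| u^{n+1} \|_{\ell^2(\mathfrak{I})}^2 - \| u^n \|_{\ell^2(\mathfrak{I})}^2 = \| w^n \|_{\ell^2(\mathfrak{I})}^2 + \| v^n \|_{\ell^2(\mathfrak{I})}^2 - 2 \langle u^n ; w^n \rangle_{\ell^2(\mathfrak{I})} \\
+ 2 \langle u^n ; v^n \rangle_{\ell^2(\mathfrak{I})} - 2 \langle v^n ; w^n \rangle_{\ell^2(\mathfrak{I})} \, .
\end{multline*}
First I would extend Lemma \ref{lem5} to $\mathfrak{H}$: since $u\in\mathfrak{H}$ enforces the extrapolation relation on \emph{each} edge and, as a consequence, the corner relation \eqref{corner} (indeed $u_{-1,-1}=u_{0,-1}=u_{0,0}$), the one-dimensional discrete integration by parts underlying Lemma \ref{lem5} applies slice by slice in both the $j$- and the $k$-direction. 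I would also record the two averaging identities $\| A_{1,+}U \|_{\ell^2(\mathfrak{I})}^2 + \tfrac14 \| D_{1,+}U \|_{\ell^2(\mathfrak{I})}^2 = \| U \|_{\ell^2(\mathfrak{I})}^2 - \tfrac12\sum_{k\ge0}U_{0,k}^2$ and its $k$-counterpart, which are what will ultimately produce the trace terms $\sum_{k\ge0}(\Delta_2 u_{0,k}^n)^2$ and $\sum_{j\ge0}(\Delta_1 u_{j,0}^n)^2$ in \eqref{estimthm2}.

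Next I would compute the two genuinely new scalar products. The term $2\langle u^n;v^n\rangle_{\ell^2(\mathfrak{I})}$ is handled exactly as in the first half of Lemma \ref{lem4}: from $2u_{j,k}v_{j,k}=-\alpha D_{1,+}\{u_{j-1,k}u_{j,k}\}-\beta D_{2,+}\{u_{j,k-1}u_{j,k}\}$, telescoping in $j$ and then in $k$ gives $\alpha\sum_{k\ge0}(u_{0,k}^n)^2+\beta\sum_{j\ge0}(u_{j,0}^n)^2$, which is $\le0$ because $a,b<0$. The term $2\langle v^n;w^n\rangle_{\ell^2(\mathfrak{I})}$ is the core of the proof and the analogue of the computation following \eqref{decompositionvw-1}: one expands the scalar product and removes the purely self-adjoint/skew-adjoint pieces by discrete integration by parts. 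The crucial difference with the half-space case is that several cancellations there relied on the tangential variable running over all of $\Z$, whereas here both tangential variables run over $\N$; each such step therefore leaves behind a trace contribution at $\{j=0\}$ or $\{k=0\}$ and, in the doubly-summed pieces (those involving $D_{1,0}D_{2,0}$ or $\Delta_1\Delta_2$), a corner contribution. Using \eqref{corner} --- equivalently $u\in\mathfrak{H}$ --- one checks that every such corner term either vanishes or merges with the edge sums already present, leaving an expression of the form $-\tfrac{\lambda|a|(\mu b)^2}{2}\sum_k u_{0,k}^n\Delta_2 u_{0,k+1}^n-\tfrac{\mu|b|(\lambda a)^2}{2}\sum_j u_{j,0}^n\Delta_1 u_{j+1,0}^n$ plus lower order boundary terms. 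From here, Cauchy--Schwarz and Young's inequality together with \eqref{CFLlaxwendroff} and the elementary bounds $\lambda|a|(\mu b)^2\le\tfrac12((\lambda a)^2+(\mu b)^2)$, $\mu|b|(\lambda a)^2\le\tfrac12((\lambda a)^2+(\mu b)^2)$ yield the quarter-space analogue of Corollary \ref{cor2}, namely a lower bound for $2\langle v^n;w^n\rangle-2\langle u^n;v^n\rangle$ by the four nonnegative boundary sums appearing on the left-hand side of \eqref{estimthm2}.

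It then remains to establish the quarter-space analogues of Lemma \ref{lem6} and Proposition \ref{prop2}. For $\|v^n\|^2-2\langle u^n;w^n\rangle$ one follows the proof of Lemma \ref{lem6}, now using the $\mathfrak{H}$-versions of \eqref{lem5-3}--\eqref{lem5-4} in \emph{both} directions; since extrapolation holds on both edges, the identity $\|v^n\|^2-2\langle u^n;w^n\rangle=-\tfrac{(\lambda a)^2}{4}\|\Delta_1 u^n\|^2-\tfrac{(\mu b)^2}{4}\|\Delta_2 u^n\|^2-\tfrac{(\lambda a)^2+(\mu b)^2}{4}\|D_{1,+}D_{2,+}u^n\|^2$ should persist up to nonnegative boundary corrections that can only help. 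For $\|w^n\|^2$ one repeats Steps 1 and 2 of the proof of Proposition \ref{prop1}/\ref{prop2}: expand $4\|w^n\|_{\ell^2(\mathfrak{I})}^2$, use the crude estimate on $\langle\Delta_1 u;\Delta_2 u\rangle$ and inequality \eqref{inegalite-1}, substitute the $\mathfrak{H}$-version of \eqref{formulelem2-3}, and bound the remaining cross terms; now the auxiliary quantities $B_1,B_2$ must be estimated along both edges (as in Lemmas \ref{lem7}--\ref{lem8}), and the averaging identities recorded above send the two trace terms $\tfrac{(\lambda a)^2+(\mu b)^2}{2}\sum_k(\Delta_2 u_{0,k}^n)^2$ and $\tfrac{(\lambda a)^2+(\mu b)^2}{2}\sum_j(\Delta_1 u_{j,0}^n)^2$ onto the left-hand side of the estimate for $4\|w^n\|^2$. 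Inserting these three ingredients into the energy balance and invoking \eqref{CFLlaxwendroff} exactly as at the end of Section \ref{section3} gives \eqref{estimthm2}, and telescoping over $n\in\N$ gives the second assertion. The main obstacle is the corner bookkeeping in the computations of $2\langle v^n;w^n\rangle$ and of $4\|w^n\|^2$: one must verify that, under the corner condition \eqref{corner}, every term supported near $(0,0)$ has a sign compatible with \eqref{estimthm2} --- and this is precisely the step at which an inconsistent corner rule would ruin the estimate, in agreement with the numerical evidence of Section \ref{part-num}. The somewhat smaller constants in \eqref{estimthm2} compared with Theorem \ref{thm1} (e.g.\ $\lambda|a|/8$ instead of $\lambda|a|/2$) reflect the fact that the available dissipation must now be shared between the two edges and the corner.
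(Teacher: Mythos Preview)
Your overall architecture is the same as the paper's --- the three ingredients you list are exactly Lemmas~\ref{lem9}, \ref{lem10} and Proposition~\ref{prop3}, and the averaging identities with boundary remainder that you record are precisely what drives the proof of Proposition~\ref{prop3}. The computation of $2\langle u^n;v^n\rangle_{\ell^2(\mathfrak{I})}$ is correct, and your description of how $2\langle v^n;w^n\rangle_{\ell^2(\mathfrak{I})}$ and $4\|w^n\|_{\ell^2(\mathfrak{I})}^2$ are handled is accurate in outline.

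There is, however, one concrete error. In the quarter-space analogue of Lemma~\ref{lem6} you assert that the half-space identity ``should persist up to nonnegative boundary corrections that can only help''. This is false: the paper's Lemma~\ref{lem10} gives
\[
\|v^n\|_{\ell^2(\mathfrak{I})}^2 - 2\langle u^n;w^n\rangle_{\ell^2(\mathfrak{I})}
= -\tfrac{(\lambda a)^2}{4}\|\Delta_1 u^n\|^2 - \tfrac{(\mu b)^2}{4}\|\Delta_2 u^n\|^2 - \tfrac{(\lambda a)^2+(\mu b)^2}{4}\|D_{1,+}D_{2,+}u^n\|^2
\;+\;\lambda|a|\,\mu|b|\,(u_{0,0}^n)^2 \, ,
\]
and the corner correction $+\lambda|a|\,\mu|b|\,(u_{0,0}^n)^2$ has the \emph{wrong} sign. (It arises because the cross term $2\alpha\beta\langle D_{1,0}u;D_{2,0}u\rangle_{\ell^2(\mathfrak{I})}$, which cancelled perfectly against $2\alpha\beta\langle u;D_{1,0}D_{2,0}u\rangle_{\ell^2(\mathfrak{I})}$ in the half-space, now leaves a residual $-\alpha\beta\,u_{0,0}^2=\lambda|a|\mu|b|\,u_{0,0}^2$ after the discrete integration by parts in both variables.) This harmful term cannot be discarded; it must be absorbed by the trace estimate coming from $2\langle u^n;v^n\rangle$, via the CFL bound $\lambda|a|\,\mu|b|\le(\lambda|a|+\mu|b|)/(2\sqrt{2})$ and the inclusion $(u_{0,0}^n)^2\le\sum_{k\ge0}(u_{0,k}^n)^2$, $(u_{0,0}^n)^2\le\sum_{j\ge0}(u_{j,0}^n)^2$. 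It is precisely this absorption --- not a vague ``sharing of dissipation'' --- that degrades $\lambda|a|/2$ to $\lambda|a|/8$ in~\eqref{estimthm2}. Your closing remark that ``every term supported near $(0,0)$ has a sign compatible with~\eqref{estimthm2}'' is therefore not correct as stated; some corner terms have unfavourable signs and the proof must show they are dominated. A similar (milder) issue occurs in $2\langle v^n;w^n\rangle$: the extra corner pieces $-\tfrac{\lambda|a|(\mu b)^2}{2}u_{0,0}^n\Delta_2 u_{0,0}^n$ and $-\tfrac{(\lambda a)^2\mu|b|}{2}u_{0,0}^n\Delta_1 u_{0,0}^n$ do not ``merge with the edge sums already present'' (those sums involve $\Delta_2 u_{0,k+1}$ with $k\ge0$, hence start at $k=1$), and need their own Young-type treatment.
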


Theorem \ref{thm2} shows that the corner condition \eqref{corner} maintains stability for the numerical scheme \eqref{LW} when used in combination with 
the Neumann boundary condition on each side of the boundary. Based on this stability estimate, it is very likely that the numerical scheme \eqref{LW} with 
the extrapolation conditions \eqref{extrapolationk}, \eqref{extrapolationj} and \eqref{corner} on the boundary is convergent for smooth enough initial data. 
However, since the numerical boundary conditions will produce first order consistency errors, the second order accuracy of the Lax-Wendroff scheme 
\eqref{LW} will be deteriorated. We shall explore the construction of second order numerical boundary conditions in a future work and thus postpone 
a complete convergence result to this setting.

\subsection{Proof of Theorem \ref{thm2}}

The proof of Theorem \ref{thm2} follows closely the one in the half-space exposed in Paragraph \ref{paragraphpreuve} except that we will have to deal with 
more boundary terms. However let us point that the choice of the particular corner condition \eqref{corner} simplifies a lot the analysis of such terms since 
many will actually vanish. This would no longer be true for more elaborate corner conditions.

The three main ingredients of the proof are the analogues of Lemmas \ref{lem4} and \ref{lem6} and of Proposition \ref{prop2} that are stated just below.

\begin{lemma}
\label{lem9}
Let $a<0$ and $b < 0$. Let $u^n \in \mathfrak{H}$, and let the sequences $v^n,w^n$ be defined on the set of interior indices $\mathfrak{I}$ by \eqref{defvwn'}. 
Then there holds:
\begin{subequations}
\label{termes-antisym9}
\begin{align}
2\, \langle u^n\, ;\, v^n\rangle_{\ell^2(\mathfrak{I})} \,= \, & \, - \, \lambda\,\vert\, a\,\vert \, \sum_{k\geq 0} \, (u_{0,k}^n)^2 \, - \, 
\mu\,\vert \, b\,\vert \, \sum_{j\geq 0} \, (u_{j,0}^n)^2 \, ,\label{lem9-antisym1} \\
\label{lem9-antisym2}
2 \, \langle v^n \, ; \, w^n \rangle_{\ell^2(\mathfrak{I})} \, = \, & \,-\,\frac{\lambda\,\vert \, a\vert \, (\mu \, b)^2}{2}\,\sum_{k\geq 0}\, u_{0,k}^n \, \Delta_2\, u_{0,k+1}^n 
\, - \, \frac{(\lambda\, a)^2\,\mu\,\vert\,b\, \vert}{2}\,\sum_{j\geq 0}\, u_{j,0}^n \, \Delta_1\, u_{j+1,0}^n \\
& \,-\,\frac{\lambda\,\vert \, a\vert \, (\mu \, b)^2}{2} \, u_{0,0}^n \, \Delta_2\, u_{0,0}^n \, - \, 
\frac{(\lambda\, a)^2\,\mu\,\vert\,b\, \vert}{2} \, u_{0,0}^n \, \Delta_1\, u_{0,0}^n \, .\notag
\end{align}
\end{subequations}
\end{lemma}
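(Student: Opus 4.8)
The plan is to adapt, essentially line by line, the proof of Lemma \ref{lem4} from the half-space case. The only structural novelty is that the discrete integrations by parts (Abel summations) used there now produce boundary contributions along \emph{both} sides $\{j=0\}$ and $\{k=0\}$ of $\mathfrak{I}=\N\times\N$, together with a genuine corner contribution localized at $(j,k)=(0,0)$. Throughout I would keep the superscript $n$ silent and set $\alpha:=\lambda\,a$, $\beta:=\mu\,b$. The decisive simplification, to be exploited repeatedly, is that the corner condition \eqref{corner} together with \eqref{extrapolationj}--\eqref{extrapolationk} forces $u_{-1,-1}=u_{-1,0}=u_{0,-1}=u_{0,0}$, so that all the would-be ``cross'' corner terms either vanish or collapse onto the two diagonal corner terms appearing in \eqref{lem9-antisym2}.

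For \eqref{lem9-antisym1} I would use the pointwise identities $2\,u_{j,k}\,D_{1,0}u_{j,k}=D_{1,+}\{u_{j-1,k}\,u_{j,k}\}$ and $2\,u_{j,k}\,D_{2,0}u_{j,k}=D_{2,+}\{u_{j,k-1}\,u_{j,k}\}$, so that $2\,u_{j,k}\,v_{j,k}=-\alpha\,D_{1,+}\{u_{j-1,k}\,u_{j,k}\}-\beta\,D_{2,+}\{u_{j,k-1}\,u_{j,k}\}$. Summing over $(j,k)\in\mathfrak{I}$, telescoping the first sum in $j$ (for each fixed $k\ge 0$) and the second in $k$ (for each fixed $j\ge 0$), one gets $2\,\langle u\,;\,v\rangle_{\ell^2(\mathfrak{I})}=\alpha\sum_{k\ge0}u_{-1,k}\,u_{0,k}+\beta\sum_{j\ge0}u_{j,-1}\,u_{j,0}$, and \eqref{extrapolationk}, \eqref{extrapolationj} together with $a,b<0$ yield \eqref{lem9-antisym1}. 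No isolated corner term appears here, because each telescoping delivers a complete trace along the corresponding side (the $k=0$, resp. $j=0$, slice being already included in the sum).

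For \eqref{lem9-antisym2} I would first record the quarter-space analogues of Lemma \ref{lem5} that remain valid on $\mathfrak{I}$ for sequences of $\mathfrak{H}$: $\Delta_1$ and $\Delta_2$ are selfadjoint on $\ell^2(\mathfrak{I})$, $\langle D_{1,0}u\,;\,\Delta_1 u\rangle_{\ell^2(\mathfrak{I})}$ reduces to $-\tfrac12\sum_{k\ge0}(u_{0,k}-u_{-1,k})^2=0$ by \eqref{extrapolationk}, and symmetrically $\langle D_{2,0}u\,;\,\Delta_2 u\rangle_{\ell^2(\mathfrak{I})}=0$ by \eqref{extrapolationj}; moreover $D_{2,0}u\in\mathfrak{H}$ and $D_{1,0}u\in\mathfrak{H}$, the only nontrivial check being at the corner, where it is precisely \eqref{corner} that is needed. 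I would then expand $2\,\langle v\,;\,w\rangle_{\ell^2(\mathfrak{I})}$ by \eqref{defvwn'} into the same family of bilinear terms as in \eqref{decompositionvw-1}. The purely ``direction~$1$'' and ``direction~$2$'' terms vanish by the identities just recalled; the mixed terms of the type $\langle D_{2,0}u\,;\,\Delta_1 u\rangle$ and $\langle \Delta_1\Delta_2 u\,;\,D_{2,0}u\rangle$, which were killed outright in the half-space by tangential skew-selfadjointness, now leave boundary residues along $\{k=0\}$, and symmetrically their $1\leftrightarrow 2$ mirror images leave residues along $\{j=0\}$. Carrying out the remaining normal-direction integrations by parts, exactly as in the passage from \eqref{decompositionvw-3} to \eqref{decompositionvw-4} in the half-space, produces three kinds of contributions: the half-space boundary sum $-\tfrac{\lambda|a|\beta^2}{2}\sum_{k\ge0}u_{0,k}\,\Delta_2 u_{0,k+1}$ along $\{j=0\}$; its mirror $-\tfrac{\alpha^2\mu|b|}{2}\sum_{j\ge0}u_{j,0}\,\Delta_1 u_{j+1,0}$ along $\{k=0\}$; and double-boundary terms sitting at $(0,0)$.

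I expect the genuine obstacle to be precisely this last piece of bookkeeping. An Abel summation in $j$ outputs a $\{j=0\}$ trace that is itself a sum over $k\ge 0$, whose $k=0$ slice then interacts with the $\{k=0\}$ boundary terms generated by the direction-$2$ manipulations, so the corner value $u_{-1,-1}$ enters through several channels with competing signs. The work is to verify that, after substituting $u_{-1,-1}=u_{0,0}$ (and $u_{-1,0}=u_{0,-1}=u_{0,0}$), all cross contributions cancel and the net corner term is exactly $-\tfrac{\lambda|a|\beta^2}{2}\,u_{0,0}\,\Delta_2 u_{0,0}-\tfrac{\alpha^2\mu|b|}{2}\,u_{0,0}\,\Delta_1 u_{0,0}$, i.e. the last line of \eqref{lem9-antisym2}. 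This is where the choice \eqref{corner} is essential: a stencil-one corner condition $u_{-1,-1}^n=\delta\,u_{0,0}^n$ with $\delta\neq 1$ would leave uncancelled corner terms of indefinite sign, in agreement with the instability observed numerically.
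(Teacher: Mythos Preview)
Your plan is essentially the paper's own proof: telescoping for \eqref{lem9-antisym1}, then the full bilinear expansion of $2\langle v;w\rangle$ followed by term-by-term discrete integration by parts, with the corner condition \eqref{corner} absorbing the residual terms at $(0,0)$. Two small corrections to your narrative. First, the sequences $D_{1,0}u$ and $D_{2,0}u$ are \emph{not} elements of $\mathfrak{H}$ (they are not even defined on all of $\mathfrak{J}$, since e.g.\ $(D_{2,0}u)_{j,-1}$ requires $u_{j,-2}$); the paper handles this by working with the weaker identities \eqref{D10}, which only need the sequence on $(\{-1\}\cup\N)\times\N$ or $\N\times(\{-1\}\cup\N)$, and by checking the corner value directly. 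Second, the term $\langle \Delta_1\Delta_2 u\,;\,\alpha D_{1,0}u+\beta D_{2,0}u\rangle_{\ell^2(\mathfrak{I})}$ does not leave a residue: it still vanishes exactly in the quarter-space (this is the orange term in the paper), so the corner contribution in \eqref{lem9-antisym2} comes solely from rewriting the one-dimensional trace sums $\sum_{k\ge0}D_{2,-}u_{0,k}\,D_{2,+}u_{0,k}$ and $\sum_{j\ge0}D_{1,-}u_{j,0}\,D_{1,+}u_{j,0}$ via one last Abel summation, whose $k=0$ (resp.\ $j=0$) boundary term is $-u_{0,0}\Delta_2 u_{0,0}$ (resp.\ $-u_{0,0}\Delta_1 u_{0,0}$).
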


\begin{lemma}
\label{lem10}
Let $a<0$ and $b < 0$. Let $u^n \in \mathfrak{H}$, and let the sequences $v^n,w^n \in \ell^2(\mathfrak{I};\R)$ be defined by \eqref{defvwn'}. Then there holds:
\begin{align*}
\| \, v^n \, \|_{\ell^2(\mathfrak{I})}^2 \, - \, 2 \, \langle u^n \, ; \, w^n \rangle_{\ell^2(\mathfrak{I})} \, = \, 
& \, - \, \dfrac{(\lambda \, a)^2}{4} \, \| \, \Delta_1 \, u^n \, \|_{\ell^2(\mathfrak{I})}^2 \, - \, \dfrac{(\mu \, b)^2}{4} \, \| \, \Delta_2 \, u^n \, \|_{\ell^2(\mathfrak{I})}^2 \\
& \, - \, \dfrac{(\lambda \, a)^2 +(\mu \, b)^2}{4} \, \| \, D_{1,+} \, D_{2,+} \, u^n \, \|_{\ell^2(\mathfrak{I})}^2 \, + \, 
\lambda\, \vert \, a\,\vert \, \mu\, \vert\, b\,\vert \, (u_{0,0}^n)^2\, .
\end{align*}
\end{lemma}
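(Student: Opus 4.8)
The plan is to mimic the proof of Lemma \ref{lem6} (the half-space analogue) while carefully tracking the extra boundary contributions coming from the side $\{k=0\}$ and from the corner $(0,0)$. I would drop the superscript $n$, write $\alpha:=\lambda a$, $\beta:=\mu b$, and start from the expansion of $\|v\|_{\ell^2(\mathfrak{I})}^2-2\langle u;w\rangle_{\ell^2(\mathfrak{I})}$ obtained by plugging in the definitions \eqref{defvwn'}. As in the proof of Lemma \ref{lem6}, the two ``cross'' terms $2\alpha\beta\langle D_{1,0}u;D_{2,0}u\rangle+2\alpha\beta\langle u;D_{1,0}D_{2,0}u\rangle$ should now cancel only up to boundary terms: summing first in one variable and using skew-selfadjointness of $D_{1,0}$ (resp. $D_{2,0}$) on $\ell^2(\Z)$ no longer applies because both indices live on $\N$. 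Instead I would use the discrete Leibniz identity $2(D_{1,0}U)(D_{2,0}V)+2U(D_{1,0}D_{2,0}V)=D_{1,+}\{\cdots\}+D_{2,+}\{\cdots\}$-type telescoping to see that the two cross terms combine into a pure boundary sum, which upon using the extrapolation relations and the corner condition \eqref{corner} collapses to the single term $\lambda|a|\,\mu|b|\,(u_{0,0})^2$ (here the specific choice $u_{-1,-1}=u_{0,0}$ is exactly what makes the stray corner contributions telescope cleanly — with a factor $\delta\neq1$ they would not).

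Next I would handle the self-adjoint pieces. For the $\alpha^2$ and $\beta^2$ terms I would invoke the one-directional identities: along the $x$-direction, $\|D_{1,0}u\|_{\ell^2(\mathfrak{I})}^2+\langle u;\Delta_1 u\rangle_{\ell^2(\mathfrak{I})}=-\tfrac14\|\Delta_1 u\|_{\ell^2(\mathfrak{I})}^2$, which is the $\mathfrak{I}$-version of \eqref{lem5-3}--\eqref{lem5-4} and holds because $u\in\mathfrak{H}$ forces $D_{1,-}u_{0,k}=0$ for all $k\ge 0$ \emph{and} $D_{1,-}u_{0,-1}=0$; symmetrically in the $y$-direction using $u_{j,-1}=u_{j,0}$. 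These two computations produce no net boundary term. The only genuinely new work is the mixed fourth-order term $-\tfrac{\alpha^2+\beta^2}{4}\langle u;\Delta_1\Delta_2 u\rangle_{\ell^2(\mathfrak{I})}$: I would rewrite $\Delta_2=-(D_{2,+})^*D_{2,+}$ when summing first in $k$ (valid since $D_{2,+}u_{j,-1}=0$), which turns it into $\tfrac{\alpha^2+\beta^2}{4}\langle D_{2,+}u;\Delta_1 D_{2,+}u\rangle_{\ell^2(\mathfrak{I})}$, then apply the $x$-direction analogue of \eqref{lem5-3} to $D_{2,+}u$. The subtlety is that $D_{2,+}u$ lies in $\mathfrak{H}$ in the $x$-variable (because $D_{2,+}$ is tangential to $\{j=0\}$ and $u_{-1,k}=u_{0,k}$ for all $k\ge -1$, including $k=-1$), so $(D_{1,-}D_{2,+}u)_{0,k}=0$ and the relation goes through yielding $-\tfrac{\alpha^2+\beta^2}{4}\|D_{1,+}D_{2,+}u\|_{\ell^2(\mathfrak{I})}^2$ with, again, no residual boundary term.

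Collecting everything gives exactly the claimed identity, with the $+\lambda|a|\,\mu|b|\,(u_{0,0}^n)^2$ the sole surviving boundary contribution. The main obstacle, and the step to be most careful with, is the cross-term cancellation: in the half-space case one ``fixed $k$ and used 1D skew-adjointness'' to kill both cross terms outright, but in the quarter-space one must instead do a genuine two-dimensional summation by parts, keeping all four corner/edge contributions, and then verify that the extrapolation conditions \eqref{extrapolationk}--\eqref{extrapolationj} together with the corner condition \eqref{corner} conspire to leave precisely $\lambda|a|\,\mu|b|\,(u_{0,0})^2$ and nothing along the edges. A convenient bookkeeping device is to note $D_{1,0}=A_{1,-}D_{1,+}$, $D_{2,0}=A_{2,-}D_{2,+}$ and exploit that $D_{1,+}D_{2,+}u$ vanishes whenever $j=-1$ or $k=-1$, so that all edge sums of the form $\sum_k (\cdots)_{0,k}$ or $\sum_j(\cdots)_{j,0}$ that arise from shifting indices either telescope to the corner or cancel in pairs. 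Once Lemma \ref{lem10} (together with Lemma \ref{lem9}, proved the same way with four boundary terms instead of one) and the quarter-space version of Proposition \ref{prop2} are in hand, Theorem \ref{thm2} follows by the same bookkeeping as in the half-space: combining the energy identity, Lemma \ref{lem10}, the quarter-space Proposition, and the Cauchy–Schwarz/Young step of Corollary \ref{cor2} applied on each of the two edges, with the extra $\lambda|a|\mu|b|(u_{0,0}^n)^2$ absorbed harmlessly into the negative edge traces.
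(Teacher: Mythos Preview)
Your proposal is correct and follows essentially the same route as the paper's proof: expand, isolate the cross terms, reduce them to the single corner contribution $\alpha\beta\,u_{0,0}^2$, handle the $\alpha^2$ and $\beta^2$ pieces via the quarter-space analogues of \eqref{lem5-3}--\eqref{lem5-4} (stated as Lemma \ref{lemaux} in the paper), and treat the mixed $\langle u;\Delta_1\Delta_2 u\rangle$ term by two successive integrations by parts whose boundary contributions vanish thanks to \eqref{extrapolationk}, \eqref{extrapolationj}, \eqref{corner}. The only stylistic difference is in the cross-term step: the paper does not look for a pointwise Leibniz/telescoping identity but instead writes the general integration-by-parts formula
\[
\langle D_{1,0}V;W\rangle_{\ell^2(\mathfrak{I})}=-\langle V;D_{1,0}W\rangle_{\ell^2(\mathfrak{I})}-\tfrac12\sum_{k\ge 0}V_{-1,k}W_{0,k}-\tfrac12\sum_{k\ge 0}V_{0,k}W_{-1,k},
\]
applies it with $V=u$, $W=D_{2,0}u$, and then computes the two boundary sums directly using the extrapolation and corner conditions to arrive at $\alpha\beta\,u_{0,0}^2$. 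Your telescoping device would produce the same boundary data and is an equally valid bookkeeping choice.
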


\begin{proposition}
\label{prop3}
Let $u^n \in \mathfrak{H}$, and let the sequence $w^n \in \ell^2(\mathfrak{I};\R)$ be defined by \eqref{defwjkn'}. Then there holds:
\begin{align}
\label{estimprop3}
4 \, \| \, w^n \, \|_{\ell^2(\mathfrak{I})}^2 \, + \, & \, (\mu \, b)^2 \, \dfrac{(\lambda \, a)^2 +(\mu \, b)^2}{2} \, \sum_{k\geq 0} \, (\Delta_2 \, u_{0,k}^n)^2 
\, + \, (\lambda \, a)^2 \, \dfrac{(\lambda \, a)^2 +(\mu \, b)^2}{2} \, \sum_{j\geq 0} \, (\Delta_1 \, u_{j,0}^n)^2 \\
\nonumber \le \, & \, 2 \, \big( (\lambda \, a)^2 +(\mu \, b)^2 \big) \, \Big( (\lambda \, a)^2 \, \| \, \Delta_1 \, u^n \, \|_{\ell^2(\mathfrak{I})}^2 
+ \, (\mu \, b)^2 \, \| \, \Delta_2 \, u^n \, \|_{\ell^2(\mathfrak{I})}^2 \\
\nonumber
\, & \qquad \qquad \qquad \qquad + \, \big( (\lambda \, a)^2 +(\mu \, b)^2 \big) \, \| \, D_{1,+} \, D_{2,+} \, u^n \, \|_{\ell^2(\mathfrak{I})}^2 \Big) \, .
\end{align}
\end{proposition}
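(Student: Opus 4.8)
The plan is to replay, almost verbatim, the proof of Proposition \ref{prop2}, now working on the index set $\mathfrak{I}$ and in the Hilbert space $\mathfrak{H}$, and to keep track of the two extra \emph{favorable} boundary terms that the second side $\{k=0\}$ of the numerical boundary produces. Using as before the notation $\alpha := \lambda \, a$, $\beta := \mu \, b$ and dropping the superscript $n$, one starts from the very same expansion of $4\,\|w\|_{\ell^2(\mathfrak{I})}^2$ into nine scalar products as in the proof of Proposition \ref{prop2}. The pink term is handled by the same crude Young inequality, and the green term by the inequality \eqref{inegalite-1} followed by the $\mathfrak{I}$-analogue of \eqref{relationprop2-1}. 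This latter identity still holds \emph{without any boundary term}, because the one-dimensional relation \eqref{lem5-4} is available in both the $j$ and the $k$ directions: for $u \in \mathfrak{H}$ the sequence $D_{2,0} \, u$ satisfies $(D_{2,0} \, u)_{-1,k} = (D_{2,0} \, u)_{0,k}$ for $k \ge 0$, and $D_{1,0} \, u$ satisfies $(D_{1,0} \, u)_{j,-1} = (D_{1,0} \, u)_{j,0}$ for $j \ge 0$, the equalities involving the corner cell being exactly where the corner condition \eqref{corner} is used.

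One then reproduces the passage from the expansion to \eqref{inegprop2-1} and then to the analogue of \eqref{inegprop2-3}, rewriting the remaining $\Delta_1 \, \Delta_2 \, u$ terms (blue, red) and performing Step~1 of the proof of Proposition \ref{prop1} by discrete integration by parts, now in \emph{both} variables. The crucial point is that every boundary term generated at this stage vanishes: the sequences $\Delta_1 \, u$, $\Delta_2 \, u$, $D_{2,+} \, u$, $D_{1,+} \, D_{2,+} \, u$ and $D_{1,0} \, u$ all inherit from $u \in \mathfrak{H}$ the relevant one-sided Neumann relations on each side of the boundary, \emph{including at the corner ghost cell} $(-1,-1)$, precisely because of \eqref{corner}. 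This reduces everything to estimating the analogues of the quantities $B_1,B_2$ of \eqref{defB-halfpsace} on $\mathfrak{I}$, and yields
\[
4 \, \| \, w^n \, \|_{\ell^2(\mathfrak{I})}^2 \, \le \, (\alpha^2+\beta^2) \, \Big( \alpha^2 \, \| \, \Delta_1 \, u^n \, \|_{\ell^2(\mathfrak{I})}^2 + \beta^2 \, \| \, \Delta_2 \, u^n \, \|_{\ell^2(\mathfrak{I})}^2 + (\alpha^2+\beta^2) \, \| \, D_{1,+} \, D_{2,+} \, u^n \, \|_{\ell^2(\mathfrak{I})}^2 \Big) \, + \, \alpha^2 \, B_1 \, + \, \beta^2 \, B_2 \, .
\]

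The genuinely new ingredient is the treatment of $B_1$ and $B_2$. For $B_2$ one simply repeats the proof of Lemma \ref{lem8} word for word: there the average operator $A_{1,+}$ acts in the $j$-direction and crosses the side $\{j=0\}$, which produces $B_2 \le (\alpha^2+\beta^2)\big(\|\Delta_2 \, u\|_{\ell^2(\mathfrak{I})}^2 + \|D_{1,+} \, D_{2,+} \, u\|_{\ell^2(\mathfrak{I})}^2\big) - \tfrac{\alpha^2+\beta^2}{2}\sum_{k\ge 0}(\Delta_2 \, u_{0,k})^2$. For $B_1$, the only difference with Lemma \ref{lem7} is that the average operator $A_{2,+}$ now acts in the $k$-direction, which in the quarter-space carries a boundary at $\{k=0\}$; the same computation then produces $B_1 \le (\alpha^2+\beta^2)\big(\|\Delta_1 \, u\|_{\ell^2(\mathfrak{I})}^2 + \|D_{1,+} \, D_{2,+} \, u\|_{\ell^2(\mathfrak{I})}^2\big) - \tfrac{\alpha^2+\beta^2}{2}\sum_{j\ge 0}(\Delta_1 \, u_{j,0})^2$, which is exactly the image of the bound for $B_2$ under the transposition $(j,\alpha)\leftrightarrow(k,\beta)$, a symmetry left invariant by \eqref{LW}, by $\mathfrak{I}$ and by the corner condition \eqref{corner}. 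Inserting these two bounds in the displayed inequality above and moving the two boundary sums (which come with a favorable sign) to the left-hand side produces exactly \eqref{estimprop3} after recalling $\alpha = \lambda \, a$, $\beta = \mu \, b$.

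The main obstacle is the bookkeeping of boundary contributions. One must check that each of the several discrete integration-by-parts and index-shift manipulations carried over from the half-space proof produces, in the quarter-space, either no boundary term at all or only the two expected sums $\sum_{j\ge 0}(\Delta_1 \, u_{j,0})^2$ and $\sum_{k\ge 0}(\Delta_2 \, u_{0,k})^2$, with the correct sign, and in particular no residual term localized at the corner. This is exactly where the specific corner condition \eqref{corner} is essential: it is the stencil-one choice for which $u \in \mathfrak{H}$ forces all the relevant sequences to satisfy the Neumann relations at the corner ghost cell, so that all the ``cross'' boundary terms cancel; a different corner condition would leave additional, possibly wrong-signed, corner terms. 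A secondary, purely arithmetic point is to keep track of the constants so that the two surviving boundary sums come out with the coefficients $(\mu \, b)^2 \, \tfrac{(\lambda \, a)^2 + (\mu \, b)^2}{2}$ and $(\lambda \, a)^2 \, \tfrac{(\lambda \, a)^2 + (\mu \, b)^2}{2}$, respectively.
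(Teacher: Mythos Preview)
Your proposal is correct and follows essentially the same route as the paper: expand $4\|w\|^2$ as in Proposition~\ref{prop2}, use \eqref{lemaux-4} in both variables to get the $\mathfrak{I}$-analogue of \eqref{relationprop2-1} (with the corner condition \eqref{corner} ensuring the relevant Neumann relations at the corner cell), carry out Step~1 verbatim to reach the analogue of \eqref{inegprop2-3}, and then bound $B_1,B_2$ by the symmetric quarter-space version of Lemma~\ref{lem8}, each now producing a favorable boundary sum. The only minor caveat is your phrase ``word for word'' for $B_2$: the paper does flag two small checks (that $W_{-1,k}=0$ still holds for $k\ge 0$ and that the analogue of \eqref{equation_B2} acquires no spurious boundary term at $\{k=0\}$ since $D_{1,+}D_{2,+}u_{j,-1}=0$), but these are exactly the sort of verifications you already anticipate in your final paragraph.
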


In all what follows, we omit the dependency with respect to $n$ in the notation and we use the short hand notation $\alpha \,:=\,\lambda\, a$, 
$\beta\,:=\,\mu\, b$. As in the half-space geometry, the following technical lemma will be particularly helpful.

\begin{lemma}
\label{lemaux}
Let $U,\,V \in \mathfrak{H}$. Then for $p\in \lbrace 1,2\rbrace$ there hold the relations:
\begin{subequations}
\label{relations-lemaux}
\begin{align}
\langle U \, ; \, \Delta_p \, V \rangle_{\ell^2(\mathfrak{I})} \, = \, & \, \langle \Delta_p \, U \, ; \, V \rangle_{\ell^2(\mathfrak{I})} \, ,\label{lemaux-1} \\
\langle D_{p,0} \, U \, ; \, \Delta_p \, U \rangle_{\ell^2(\mathfrak{I})} \, = \, & \, 0 \, ,\label{lemaux-2} \\
\langle U \, ; \, \Delta_p \, U \rangle_{\ell^2(\mathfrak{I})} \, = \, & \, - \, \| \, D_{p,+} \, U \, \|_{\ell^2(\mathfrak{I})}^2 \, ,\label{lemaux-3} \\
\|\, D_{p,0} \, U \, \|_{\ell^2(\mathfrak{I})} \, = \, & \, \| \, D_{p,+} \, U \, \|_{\ell^2(\mathfrak{I})}^2 \, - \, \dfrac{1}{4} \, 
\| \, \Delta_p \, U \, \|_{\ell^2(\mathfrak{I})}^2 \, ,\label{lemaux-4}
\end{align}
\end{subequations}
\end{lemma}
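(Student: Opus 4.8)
The plan is to note that Lemma \ref{lemaux} is the exact quarter-space counterpart of Lemma \ref{lem5}, and that the passage from the half-space to the quarter-space is essentially cost-free here because each operator $D_{p,0}$, $D_{p,+}$, $\Delta_p$ acts only on the $p$-th index. Consequently, in every one of the four identities \eqref{lemaux-1}--\eqref{lemaux-4}, the discrete integration by parts produces only a single boundary contribution, supported on the edge $\{ j=-1 \}$ of $\mathfrak{I}$ when $p=1$, or on the edge $\{ k=-1 \}$ when $p=2$; the corner ghost value $u_{-1,-1}$ never enters, so the corner condition \eqref{corner} is irrelevant for this lemma. First I would reduce to the case $p=1$ using the symmetry $(j,k)\mapsto(k,j)$, which interchanges the two edge-extrapolation constraints \eqref{extrapolationk} and \eqref{extrapolationj} defining $\mathfrak{H}$ and exchanges $\Delta_1\leftrightarrow\Delta_2$, $D_{1,\bullet}\leftrightarrow D_{2,\bullet}$.

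For $p=1$ I would then fix a transverse index $k\geq 0$ and argue slice by slice: the sequence $(U_{j,k})_{j\in\{-1\}\cup\N}$ is square integrable and satisfies $U_{-1,k}=U_{0,k}$ (and likewise for $V$), which is precisely the one-dimensional situation already handled inside the proof of Lemma \ref{lem5}. Concretely: for \eqref{lemaux-1}, Abel's transform in $j$ leaves the telescopic remainder $-U_{-1,k}V_{0,k}+U_{0,k}V_{-1,k}$, which vanishes for $U,V\in\mathfrak{H}$; for \eqref{lemaux-2}, one uses $(D_{1,0}U_{j,k})(\Delta_1 U_{j,k})=\tfrac12 D_{1,+}\{(D_{1,-}U_{j,k})^2\}$ so that the surviving boundary term is $-\tfrac12(U_{0,k}-U_{-1,k})^2=0$; for \eqref{lemaux-3}, setting $V:=D_{1,+}U$ and summing by parts yields $-\sum_{j\geq 0}V_{j,k}^2-U_{0,k}V_{-1,k}$ with $V_{-1,k}=U_{0,k}-U_{-1,k}=0$; and for \eqref{lemaux-4}, summing the algebraic identity \eqref{equationalgebrique} over $j\geq 0$ leaves the remainder $\tfrac12(U_{0,k}-U_{-1,k})^2=0$. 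Summing each resulting one-dimensional identity over $k\geq 0$ (Fubini) gives \eqref{lemaux-1}--\eqref{lemaux-4}, and the case $p=2$ follows from the symmetry above.

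I do not anticipate a genuine obstacle: the only points requiring care are to check that the transverse summation (over $k\geq 0$ when $p=1$) does not interfere with the normal-direction integration by parts — which it does not, precisely because $D_{p,0}$, $D_{p,+}$, $\Delta_p$ are one-dimensional — and to confirm that the ghost cells actually visited in the summation are exactly the edge cells on which \eqref{extrapolationk} or \eqref{extrapolationj} is imposed. This is exactly the feature of $\mathfrak{H}$ that makes each edge behave as in the half-space case; a corner condition other than \eqref{corner} would couple the two directions and break this clean slice-by-slice reduction, forcing one to track corner contributions throughout the later estimates.
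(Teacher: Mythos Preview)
Your proposal is correct and matches the paper's own approach exactly: the paper simply states that the proof is a straightforward modification of Lemma~\ref{lem5} with sums now running over $k\in\N$ instead of $k\in\Z$, and explicitly notes that the relations \eqref{relations-lemaux} depend only on the edge conditions \eqref{extrapolationk} and \eqref{extrapolationj}, not on the corner condition \eqref{corner}. Your slice-by-slice argument and the observation that the corner value never enters are precisely the content the paper leaves to the reader.
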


The proof is a straightforward modification of the one of Lemma \ref{lem5}. The sums now run for $k\in\mathbb{N}$ and not $k\in \mathbb{Z}$, so 
that we feel free to omit the proof and leave it to the interested reader. Note in particular that for Lemma \ref{lemaux} to hold we only require the 
boundary conditions \eqref{extrapolationk} and \eqref{extrapolationj} to hold. The relations \eqref{relations-lemaux} are independent of the corner 
condition \eqref{corner}. We now turn to the proofs of the three above results that will lead us to Theorem \ref{thm2}.

\begin{proof}[Proof of Lemma \ref{lem9}]
We start with the proof of \eqref{lem9-antisym1}. Reiterating the same computations as in the half-space gives:
\[ \forall \, (j,k)\in \mathfrak{I} \, ,\quad 
2\,u_{j,k}\,v_{j,k}\,=\,-\,\alpha \, D_{1,+}\,\left\lbrace u_{j-1,k}\,u_{j,k}\right\rbrace\,-\,\beta \, D_{2,+}\,\left\lbrace u_{j,k-1}\,u_{j,k}\right\rbrace.
\]
We sum with respect to $(j,k)\in\mathfrak{I}$ and use the boundary conditions \eqref{extrapolationk} and \eqref{extrapolationj} to end up with 
\[
2\, \langle u\, ;\, v\rangle_{\ell^2(\mathfrak{I})}\,=\, \alpha \, \sum_{k\geq 0} \, (u_{0,k})^2 \,+\, \beta \, \sum_{j\geq 0} \, (u_{j,0})^2 \, ,
\]
that is to say \eqref{lem9-antisym1} (since $\alpha$ and $\beta$ are negative).

We now turn to the proof of \eqref{lem9-antisym2}. For convenience let us recall that
\begin{align}
2 \, \langle v \, ; \, w \rangle_{\ell^2(\mathfrak{I})} \, = \, & \, \alpha^3 \, {\color{ForestGreen} \langle D_{1,0} \, u \, ; \, \Delta_1 \, u \rangle_{\ell^2(\mathfrak{I})}} 
\, + \, \beta^3 \, {\color{ForestGreen} \langle D_{2,0} \, u \, ; \, \Delta_2 \, u \rangle_{\ell^2(\mathfrak{I})}} \notag \\
&\, + \, \alpha \, \beta^2 \, {\color{red}\langle D_{1,0} \, u \, ; \, \Delta_2 \, u \rangle_{\ell^2(\mathfrak{I})}} 
\, + \, \alpha^2 \, \beta \, {\color{red}\langle D_{2,0} \, u \, ; \, \Delta_1 \, u \rangle_{\ell^2(\mathfrak{I})}} \label{decompositionvw-1-bis} \\
&\, + \, 2 \, \alpha \, \beta^2 \,{\color{blue} \langle D_{2,0} \, u \, ; \, D_{1,0} \, D_{2,0} \, u \rangle_{\ell^2(\mathfrak{I})}} 
\, + \, 2 \, \alpha^2 \, \beta \, {\color{blue} \langle D_{1,0} \, u \, ; \, D_{2,0} \, D_{1,0} \, u \rangle_{\ell^2(\mathfrak{I})}} \notag \\
&\, - \, \dfrac{\alpha^2 + \beta^2}{4} \, \langle \Delta_1 \, \Delta_2 \, u \, ; \, \alpha \, D_{1,0} \, u \, + \, \beta \, D_{2,0} \, u \rangle_{\ell^2(\mathfrak{I})} \, .\notag
\end{align}
From equation \eqref{lemaux-2}, the two green terms in the first line of \eqref{decompositionvw-1-bis} vanish. We then consider the two blue terms in the 
third line. In order to do so we recall that from \eqref{expressio-ipp} we have\footnote{The crucial observation here is that $V$ is not necessarily defined on 
the whole set $\mathfrak{J}$ and also needs not satisfy the extrapolation boundary conditions.}
\begin{equation}
\label{D10}
\begin{split}
\forall \, V \in \ell^2((\{ -1 \} \cup \N) \times \N) \, ,\quad \langle V \, ; \, D_{1,0} \, V \rangle\,=\,-\,\frac{1}{2} \, \sum_{k\geq 0} \, V_{-1,k}\,V_{0,k} \\
\forall \, V \in \ell^2(\N \times (\{ -1 \} \cup \N)) \, ,\quad \langle V\,;\, D_{2,0}\,V\rangle\,=\,-\,\frac{1}{2} \, \sum_{j\geq 0}V_{j,-1}\,V_{j,0} \, .
\end{split}
\end{equation}
We use \eqref{D10} for the sequence $D_{2,0}\,u$ to obtain 
\begin{align*}
\langle D_{2,0}\,u\,;\,D_{1,0}\,D_{2,0}\,u\rangle\,=&\,-\,\dfrac{1}{8} \, \sum_{k\geq 0}(u_{-1,k+1}-u_{-1,k-1})\,(u_{0,k+1}-u_{0,k-1})\\
=&\,-\,\dfrac{1}{8} \, \sum_{k\geq 1}(\underbrace{u_{-1,k+1}-u_{-1,k-1})}_{= \, u_{0,k+1}-u_{0,k-1}}\,(u_{0,k+1}-u_{0,k-1})
-\dfrac{1}{8} \, \underbrace{(u_{-1,1}-u_{-1,-1})}_{= \, u_{0,1}-u_{0,0}}\,(u_{0,1}-u_{0,-1}),
\end{align*}
where we used the boundary condition \eqref{extrapolationk} and the corner condition \eqref{corner}. Because of the boundary condition \eqref{extrapolationk} 
we have $u_{0,0}=u_{0,-1}$ so that 
\begin{equation}\nonumber
\langle D_{2,0}\,u\,;\,D_{1,0}\,D_{2,0}\,u\rangle\,=\,-\,\frac{1}{2} \, \sum_{k\geq 0} \, (D_{2,0} \, u_{0,k})^2 \, .
\end{equation}
Proceeding similarly gives
\begin{equation}\nonumber
\langle D_{1,0}\,u\,;\,D_{2,0}\,D_{1,0}\,u\rangle\,=\,-\,\frac{1}{2} \, \sum_{j\geq 0} \, (D_{1,0} \, u_{j,0})^2 \, ,
\end{equation}
and we have thus obtained the expressions of the blue terms in the right hand side of \eqref{decompositionvw-1-bis}.

To deal with the two reds terms in the right hand side of \eqref{decompositionvw-1-bis} we first write $\Delta_2\,=\,D_{2,-}\,D_{2,+}$ (instead of 
$\Delta_2\,=\,-(D_{2,-}^*)\,D_{2,+}$ in the half-space) and we distribute $D_{2,-}$ thanks to the following discrete integration by parts formula\footnote{The 
important point here is that \eqref{ipp-simple1} is valid for sequences $V$ that are defined on $\mathfrak{I}$ and not on any ghost cell.} for $V \in \ell^2 
(\mathfrak{I};\mathbb{R})$ and either $W \in \ell^2((\{ -1 \} \cup \N)\times \N;\R)$ or $W \in \ell^2(\N \times (\{ -1 \} \cup \N);\R)$:
\begin{subequations}
\begin{align}
\label{ipp-simple1}
\langle D_{1,+}\, V \,; \, W\rangle_{\ell^2(\mathfrak{I})}\,&=\, -\sum_{k\geq 0} \, V_{0,k}\,W_{-1,k}\,-\,\langle  V\,;\, D_{1,-}\, W\rangle_{\ell^2(\mathfrak{I})} \, ,\\
\label{ipp-simple2}
\langle D_{2,+}\, V \,; \, W\rangle_{\ell^2(\mathfrak{I})}\,&=\, -\sum_{j\geq 0} \, V_{j,0}\,W_{j,-1}\,-\,\langle  V\,;\, D_{2,-}\, W\rangle_{\ell^2(\mathfrak{I})} \, .
\end{align}
\end{subequations}
We therefore have from the boundary conditions \eqref{extrapolationk} and \eqref{extrapolationj}:
\begin{align*}
\langle D_{1,0} \, u \, ; \, \Delta_2 \, u \rangle_{\ell^2(\mathfrak{I})}\,=\, -\langle D_{1,0}\,D_{2,+} \, u \, ; \, D_{2,+} \, u \rangle_{\ell^2(\mathfrak{I})} 
\, - \, \sum_{j\geq 0} \, (D_{1,0} \, u)_{j,0} \, \underbrace{D_{2,+} \, u_{j,-1}}_{=\, 0} \, = \, \dfrac{1}{2} \, \sum_{k\geq 0} \, (D_{2,+}\, u_{0,k})^2 \, ,
\end{align*}
where we used \eqref{D10} for the sequence $D_{2,+}\,u$. Similarly, we get:
\begin{align*}
\langle D_{2,0} \, u \, ; \, \Delta_1 \, u \rangle_{\ell^2(\mathfrak{I})}\,=\,\dfrac{1}{2} \, \sum_{j\geq 0} \, (D_{1,+}\, u_{j,0})^2 \, .
\end{align*}
So that using the same computations as the ones in the proof of Lemma \ref{lem4} gives
\begin{align}\nonumber
2 \, \langle v \, ; \, w \rangle_{\ell^2(\mathfrak{I})} \, = \, & \,-\dfrac{\alpha\,\beta^2}{2}\,\sum_{k\geq 0} \, D_{2,-}\,u_{0,k}\,D_{2,+}\, u_{0,k} 
\, - \, \dfrac{\alpha^2\,\beta}{2} \, \sum_{j\geq 0} \, D_{1,-}\,u_{j,0}\,D_{1,+}\, u_{j,0} \\
\label{eq-avant-derniere} \, &- \, 
\dfrac{\alpha^2 + \beta^2}{4} \, {\color{orange}\langle \Delta_1 \, \Delta_2 \, u \, ; \, \alpha \, D_{1,0} \, u \, + \, \beta \, D_{2,0} \, u \rangle_{\ell^2(\mathfrak{I})}} \, .
\end{align}

We shall now justify that the orange term in \eqref{eq-avant-derniere} vanishes. This is done essentially like in the half-space by first writing 
$\Delta_2\,=\,D_{2,-}\,D_{2,+}$ and then integrating by parts in order to use the formula \eqref{lemaux-2}. Indeed, we compute:
\begin{align*}
\langle \Delta_1\,\Delta_2\, u\, ;\, D_{1,0}\, u\rangle_{\ell^2(\mathfrak{I})}\, & \, =\, 
\langle \Delta_1\,D_{2,+}\, u\, ;\, D_{1,0}\, D_{2,+}\, u\rangle_{\ell^2(\mathfrak{I})} \\
& \, = \, \dfrac{1}{2}\,\sum_{k\geq 0} \, (D_{2,+}\,u_{0,k}\,-\,D_{2,+}\, u_{-1,k})^2 \, = \, 0 \, ,
\end{align*}
because of the boundary condition \eqref{extrapolationk} which implies that for all $k\in\mathbb{N}$, $D_{2,+}\,u_{0,k}\,=\,D_{2,+}\, u_{-1,k}$.

To complete the proof of equation \eqref{lem9-antisym2} we should modify a little the expression in the right hand side of \eqref{eq-avant-derniere}. 
We use the discrete integration by parts formulas \eqref{ipp-simple1} and \eqref{ipp-simple2} (for $j,k\geq 1$) in order to express for instance
\begin{align*}
\sum_{k\geq 0}\,D_{2,-}\,u_{0,k}\,D_{2,+}\, u_{0,k}\,= \, &\,-\,\sum_{k\geq 0} \, u_{0,k}\,D^2_{2,+}\,u_{0,k} \, - \, u_{0,-1} \, D_{2,+} \, u_{0,0} \\
= \, &\,-\,\sum_{k\geq 0} \, u_{0,k}\,\Delta_2\,u_{0,k+1} \, - \, u_{0,0} \, \Delta_2 \, u_{0,0} \, ,
\end{align*}
where we used the fact that $D_{2,-}\,u_{0,0}\,=\,0$ because of \eqref{extrapolationk}. The sum with respect to $j$ in \eqref{eq-avant-derniere} is dealt 
with similarly and the proof of Lemma \ref{lem9} is now complete.
\end{proof}

\begin{proof}[Proof of Lemma \ref{lem10}]
For convenience let us recall that we have the expansion 
\begin{align*}
\| \, v \, \|_{\ell^2(\mathfrak{I})}^2 \, - \, 2 \, \langle u \, ; \, w \rangle_{\ell^2(\mathfrak{I})} \, =& \, \, 
\alpha^2 \, \| \, D_{1,0} \, u \, \|_{\ell^2(\mathfrak{I})}^2 \, + \, \beta^2 \, \| \, D_{2,0} \, u \, \|_{\ell^2(\mathfrak{I})}^2 \\
&+ \, 2 \, \alpha \, \beta \, \langle D_{1,0 } \, u \, ; \, D_{2,0} \, u \rangle_{\ell^2(\mathfrak{I})} \, + \, 
2 \, \alpha \, \beta \, \langle u \, ; \, D_{1,0 } \, D_{2,0} \, u \rangle_{\ell^2(\mathfrak{I})} \\
&+ \, \alpha^2 \, \langle u \, ; \, \Delta_1 \, u \rangle_{\ell^2(\mathfrak{I})} \, + \, \beta^2 \, \langle u \, ; \, \Delta_2 \, u \rangle_{\ell^2(\mathfrak{I})} 
\, - \, \dfrac{\alpha^2 + \beta^2}{4} \, \langle u \, ; \, \Delta_1 \, \Delta_2 \, u \rangle_{\ell^2(\mathfrak{I})} \, .
\end{align*}
It is a mere exercise to generalize the integration by parts formula \eqref{expressio-ipp} to any couple of sequences in the quarter-space as follows:
\[
\langle D_{1,0}\,V\, ;\, W\rangle_{\ell^2(\mathfrak{I})}\,=\,-\langle V\, ;\,D_{1,0}\, W\rangle_{\ell^2(\mathfrak{I})} \,-\, 
\dfrac{1}{2}\, \sum_{k\geq 0} \, V_{-1,k}\,W_{0,k}\,-\,\dfrac{1}{2}\, \sum_{k\geq 0} \, V_{0,k}\,W_{-1,k} \, .
\]
Applying this identity to $V\,=\,u$ and $W\,=\, D_{2,0}\,u$ with $u \in \mathfrak{H}$ gives
\begin{align*}
2\,\alpha\,\beta\,\langle D_{1,0}\, u\, ;\, D_{2,0}\, u\rangle_{\ell^2(\mathfrak{I})}\,=&\,-\,2\,\alpha\,\beta\,\langle u\, ;\,D_{1,0}\, D_{2,0}\, u\rangle_{\ell^2(\mathfrak{I})} \\
&\,-\,\alpha\,\beta\, \sum_{k\geq 0} \, u_{-1,k}\,D_{2,0}u_{0,k}\,-\,\alpha \, \beta\, \sum_{k\geq 0} \, u_{0,k}\,D_{2,0}u_{-1,k} \\
:=&\,-2\,\alpha\,\beta\,\langle u\, ;\,D_{1,0}\, D_{2,0}\, u\rangle_{\ell^2(\mathfrak{I})} \, - \, \mathcal{B} \, .
\end{align*}
We make the boundary term $\mathcal{B}$ explicit by using the definition of the centered difference $D_{2,0}$ and the boundary conditions \eqref{extrapolationk} 
and \eqref{corner}:
\begin{align*}
\mathcal{B}\,&=\,-\frac{\alpha\, \beta}{2}\, \sum_{k\geq 0} \, \underbrace{u_{-1,k}}_{= \, u_{0,k}}\,(u_{0,k+1} \, - \, u_{0,k-1})\,-\,\frac{\alpha\,\beta}{2}\, 
\sum_{k\geq 0} \, u_{0,k}\,(\underbrace{u_{-1,k+1}}_{=u_{0,k+1}} \, - \, u_{-1,k-1}) \\
&=\,-\,\alpha\,\beta\sum_{k\geq 0} \, u_{0,k}\,u_{0,k+1}\,+\,\alpha\,\beta \,\sum_{k\geq 0} \, u_{0,k}\, u_{0,k-1} \,=\, \alpha\,\beta\, u_{0,0}^2 \, .
\end{align*}
Consequently we obtain
\begin{align*}
\| \, v \, \|_{\ell^2(\mathfrak{I})}^2 \, - \, 2 \, \langle u \, ; \, w \rangle_{\ell^2(\mathfrak{I})} \, =& \, \, 
\alpha^2 \textcolor{blue}{\, \| \, D_{1,0} \, u \, \|_{\ell^2(\mathfrak{I})}^2} \, + \, \beta^2 \, \textcolor{blue}{\| \, D_{2,0} \, u \, \|_{\ell^2(\mathfrak{I})}^2} 
\,+\,\alpha\,\beta\, u_{0,0}^2 \\
&+ \, \alpha^2 \, \textcolor{red}{\langle u \, ; \, \Delta_1 \, u \rangle_{\ell^2(\mathfrak{I})}} \, + \, 
\beta^2 \, \textcolor{red}{\langle u \, ; \, \Delta_2 \, u \rangle_{\ell^2(\mathfrak{I})}} 
\, - \, \dfrac{\alpha^2 + \beta^2}{4} \, \langle u \, ; \, \Delta_1 \, \Delta_2 \, u \rangle_{\ell^2(\mathfrak{I})} \, .
\end{align*}
Then, as in the half-space geometry, we use \eqref{lemaux-3} on the red terms and \eqref{lemaux-4} on the blue terms to obtain
\begin{align*}
\| \, v \, \|_{\ell^2(\mathfrak{I})}^2 \, - \, 2 \, \langle u \, ; \, w \rangle_{\ell^2(\mathfrak{I})} \, =& \, \, 
-\,\frac{\alpha^2}{4}\, \| \, \Delta_1 \, u \, \|_{\ell^2(\mathfrak{I})}^2 \, - \, \frac{\beta^2}{4} \, \| \, \Delta_2 \, u \, \|_{\ell^2(\mathfrak{I})}^2 \,+\,\alpha\,\beta\, u_{0,0}^2 \\
&\, - \, \dfrac{\alpha^2 + \beta^2}{4} \, \langle u \, ; \, \Delta_1 \, \Delta_2 \, u \rangle_{\ell^2(\mathfrak{I})} \, .
\end{align*}

To conclude we write $\Delta_2\,=\,D_{2,+}\,D_{2,-}$ and $\Delta_1\,=\,D_{1,+}\,D_{1,-}$ and we use the discrete integration by parts formulas \eqref{ipp-simple1} 
and \eqref{ipp-simple2} (together with the boundary conditions \eqref{extrapolationk}, \eqref{extrapolationj}, \eqref{corner}), which gives:
\begin{align*}
\langle u \, ; \, \Delta_1 \, \Delta_2 \, u \rangle_{\ell^2(\mathfrak{I})}\,= \, &\,-\langle D_{2,+}\, u \, ; \, \Delta_1 \, D_{2,+} \, u \rangle_{\ell^2(\mathfrak{I})} 
\,-\,\sum_{j\geq 0} \, u_{j,0} \, \underbrace{D_{2,+} \, \Delta_1\, u_{j,-1}}_{= \, 0} \\
=\, &\, \Vert D_{1,+}\,D_{2,+}\, u\Vert^2_{\ell^2(\mathfrak{I})}\,+\,\sum_{k\geq 0} \, D_{2,+}\, u_{0,k}\,\underbrace{D_{1,+} \, D_{2,+} \, u_{-1,k}}_{= \, 0} \, .
\end{align*}
Indeed the first boundary term (in our first integration by parts) vanishes because we can use either \eqref{extrapolationj} or \eqref{corner} to compute:
\begin{equation}\label{equation-annexe}
\Delta_1\,D_{2,+}\, u_{j,-1}\,=\, \Delta_1 \, u_{j,0} \, - \, \Delta_1 u_{j,-1} \, = \, \begin{cases} 0 &\text{if } j\geq 1\text{ from } \eqref{extrapolationj} \, ,\\
0&\text{if } j=0\text{ from } \eqref{corner} \, .
\end{cases}
\end{equation}
Eventually, we end up with 
\begin{align*}
\| \, v \, \|_{\ell^2(\mathfrak{I})}^2 \, - \, 2 \, \langle u \, ; \, w \rangle_{\ell^2(\mathfrak{I})} \, = &\, \, 
-\,\frac{\alpha^2}{4}\, \| \, \Delta_1 \, u \, \|_{\ell^2(\mathfrak{I})}^2 \, - \, \frac{\beta^2}{4} \, \| \, \Delta_2 \, u \, \|_{\ell^2(\mathfrak{I})}^2 \, \\
&\,- \, \dfrac{\alpha^2 + \beta^2}{4} \, \Vert D_{1,+}\,D_{2,+}u\Vert^2_{\ell^2(\mathfrak{I})}\,+\,\alpha\,\beta\, u_{0,0}^2 \, ,
\end{align*}
as desired (recall that $\alpha$ and $\beta$ are both negative).
\end{proof}

\begin{proof}[Proof of Proposition \ref{prop3}]
We follow exactly the same strategy as in the whole space $\Z^2$ and in the half-space. In particular, a crucial first ingredient is to be able to expand the norm 
$\Vert D_{1,0}\,D_{2,0}\, u\Vert^2_{\ell^2(\mathfrak{I})}$ for $u \in \mathfrak{H}$, which is based here on \eqref{lemaux-4}. Namely, we leave to the interested 
reader the verification of the following relation:
\begin{align*}
\forall \, u \in \mathfrak{H} \, ,\quad \Vert D_{1,0}\,D_{2,0}\, u\Vert^2_{\ell^2(\mathfrak{I})} \, = \, & \, \Vert D_{1,+}\,D_{2,+}\, u\Vert^2_{\ell^2(\mathfrak{I})} 
\, + \, \dfrac{1}{16} \, \Vert \Delta_1\,\Delta_2\, u\Vert^2_{\ell^2(\mathfrak{I})} \\
& \, - \, \dfrac{1}{4} \, \Vert D_{1,+}\,\Delta_2\, u\Vert^2_{\ell^2(\mathfrak{I})} \, - \, \dfrac{1}{4} \, \Vert D_{2,+}\,\Delta_1 \, u\Vert^2_{\ell^2(\mathfrak{I})} \, .
\end{align*}
Using then the same crude estimate as in the proof of Proposition \ref{prop1} for the term $\langle \Delta_1 \, u\, ;\, \Delta_2\,u \rangle_{\ell^2(\mathfrak{I})}$ and 
the above equality for the norm $\Vert D_{1,0}\,D_{2,0}\, u\Vert^2_{\ell^2(\mathfrak{I})}$, we are led to the estimate:
\begin{align*}
4 \, \| \, w \, \|_{\ell^2(\mathfrak{I})}^2 \, \le & \, \, (\alpha^2+\beta^2) \, 
\Big( \alpha^2 \, \| \, \Delta_1 \, u \, \|_{\ell^2(\mathfrak{I})}^2 \, + \, \beta^2 \, \| \, \Delta_2 \, u \, \|_{\ell^2(\mathfrak{I})}^2 
\, + \, (\alpha^2+\beta^2) \, \| \, D_{1,+} \, D_{2,+} \, u \, \|_{\ell^2(\mathfrak{I})}^2 \Big) \\
& + \, \dfrac{(\alpha^2+\beta^2)^2}{8} \, \| \, \Delta_1 \, \Delta_2 \, u \, \|_{\ell^2(\mathfrak{I})}^2 
\, - \, \dfrac{(\alpha^2+\beta^2)^2}{4} \, \Big( \| \, D_{1,+} \, \Delta_2 \, u \, \|_{\ell^2(\mathfrak{I})}^2 \, + \, \| \, D_{2,+} \, \Delta_1 \, u \, \|_{\ell^2(\mathfrak{I})}^2 \Big) \\
& + \, 4 \, \alpha \, \beta \, \langle D_{1,0} \, D_{2,0} \, u \, ; \, \alpha^2 \, \Delta_1 \, u \, + \, \beta^2 \, \Delta_2 \, u \rangle_{\ell^2(\mathfrak{I})} 
\, - \,(\alpha^2+\beta^2) \, \alpha \, \beta \, {\color{red} \langle D_{1,0} \, D_{2,0} \, u \, ; \, \Delta_1 \, \Delta_2 \, u \rangle_{\ell^2(\mathfrak{I})}} \\
& - \, \dfrac{\alpha^2+\beta^2}{2} \, {\color{blue} \langle \Delta_1 \, \Delta_2 \, u \, ; \, \alpha^2 \, \Delta_1 \, u \, + \, \beta^2 \, \Delta_2 \, u \rangle_{\ell^2(\mathfrak{I})}} \, ,
\end{align*}
which is the exact analogue of the estimate \eqref{ineg-symw2} that we obtained in the case of the half-space.

For the blue term, we first sum with respect to $k \in \N$ and use the relation $\Delta_2 =D_{2,-} \, D_{2,+}$ and the integration by parts \eqref{ipp-simple2} to get:
\begin{align*}
\langle \Delta_1 \, \Delta_2 \, u \, ;  \, \Delta_1 \, u\rangle_{\ell^2(\mathfrak{I})}\,=\,&\,\langle D_{2,-}\,D_{2,+}\,\Delta_1 \, u \, ; \, \Delta_1 \, u\rangle_{\ell^2(\mathfrak{I})} \\ 
= \, &\,- \, \Vert \Delta_1 \, D_{2,+} u\Vert^2_{\ell^2(\mathfrak{I})}\,-\,\sum_{j\geq 0} \, \underbrace{\Delta_1\,D_{2,+}\, u_{j,-1}}_{= \, 0}\,\Delta_1 u_{j,0} \, .
\end{align*}

For the red term, we use the definition of $D_{1,0}$, the decomposition $\Delta_1\,=\,D_{1,+}\,D_{1,-}\,=\,D_{1,-}\,D_{1,+}$ and the discrete integration formula 
\eqref{ipp-simple1} so that
\begin{align*}
\langle D_{1,0} \, D_{2,0} \, u \, ; \, \Delta_1 \, \Delta_2 \, u &\rangle_{\ell^2(\mathfrak{I})} \\
= \, &\,\frac{1}{2}\,\langle D_{1,+}\, D_{2,0}\,u \, ; \, D_{1,+}\, D_{1,-}\, \Delta_2\, u\rangle_{\ell^2(\mathfrak{I})}\,+\,
\dfrac{1}{2}\,\langle D_{1,-}\, D_{2,0}\,u \, ; \, D_{1,-}\, D_{1,+}\, \Delta_2\, u\rangle_{\ell^2(\mathfrak{I})}\\
= \, &\,-\,\langle \Delta_1\, D_{2,0}\,u\,;\, \Delta_2\,D_{1,0}\, u \rangle_{\ell^2(\mathfrak{I})} \\
&\, -\,\frac{1}{2}\,\sum_{k\geq 0}D_{1,+}\,D_{2,0}\,u_{-1,k}\,\underbrace{\Delta_2\,D_{1,-}\,u_{0,k}}_{=0}\,-\,\frac{1}{2}\,\sum_{k\geq 0}D_{1,-}\,D_{2,0}\,u_{0,k}\,\underbrace{\Delta_2\,D_{1,+}\,u_{-1,k}}_{=0},
\end{align*}
where, in order to justify that the boundary terms vanish, we have used the boundary condition \eqref{extrapolationk} and the corner condition \eqref{corner} and 
the same reasoning as in \eqref{equation-annexe}. We therefore end up with the analogue of \eqref{expressionw}, that is:
\begin{align}
4 \, \| \, w \, \|_{\ell^2(\mathfrak{I})}^2 \, \le & \, \, (\alpha^2+\beta^2) \, 
\Big( \alpha^2 \, \| \, \Delta_1 \, u \, \|_{\ell^2(\mathfrak{I})}^2 \, + \, \beta^2 \, \| \, \Delta_2 \, u \, \|_{\ell^2(\mathfrak{I})}^2 
\, + \, (\alpha^2+\beta^2) \, \| \, D_{1,+} \, D_{2,+} \, u \, \|_{\ell^2(\mathfrak{I})}^2 \Big) \notag \\
& + \, \dfrac{(\alpha^2+\beta^2)^2}{8} \, \| \, \Delta_1 \, \Delta_2 \, u \, \|_{\ell^2(\mathfrak{I})}^2 
\, + \, 4 \, \alpha \, \beta \, \langle D_{1,0} \, D_{2,0} \, u \, ; \, \alpha^2 \, \Delta_1 \, u \, + \, \beta^2 \, \Delta_2 \, u \rangle_{\ell^2(\mathfrak{I})} \label{prop44-expression} \\
& + \, {\color{blue} \dfrac{(\alpha^2+\beta^2)}{4} \, (\alpha^2-\beta^2) \, \Big( 
\| \, D_{2,+} \, \Delta_1 \, u \, \|_{\ell^2(\mathfrak{I})}^2 \, - \, \| \, D_{1,+} \, \Delta_2 \, u \, \|_{\ell^2(\mathfrak{I})}^2 \Big)} \nonumber\\
& + \, {\color{blue} (\alpha^2+\beta^2) \, \alpha \, \beta \, \langle D_{1,0} \, \Delta_2 \, u \, ; \, D_{2,0} \, \Delta_1 \, u \rangle_{\ell^2(\mathfrak{I})}} \, .\notag
\end{align}
Once again, to estimate the blue term, we can follow exactly the same procedure as in the whole space $\Z^2$ (see Step 1 of Proposition \ref{prop1}) or in the 
half-space, except that we sum on $\mathfrak{I}=\N^2$ instead of $\mathbb{Z}^2$ and we use the relation \eqref{lemaux-4} instead of \eqref{formuleslem2}. 
The estimate of the above blue term in \eqref{prop44-expression} gives the (almost final) estimate:
\begin{align}
4 \, \| \, w \, \|_{\ell^2(\mathfrak{I})}^2 \, \le& \, \, (\alpha^2+\beta^2) \, 
\Big( \alpha^2 \, \| \, \Delta_1 \, u \, \|_{\ell^2(\mathfrak{I})}^2 \, + \, \beta^2 \, \| \, \Delta_2 \, u \, \|_{\ell^2(\mathfrak{I})}^2 
\, + \, (\alpha^2+\beta^2) \, \| \, D_{1,+} \, D_{2,+} \, u \, \|_{\ell^2(\mathfrak{I})}^2 \Big) \label{inegprop3-3} \\
& + \, \alpha^2 \, B_1 \, + \, \beta^2 \, B_2 \, .\notag
\end{align}
where we introduced, like for the whole space and half-space problems, the quantities:
\begin{subequations}
\label{defB-corner}
\begin{align}
B_1 \, &:= \, \dfrac{\alpha^2+\beta^2}{2} \, \| \, D_{2,+} \, \Delta_1 \, u \, \|_{\ell^2(\mathfrak{I})}^2 \, + \, 
4 \, \alpha \, \beta \, \langle D_{1,0} \, D_{2,0} \, u \, ; \, \Delta_1 \, u \rangle_{\ell^2(\mathfrak{I})} \, ,\label{defB1-corner} \\
B_2 \, &:= \, \dfrac{\alpha^2+\beta^2}{2} \, \| \, D_{1,+} \, \Delta_2 \, u \, \|_{\ell^2(\mathfrak{I})}^2 \, + \, 
4 \, \alpha \, \beta \, \langle D_{1,0} \, D_{2,0} \, u \, ; \, \Delta_2 \, u \rangle_{\ell^2(\mathfrak{I})} \, .\label{defB2-corner}
\end{align}
\end{subequations}
To conclude we should show the estimates
\begin{subequations}\label{est-finale-prop3}
\begin{align}
\label{est-finale-prop31}
B_1\, \leq \, & \, (\,\alpha^2\,+\,\beta^2\,)\left( \Vert \,\Delta_1\, u \,\Vert^2_{\ell^2(\mathfrak{I})}\,+ \, 
\Vert \, D_{1,+}\,D_{2,+}\, u\,\Vert^2_{\ell^2(\mathfrak{I})}\,\right)\,-\,\frac{\alpha^2\,+\,\beta^2}{2}\,\sum_{j\geq 0}\,(\Delta_1\, u_{j,0})^2 \, ,\\ 
\label{est-finale-prop32}
B_2\, \leq \, &\, (\,\alpha^2\,+\,\beta^2\,)\left( \Vert \,\Delta_2\, u \,\Vert^2_{\ell^2(\mathfrak{I})}\,+\, 
\Vert \, D_{1,+}\,D_{2,+}\, u\,\Vert^2_{\ell^2(\mathfrak{I})}\,\right)\,-\,\frac{\alpha^2\,+\,\beta^2}{2}\,\sum_{k\geq 0}\,(\Delta_2\, u_{0,k})^2 \, .
\end{align}
\end{subequations}
The latter inequalities are direct consequences of Lemma \ref{lem8} (or rather its extension to the quarter-space). Indeed, reiterating the proof of Lemma 
\ref{lem8} directly gives\footnote{Here, unlike in the half-space, the two space coordinates play the same role.} \eqref{est-finale-prop32}. Let us just list the 
main modifications. Because we now sum with respect to $k\in\mathbb{N}$ instead of $k \in \Z$, we have to justify two points:
\begin{itemize}
\item The fact that after using the relations $D_{1,0}\,=\,A_{1,-}\,D_{1,+}$ and $D_{2,0}\,=\,A_{2,-}\,D_{2,+}$, the scalar product in \eqref{defB2-corner} can 
be written in the form $\langle A_{1,-}\,W\,;\,\widetilde{W}\rangle$,  with $W_{-1,k}=0$ for all $k\in \mathbb{N}$. This is still true because of the boundary 
condition \eqref{extrapolationk} and because the relevant sequence $W$ here is $W:=\, A_{2,-}\,D_{1,+}\, D_{2,+}\, u$, so that it does not involve the corner 
value of $u$.
\item In the derivation of the relation that is analogous to \eqref{equation_B2}, a boundary term appears because we now only sum with respect to $k \in \N$. 
However, this boundary term turns out to be zero. Indeed, for the sequence $W\,:=\, D_{1,+}\,D_{2,+}\, u$, we compute:
\begin{equation}\nonumber
\|\, A_{2,-} W \, \|_{\ell^2(\mathfrak{I})}^2 \, + \, \dfrac{1}{4} \, \|\, D_{2,-} W \, \|_{\ell^2(\mathfrak{I})}^2 
\, = \, \frac{1}{2} \, \sum_{j,k\geq 0} \, W_{j,k}^2 \, + \, \frac{1}{2} \, \sum_{j,k\geq 0} \, W_{j,k-1}^2\, =\,\Vert\, W\Vert_{\ell^2(\mathfrak{I})}^2 \, ,
\end{equation}
because $W_{j,-1}\,=\,0$ for all $j\geq 0$ (from the boundary condition \eqref{extrapolationj}).
\end{itemize}
We can thus adapt and reproduce (almost word for word) the proof of Lemma \ref{lem8} to get \eqref{est-finale-prop32} and \eqref{est-finale-prop31} is obtained 
now in a similar way by exchanging the roles of $j$ and $k$. We then use \eqref{est-finale-prop3} in \eqref{inegprop3-3} and obtain the claim of Proposition \ref{prop3}.
\end{proof}

To complete the proof of Theorem \ref{thm2}, we now combine the results of Lemmas \ref{lem9} and \ref{lem10} with Proposition \ref{prop3}. We recall the energy 
balance:
\begin{align*}
\Vert u^{n+1}\Vert^2_{\ell^2(\mathfrak{I})}\,-\,\Vert u^n\Vert^2_{\ell^2(\mathfrak{I})}\,= \, &\,\Vert w^n\Vert^2_{\ell^2(\mathfrak{I})}\,+\,\Vert v^n\Vert^2_{\ell^2(\mathfrak{I})} 
\,-\,2\,\langle u^n\,;\, w^n\rangle_{\ell^2(\mathfrak{I})}\,-\,\lambda\, \vert \, a\,\vert\, \mu \,\vert \,b\, \vert \, (u_{0,0}^n)^2 \, \\
&+\,2\,\langle u^n\,;\, v^n\rangle_{\ell^2(\mathfrak{I})}\,-2\,\langle v^n\,;\, w^n\rangle_{\ell^2(\mathfrak{I})}\,+\,\lambda\, \vert \, a\,\vert\, \mu \,\vert \,b\, \vert \, 
(u_{0,0}^n)^2 \, .
\end{align*}
Thanks to Lemma \ref{lem10} and Proposition \ref{prop3}, we obtain (quite like for the half-space problem except for the new corner contribution which we have to 
subtract here):
\begin{align}
\nonumber
\Vert w^n\Vert^2_{\ell^2(\mathfrak{I})}\,&+\,\Vert v^n\Vert^2_{\ell^2(\mathfrak{I})}\,-\,2\,\langle u^n\,;\, w^n\rangle_{\ell^2(\mathfrak{I})} 
\,-\,\lambda\, \vert \, a\,\vert\, \mu \,\vert \,b\, \vert \, (u_{0,0}^n)^2 \\
\nonumber \leq \, & \,\frac{(\lambda\, a )^2}{2}\,\Big( (\lambda\, a)^2\,+\,(\mu\, b)^2\,-\,\frac{1}{2}\Big)\,\Vert \Delta_1\, u^n\Vert_{\ell^2(\mathfrak{I})} 
\,+\,\frac{(\mu\, b )^2}{2}\,\Big( (\lambda\, a)^2\,+\,(\mu\, b)^2\,-\,\frac{1}{2}\Big)\,\Vert \Delta_2\, u^n\Vert_{\ell^2(\mathfrak{I})} \\
\label{eq-fin-corner} 
&\,+\,\frac{(\lambda\, a )^2+(\mu\, b)^2}{2}\,\Big( (\lambda\, a)^2\,+\,(\mu\, b)^2\,-\,\frac{1}{2}\Big)\,\Vert D_{1,+}\,D_{2,+}\, u^n\Vert_{\ell^2(\mathfrak{I})} \\
\nonumber 
&\,-\,\frac{(\mu\, b)^2}{8}\, ((\lambda\, a)^2\,+\,(\mu\, b)^2)\,\sum_{k\geq 0}(\Delta_2\, u^n_{0,k})^2\,-\,\frac{(\lambda\, a)^2}{8}\, ((\lambda\, a)^2\,+\,(\mu\, b)^2)\,\sum_{j\geq 0}(\Delta_1\, u^n_{j,0})^2 \, ,
\end{align} 
where the three first terms on the right hand side of \eqref{eq-fin-corner} are negative under the CFL condition \eqref{CFLlaxwendroff}. Assuming from now on 
that \eqref{CFLlaxwendroff} holds, we thus get:
\begin{align}
\nonumber
\Vert u^{n+1}\Vert^2_{\ell^2(\mathfrak{I})}\,-\, & \, \Vert u^n\Vert^2_{\ell^2(\mathfrak{I})}\,\le 
2\,\langle u^n\,;\, v^n\rangle_{\ell^2(\mathfrak{I})}\,-2\,\langle v^n\,;\, w^n\rangle_{\ell^2(\mathfrak{I})}\,+\,\lambda\, \vert \, a\,\vert\, \mu \,\vert \,b\, \vert \, (u_{0,0}^n)^2 \\
&\,-\,\frac{(\mu\, b)^2}{8}\, ((\lambda\, a)^2\,+\,(\mu\, b)^2)\,\sum_{k\geq 0}(\Delta_2\, u^n_{0,k})^2\,-\,
\frac{(\lambda\, a)^2}{8}\, ((\lambda\, a)^2\,+\,(\mu\, b)^2)\,\sum_{j\geq 0}(\Delta_1\, u^n_{j,0})^2 \, ,\label{eq-fin-corner'} 
\end{align}

We now estimate the remaining terms in the energy balance. We begin with looking at the first term on the right hand side of \eqref{lem9-antisym2}. 
Using Young's inequality, we obtain:
\begin{multline*}
\frac{\lambda\, \vert \, a\, \vert \,(\mu\, b)^2}{2} \, \left| \, \sum_{k\geq 0}\,u_{0,k}^n\,\Delta_{2}\, u_{0,k+1}^n \, \right| \\
\leq \, \frac{(\lambda\, a)^2 \, (\mu\, b)^2}{\sqrt{2} \, ((\lambda\, a)^2 + (\mu\, b)^2)} \, \sum_{k\geq 0}\, (u_{0,k}^n)^2 \, + \, 
\frac{(\mu\, b)^2 \, ((\lambda\, a)^2 + (\mu\, b)^2)}{8 \, \sqrt{2}} \, \sum_{k\geq 1}\, (\Delta_2 \, u_{0,k}^n)^2 \, .
\end{multline*}
For the first term on the right hand side, we use the inequality:
$$
\frac{\lambda\, |\, a \, | \, \mu \, | \, b \, |}{(\lambda\, a)^2 + (\mu\, b)^2} \, \le \, \dfrac{1}{2} \, ,
$$
and $\mu \, | \, b \, | \le 1/\sqrt{2}$, which follows from \eqref{CFLlaxwendroff}. We thus end up with:
\begin{equation*}
\frac{\lambda\, \vert \, a\, \vert \,(\mu\, b)^2}{2} \, \left| \, \sum_{k\geq 0}\,u_{0,k}^n\,\Delta_{2}\, u_{0,k+1}^n \, \right| \, \leq \, 
\frac{\lambda\, |\, a \, |}{4} \, \sum_{k\geq 0}\, (u_{0,k}^n)^2 \, + \, 
\frac{(\mu\, b)^2 \, ((\lambda\, a)^2 + (\mu\, b)^2)}{8 \, \sqrt{2}} \, \sum_{k\geq 1}\, (\Delta_2 \, u_{0,k}^n)^2 \, .
\end{equation*}
In an entirely similar way, we obtain:
\begin{equation*}
\frac{\lambda\, \vert \, a\, \vert \,(\mu\, b)^2}{2} \, \left| \, u_{0,0}^n \, \Delta_{2}\, u_{0,0}^n \, \right| \, \leq \, 
\frac{\lambda\, |\, a \, |}{4} \, (u_{0,0}^n)^2 \, + \, \frac{(\mu\, b)^2 \, ((\lambda\, a)^2 + (\mu\, b)^2)}{8 \, \sqrt{2}} \, (\Delta_2 \, u_{0,0}^n)^2 \, .
\end{equation*}
Arguing similarly for the other expressions on the right hand side of \eqref{lem9-antisym2}, we thus obtain the bound:
\begin{multline}
\label{estimyoungfinal}
\left| \, 2\,\langle v^n\,;\, w^n\rangle_{\ell^2(\mathfrak{I})} \, \right| \, \leq \, 
\frac{\lambda\, |\, a \, |}{2} \, \sum_{k\geq 0}\, (u_{0,k}^n)^2 \, + \, \frac{\mu\, |\, b \, |}{2} \, \sum_{j\geq 0}\, (u_{j,0}^n)^2 \\
+ \, \frac{(\mu\, b)^2 \, ((\lambda\, a)^2 + (\mu\, b)^2)}{8 \, \sqrt{2}} \, \sum_{k\geq 0}\, (\Delta_2 \, u_{0,k}^n)^2 \, + \, 
\frac{(\lambda \, a)^2 \, ((\lambda\, a)^2 + (\mu\, b)^2)}{8 \, \sqrt{2}} \, \sum_{j\geq 0}\, (\Delta_1 \, u_{j,0}^n)^2 \, .
\end{multline}
Using \eqref{lem9-antisym1} and \eqref{estimyoungfinal} in \eqref{eq-fin-corner'} as well as the inequality $1-1/\sqrt{2} \ge 1/4$, we obtain:
\begin{align*}
\Vert u^{n+1}\Vert^2_{\ell^2(\mathfrak{I})}\,-\, & \, \Vert u^n\Vert^2_{\ell^2(\mathfrak{I})}\,\le \, 
- \, \frac{\lambda\, |\, a \, |}{2} \, \sum_{k\geq 0}\, (u_{0,k}^n)^2 \, - \, \frac{\mu\, |\, b \, |}{2} \, \sum_{j\geq 0}\, (u_{j,0}^n)^2
\,+\,\lambda\, \vert \, a\,\vert\, \mu \,\vert \,b\, \vert \, (u_{0,0}^n)^2 \\
&\,-\,\frac{(\mu\, b)^2}{32}\, ((\lambda\, a)^2\,+\,(\mu\, b)^2)\,\sum_{k\geq 0}(\Delta_2\, u^n_{0,k})^2\,-\,\frac{(\lambda\, a)^2}{32}\, ((\lambda\, a)^2\,+\,(\mu\, b)^2)\,\sum_{j\geq 0}(\Delta_1\, u^n_{j,0})^2 \, ,
\end{align*} 
The remaining argument is to observe that \eqref{CFLlaxwendroff} implies that we have:
$$
\lambda\, \vert \, a\,\vert\, \mu \,\vert \,b\, \vert \, (u_{0,0}^n)^2 \, \le \, \dfrac{\lambda\, \vert \, a\,\vert \,+\, \mu \,\vert \,b\, \vert}{2 \, \sqrt{2}} \, (u_{0,0}^n)^2 \, .
$$
Using again the inequality $1-1/\sqrt{2} \ge 1/4$, we end up with \eqref{estimthm2}.

\section{Numerical simulations and perspectives}
\label{part-num}

In order to illustrate our stability result of Theorem \ref{thm2}, we implement the numerical scheme \eqref{LW} in the rectangle $[0,3] \times [0,5]$ with 
$500$ points in the first ($x$) direction and $800$ points in the second ($y$) direction. We choose $a=-2$ and $b=-4$. The time step $\Delta t$ is fixed 
in such a way that we have:
$$
(\lambda \, a)^2 \, +\, (\mu \, b)^2 \, = \, \dfrac{1}{4} \, .
$$
On the incoming sides of the rectangle, we implement the homogeneous Dirichlet boundary condition, and we use \eqref{extrapolationk}, \eqref{extrapolationj}, 
\eqref{corner} on the outgoing boundaries of the rectangle. The initial condition is the Gaussian function:
$$
(x,y) \, \longmapsto \, \exp \left( -\, 10 \, \left( x-\dfrac{3}{2} \right)^2 \, - \, 10 \, \left( y-\dfrac{5}{2} \right)^2 \right) \, .
$$
The $\ell^2$ norm of the numerical solution is depicted in Figure \ref{fig:normel2-1}, and we verify numerically that it is decreasing (this is not exactly the situation 
predicted in Theorem \ref{thm2} because of the two additional boundaries of the rectangle but the Dirichlet boundary conditions make the $\ell^2$ norm decrease 
with respect to the quarter-space so it is likely that the stability property of Theorem \ref{thm2} is not affected by incorporating homogeneous Dirichlet conditions 
on the incoming sides.

\begin{figure}[htbp]
\centering
\begin{tabular}{c}
\includegraphics[height=.27\textheight]{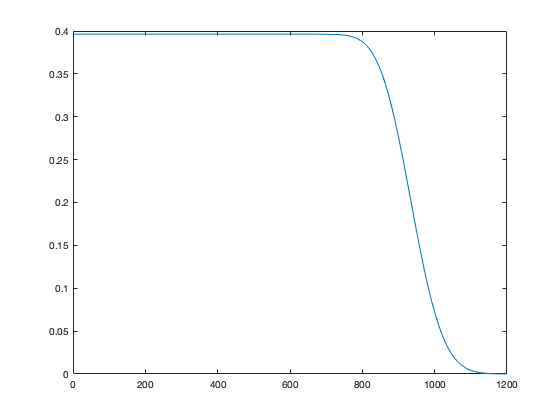} \\
\end{tabular}
\caption{Evolution of the $\ell^2$ norm of the numerical solution with the extrapolation conditions \eqref{extrapolationk}, \eqref{extrapolationj}, \eqref{corner} at 
outgoing boundaries. The norm depends monotonically on the time iteration.}
\label{fig:normel2-1}
\end{figure}

We now run the exact same computation except that we implement on the outgoing corner the condition $u_{-1,-1}^n=290 \, u_{0,0}^n$. The exact value of the 
coefficient is meaningless, it is simply tuned in order to make the illustration visible. The time evolution of the $\ell^2$ norm is depicted in Figure \ref{fig:normel2-2}, 
and we clearly see that the monotonicity property of the $\ell^2$ norm does not hold anylonger. An instability mechanism is taking place, which makes the numerical 
solution become larger and larger in the vicinity of the outgoing corner. Once this mechanism has been ignited, the growth of the $\ell^2$ norm becomes exponential.

\begin{figure}[htbp]
\centering
\begin{tabular}{c}
\includegraphics[height=.27\textheight]{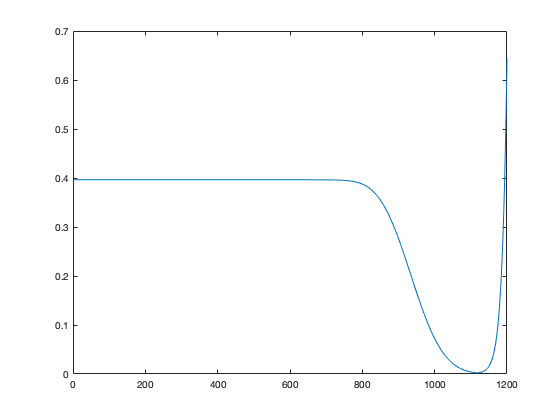} \\
\end{tabular}
\caption{Evolution of the $\ell^2$ norm of the numerical solution with the extrapolation conditions \eqref{extrapolationk}, \eqref{extrapolationj} and $u_{-1,-1}^n=290 \, 
u_{0,0}^n$ at outgoing boundaries. The norm does not depend monotonically on the time iteration anylonger.}
\label{fig:normel2-2}
\end{figure}

Figures \ref{fig:normel2-1} and \ref{fig:normel2-2} are included in order to illustrate that imposing ``good'' corner conditions is crucial in order to maintain stability 
for outgoing transport equations. In the future, we intend to explore the construction of higher order extrapolation conditions as well as extending the theory of 
\cite{Osher-1,Osher-2} to the fully discrete setting in order to be able to analyze the stability of ``general'' boundary conditions in the quarter-space for \eqref{LW}.

\bibliographystyle{alpha}
\bibliography{BC}
\end{document}